\title[Isometric realization of cross caps]{%
  Isometric realization of
  cross caps as formal power series 
  and its applications
}
\date{January 23, 2016.}
\numberwithin{equation}{section}
\theoremstyle{plain}
 \newtheorem{theorem}{Theorem}[section]
 \newtheorem*{theorem*}{Theorem}
 \newtheorem*{lemma*}{Lemma}
 \newtheorem{proposition}[theorem]{Proposition}
 \newtheorem{fact}[theorem]{Fact}
 \newtheorem*{fact*}{Fact}
 \newtheorem{lemma}[theorem]{Lemma}
 \newtheorem{corollary}[theorem]{Corollary}
 \newtheorem*{problem}{Problem}
 \newtheorem*{question}{Question}
\theoremstyle{remark}
 \newtheorem{definition}[theorem]{Definition}
 \newtheorem{remark}[theorem]{Remark}
 \newtheorem*{remark*}{Remark}
 \newtheorem*{ack}{Acknowledgements}
 \newtheorem{example}[theorem]{Example}
\numberwithin{equation}{section}
\renewcommand{\theenumi}{{\rm(\arabic{enumi})}}
\renewcommand{\labelenumi}{\theenumi}
\newcommand{\vect}[1]{\boldsymbol{#1}}
\newcommand{\Z}{\boldsymbol{Z}}
\newcommand{\R}{\boldsymbol{R}}
\newcommand{\A}{\mathcal{A}}
\newcommand{\B}{\mathcal{B}}
\newcommand{\E}{\mathcal{E}}
\newcommand{\F}{\mathcal{F}}
\newcommand{\G}{\mathcal{G}}
\newcommand{\Xx}{\mathcal{X}}
\newcommand{\Yy}{\mathcal{Y}}
\newcommand{\Zz}{\mathcal{Z}}
\newcommand{\W}{\mathcal{W}}
\renewcommand{\O}{\mathcal{O}}
\newcommand{\Hess}{\operatorname{Hess}}
\newcommand{\SO}{\operatorname{SO}}
\newcommand{\pmt}[1]{{\begin{pmatrix} #1  \end{pmatrix}}}
\newcommand{\innner}[2]{\langle{#1}|{#2}\rangle}
\renewcommand{\phi}{\varphi}
\renewcommand{\epsilon}{\varepsilon}
\thanks{%
  The first author was partially supported by the
  Grant-in-Aid for Challenging Exploratry Research, No.\ 
  26610016 of the Japan Society for the Promotions of Science (JSPS),
  the second author was partially supported by the Grant-Aid for JSPS
  fellows,
  No.\ 14J00101,
  the third author was partially 
  supported by the Grant-in-Aid for 
  Scientific Research (A) No.\ 262457005, 
  and
  the fourth author by (C) No.\ 26400087 from 
  JSPS
}%
\author{A.~Honda}
\address[Atsufumi Honda]{
  National Institute of Technology, 
  Miyakonojo College,
  473-1,
  Yoshiocho, Miyakonojo, 
  Miyazaki 885-8567,
  Japan
}
\email{atsufumi@cc.miyakonojo-nct.ac.jp}
\author{K.~Naokawa}
\address[Kosuke Naokawa]{%
   Department of Mathematics, Faculty of Science,
   Kobe University,
   Rokko, Kobe 657-8501, Japan
}
\email{naokawa@port.kobe-u.ac.jp}
\author{M.~Umehara}
\address[Masaaki Umehara]{%
   Department of Mathematical and Computing Sciences,
   Tokyo Institute of Technology
   2-12-1-W8-34, O-okayama, Meguro-ku,
   Tokyo 152-8552, Japan
}
\email{umehara@is.titech.ac.jp}
\author{K.~Yamada}
\address[Kotaro Yamada]{%
   Department of Mathematics,
   Tokyo Institute of Technology,
   O-okayama, Meguro, Tokyo 152-8551,
   Japan
}
\email{kotaro@math.titech.ac.jp}
\subjclass[2000]{Primary 57R45; Secondary 53A05.}
\begin{document}
\begin{abstract}
 Two cross caps in Euclidean $3$-space
 are said to be \emph{formally isometric} if 
 their Taylor expansions of
 the first fundamental forms coincide
 by taking a suitable local coordinate system.
 For a given $C^\infty$ cross cap $f$,
 we give a method to find
 all cross caps which are 
 formally isometric to $f$.
 As an application, 
 we give a countable family of intrinsic invariants of cross caps
 which recognizes formal isometry classes completely.
\end{abstract}
\maketitle

\section*{Introduction}
Singular points of a positive semi-definite 
metric $d\sigma^2$ are the points where
the metric is not positive definite.
In the authors' previous work
\cite{HHNSUY} with Hasegawa and Saji, a class of 
positive semi-definite metrics on $2$-manifolds
called \lq Whitney metrics\rq\ 
was given.
Singularities of Whitney metrics
are isolated and the pull-back metrics
of cross caps in Euclidean $3$-space $\R^3$ are
typical examples of Whitney metrics. 

In \cite{HHNUY} with Hasegawa, the authors 
gave three intrinsic invariants
$\alpha_{2,0}$, $\alpha_{1,1}$ and $\alpha_{0,2}$ for cross caps.
After that they were generalized in \cite{HHNSUY}
as invariants of Whitney metrics.
In this paper, we construct a series of invariants
$\{\alpha_{i,j}\}_{i+j\ge 2}$ as an extension of
$\alpha_{2,0}, \alpha_{1,1}$ and $\alpha_{0,2}$.
This series of invariants
can distinguish isometric classes of real analytic 
Whitney metric completely (see Section~\ref{sec:5}),
and are related to the following problem:
\begin{problem}
 Can each  singular point of a Whitney metric locally 
 be isometrically realized as a cross cap in $\R^3$?
\end{problem}
The authors expect the answer will be
affirmative, under the assumption that
 the metric is real analytic.
In fact, for real analytic cuspidal 
edges and swallowtails,
the corresponding problems are solved affirmatively
by Kossowski \cite{K} (see also \cite{HHNSUY}).
Moreover, the moduli of isometric deformations of a 
given generic real analytic germ of cuspidal edge singularity was
completely determined in \cite{NUY}.
In this paper, 
we construct all isometric realizations of 
a given Whitney metric germ at their singularities 
as formal power series
solutions of the problem.
The above family of invariants
$\{\alpha_{i,j}\}_{i+j\ge 2}$
corresponds to the coefficients of
the Taylor expansion of
a certain realization of
the Whitney metric associated to a given cross cap 
singular point. So we can give an explicit algorithm
to compute the invariants (cf.\ Section \ref{sec:5}).
Although it seems difficult to show the
convergence of the power series,
we can
approximate it by $C^\infty$ maps
by applying Borel's theorem (cf.\ \cite[Lemma 2.5 in Chapter IV]{GG}),
and get our main result 
(cf.\ Theorem \ref{thm:main}).

\section{Preliminaries and main results}\label{sec:prelim}
\subsection{Characteristic functions of cross caps}
We recall fundamental properties of
cross caps (cf.\ \cite{W,FH,HHNUY}).
Let $f:U\to \R^3$ be a $C^\infty$ map,
where $U$ is a domain in $\R^2$.
A point $p$ $(\in U)$ is called a \emph{singular point}
if $f$ is not an immersion at $p$.
Consider such a map given by
\begin{equation}\label{eq:01}
   f_{0}(u,v)=(u,uv,v^2),
\end{equation}
which has an isolated singular point at the origin $(0,0)$
and is called the \emph{standard cross cap}.
A singular point $p$ of the map $f:U\to \R^3$
is called a \emph{cross cap} or a \emph{Whitney umbrella}
if there exist a local diffeomorphism $\phi$ on $\R^2$
and a local diffeomorphism $\Phi$ on $\R^3$ satisfying
$\Phi\circ f=f_{0}\circ \phi$ such that
$\phi(p)=(0,0)$ and $\Phi(f(p))=(0,0,0)$.

Let $f : (U; u, v) \to \R^3$ be a $C^\infty$ map 
such that $ (u, v) = (0, 0)$ is 
a cross cap singularity and $f_v(0, 0) = 0$.
Since cross cap singularities are co-rank one,
$f_u(0,0)\ne 0$.
We call the line
\[
   \{ f (0, 0) + t f_u(0, 0) ; t \in \R\}
\]
the \emph{tangential line} at the cross cap.
The plane passing through $f (0, 0)$ spanned
by $f_u(0, 0)$ and $f_{vv}(0, 0)$ is called the 
\emph{principal plane}. 
The principal plane is determined independently
of the choice of the local coordinate 
system $(u,v)$ satisfying $f_v(0, 0) = 0$.
By definition, the principal plane contains
the tangential line.

On the other hand, the plane passing through 
$f (0, 0)$ perpendicular to the tangential 
line is called the \emph{normal plane}. 
The unit normal vector $\nu(u, v)$ near 
the cross cap at $(u, v) = (0, 0)$ can
be extended as a $C^\infty$ function of $(r, \theta)$ 
by setting $u = r \cos \theta$ and $v = r \sin \theta$, 
and the limiting normal vector
\[
   \nu(\theta) := \lim_{r \to 0}
   \nu(r \cos \theta, r \sin \theta) \in
   T_{f (0,0)}\R^3
\]
lies in the normal plane. 

We have the following normal form of $f$ at a cross cap 
singularity:
\begin{fact}\label{fact:W}
 Let $f:(U;u,v)\to \R^3$ be a
 germ of a cross cap singularity at $(u,v)=(0,0)$.
 Then there exist an orientation
 preserving isometry $T$ and a
 local diffeomorphism
 $(x,y)\mapsto (u(x,y),v(x,y))$ such that
 $f(x,y):=f(u(x,y),v(x,y))$
 satisfies
 \begin{equation}\label{eq:cross}
    T\circ f(x,y)=
        \left(
	   x,\,\,
	   xy+b(y),\,\,
	   z(x,y)\right),
 \end{equation}
 where  $b(y)$ and $z(x,y)$ are smooth functions
 satisfying 
 \begin{equation}\label{eq:cross2}
  b(0)=b'(0)=b''(0)=0,\quad
   z(0,0)=z_x(0,0)=z_y(0,0)=0,\quad z_{yy}(0,0)>0.
 \end{equation}
 Moreover, if we assume
 \begin{equation}\label{eq:J}
  \det\pmt{x_u & x_v \\ y_u & y_v}>0
 \end{equation}
 at $(u,v)=(0,0)$, then the function germs 
 $x=x(u,v), y=y(u,v)$, 
 $z=z(x,y)$ and $b=b(y)$ 
 are uniquely determined.
\end{fact}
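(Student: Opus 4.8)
The strategy is to normalize $f$ in three stages: first adjust the $1$-jet by elementary changes, then fix the ambient frame with the isometry $T$, and finally determine the coordinate $y$ by solving a degenerate first order partial differential equation.

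After a translation we may assume $f(0,0)=0$, and since $df_{(0,0)}$ has rank one, after a linear change of source coordinates we may assume $f_v(0,0)=0$ and $f_u(0,0)\ne 0$; recall that the cross cap (Whitney) condition says that $f_u(0,0)$, $f_{uv}(0,0)$, $f_{vv}(0,0)$ are linearly independent. I would then choose the orientation preserving linear isometry $T$ as follows: rotate so that $f_u(0,0)$ becomes a positive multiple of $e_1$; use the remaining $\SO(2)$ on the $e_2e_3$-plane to move $f_{vv}(0,0)$ into the $e_1e_3$-plane (the principal plane); and, if needed, apply the orientation preserving reflection $\op{diag}(1,-1,-1)$ so that the $e_3$-component of $f_{vv}(0,0)$ becomes positive. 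By the Whitney condition this component is then strictly positive and the $e_2$-component of $f_{uv}(0,0)$ is nonzero. Writing $T\circ f=(f^1,f^2,f^3)$, the map $(u,v)\mapsto(f^1(u,v),v)$ is a local diffeomorphism, and in the coordinates $(x,v):=(f^1,v)$ we get $T\circ f=(x,g(x,v),h(x,v))$, where $g,h$ together with their first derivatives vanish at the origin and, by the above normalizations, $g_{vv}(0,0)=0$, $\mu:=g_{xv}(0,0)\ne 0$, and $h_{vv}(0,0)>0$. It remains to find a coordinate $y=y(x,v)$ with $y_v(0,0)\ne 0$, without changing $x$, in which $g$ becomes $xy+b(y)$.

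Putting $G(x,y):=g(x,v(x,y))$, the identity $g=xy+b(y)$ is equivalent to $\partial G/\partial x=y$, which in $(x,v)$-coordinates reads
\begin{equation*}
  g_x\,y_v-g_v\,y_x=y\,y_v .
\end{equation*}
This is a quasilinear first order equation whose characteristic vector field on $(x,v,y)$-space is $(-g_v,\ g_x-y,\ 0)$; since $g_x(0,0)=g_v(0,0)=0$ it vanishes at the origin, so the equation degenerates exactly at the point of interest and one cannot apply the method of characteristics off a transversal. This is the main obstacle. I would study the linearization of the characteristic field at its zero: using $g_{vv}(0,0)=0$ one computes its eigenvalues to be $0,\mu,-\mu$, and the graph of the sought $y$ must be tangent to the invariant plane spanned by the $0$- and $(-\mu)$-eigenspaces. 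The existence of a corresponding $C^\infty$ invariant manifold — and hence of the germ $y$ — should follow from an invariant-manifold argument exploiting that $y$ is an exact first integral ($\dot y=0$), which also yields its uniqueness; alternatively, since this normal form is essentially classical, one may simply invoke \cite{W,FH,HHNUY}. Granting $y$, set $b(y):=g(0,v(0,y))$ and $z(x,y):=h(x,v(x,y))$. Then $b(0)=b'(0)=b''(0)=0$ because $g(0,0)=g_v(0,0)=g_{vv}(0,0)=0$, $z(0,0)=z_x(0,0)=z_y(0,0)=0$ because the first derivatives of $h$ vanish, and $z_{yy}(0,0)=h_{vv}(0,0)\,v_y(0,0)^2>0$ because $h_{vv}(0,0)>0$. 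This proves existence.

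For uniqueness under \eqref{eq:J}: an orientation preserving isometry $T$ making $T\circ f$ of the required form must send the tangential line to the $e_1$-axis, the principal plane to the $e_1e_3$-plane, and — by $z_{yy}(0,0)>0$ combined with the orientation of $T$ — the part of $f_{vv}(0,0)$ perpendicular to the tangential line to the positive $e_3$-direction; this determines $T$ up to the rotation $\op{diag}(-1,-1,1)$ by $\pi$ about the $e_3$-axis. Composing $T$ with $\op{diag}(-1,-1,1)$ replaces the coordinate $x$ by $-x$, i.e.\ composes the diffeomorphism $(x,y)\mapsto(u,v)$ with the reflection $x\mapsto-x$, which reverses the sign of the Jacobian in \eqref{eq:J}; hence \eqref{eq:J} singles out $T$. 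With $T$ fixed, $x=(T\circ f)^1$ is forced, $y$ is the unique germ with $y_v(0,0)\ne 0$ solving the displayed equation, and $b$, $z$ are then given by the formulas above. Thus the whole difficulty is concentrated in the solvability and sharp uniqueness of $y$ at the degenerate zero of the characteristic field.
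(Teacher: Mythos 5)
Note first that the paper proves only the \emph{final} (uniqueness) assertion of Fact~\ref{fact:W}; the existence of the normal form is attributed to West \cite{W} and Fukui--Hasegawa \cite{FH}. Your attempt to rederive existence via the degenerate quasilinear PDE $g_x y_v - g_v y_x = y\,y_v$ therefore goes beyond what the paper does, and --- as you say yourself --- it is not actually carried out: the characteristic field vanishes at the point of interest and the invariant-manifold argument you sketch is left open. Falling back on citing \cite{W,FH,HHNUY} is what the paper does, so this is acceptable as far as it goes, but the PDE detour contributes no completed proof.

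The genuine gap is in your uniqueness argument. Your determination of $T$ is correct and in substance matches the paper's: $T$ must send the tangential line to the $e_1$-axis, the principal plane to the $e_1e_3$-plane, and the component of $f_{vv}(0,0)$ off the tangential line to the positive $e_3$-direction (this last being equivalent to $z_{yy}(0,0)>0$), which leaves the ambiguity $\operatorname{diag}(-1,-1,1)$, removed by \eqref{eq:J}; the paper phrases the same thing by writing $S=\tilde T\circ T^{-1}=\operatorname{diag}(\epsilon_1,\epsilon_2,\epsilon_1\epsilon_2)$ and forcing $\epsilon_1=\epsilon_2=1$. But your closing sentence, ``with $T$ fixed \dots\ $y$ is the unique germ \dots\ solving the displayed equation,'' invokes exactly the unproven PDE uniqueness you flagged as the main obstacle, so the argument is circular at the decisive point: you must show that two normal forms with the same $T$ and hence the same $x$ have the same $y,b,z$, and you have no uniqueness theorem to cite. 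The paper avoids this entirely by a direct algebraic comparison of the two putative normal forms: equating the second components $xy+b(y)=x\tilde y+\tilde b(\tilde y)$, restricting to $x=0$, and exploiting $b(0)=b'(0)=b''(0)=0$ (and the same for $\tilde b$) to deduce $\tilde y=y$ and $\tilde b=b$, after which the third components give $\tilde z=z$. Equivalently, a short formal-power-series argument closes the gap: if $\tilde y=y+p+\cdots$ with $p$ the lowest-order nonzero deviation, then $x\tilde y+\tilde b(\tilde y)$ contributes the term $xp$, which has no counterpart of the same order in $xy+b(y)$, forcing $p=0$. You should replace the appeal to PDE uniqueness by one of these direct comparisons.
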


This special local coordinate system $(x,y)$ 
is called the \emph{canonical coordinate system}
or the \emph{normal form} of $f$ at the cross cap singularity.
In particular, the function $b(y)$ is called 
the \emph{characteristic function}
associated to the cross cap $f$.
Historically, West \cite{W} initially
introduced this normal form of cross caps.
Unfortunately, it is difficult to have
an access to the reference \cite{W}.  
An algorithm approach to
determine the coefficients
of the Taylor expansions of $b(y)$ 
and $z(x,y)$
can be found in Fukui-Hasegawa
 \cite[Proposition 2.1]{FH}, which we will
apply at Section~\ref{sec:5}.
For the sake of the later discussions,
we give here a proof of the last
assertion of Fact \ref{fact:W}
as follows.
\begin{proof}[Proof of the uniqueness of the normal form]
 Without loss of generality, we may assume that
 $f(0,0)=(0,0,0)$. 
 Suppose that there exists another such
 normal form
 \begin{equation}\label{eq:tilde}
  \tilde T \circ \tilde f(\tilde x,\tilde y)=
          \bigl(\tilde x,\tilde x\tilde y+\tilde b(\tilde y),
	  \tilde z(\tilde x,\tilde y)\bigr),
 \end{equation}
 where 
 $\tilde f(\tilde x,\tilde y):=f(u(\tilde x,\tilde y),v(\tilde x,\tilde y))$.
 Since $f(0,0)=(0,0,0)$, two isometries
 $T$ and $\tilde T$ can be considered as
 matrices in $\SO(3)$.
 By \eqref{eq:cross} and \eqref{eq:tilde}, it holds that
 $T(f_x (0,0)) =\tilde T(\tilde f_{\tilde x} (0,0))= \vect e_1$,
 where $\vect e_1:=(1,0,0)$.
 Since the tangential lines of $f$ and $\tilde f$ coincide, we have
 \[
   \tilde T \circ T^{-1}(\R \vect e_1) = \tilde T(\R f_x(0,0)) = \tilde
   T(\R \tilde f_{\tilde x}(0,0)) = \R \vect e_1.
 \]
 Hence, $\vect e_1$ is an eigenvector of the matrix
 $S:=\tilde T\circ T^{-1}$.
 On the other hand, by \eqref{eq:cross} and \eqref{eq:tilde} 
 again, both of
 $T(f_{yy}(0,0))$ and $\tilde T(\tilde{f}_{\tilde{y} \tilde{y}}(0,0))$
 must be proportional to $\vect e_3:=(0,0,1)$.
 Since the principal planes of $f$ and $\tilde f$ coincide, we have
 \begin{align*}
   \tilde T \circ T^{-1}(\R \vect e_1 + \R \vect e_3)
    &= \tilde T(\R f_x(0,0) + \R f_{yy}(0,0)) \\
    & = \tilde T(\R \tilde{f}_{\tilde x}(0,0) + \R \tilde{f}_{\tilde{y}
  \tilde{y}}(0,0))
    = \R \vect e_1 + \R \vect e_3.
 \end{align*}
 Since we know that $\vect e_1$ is
 an eigenvector of $S$, we can
 conclude that $\vect e_3$ is also
 an eigenvector of $S$.
 Thus $\vect e_2=(0,1,0)$ is also
 an eigenvector of $S$, and we can write
 \[
   S=
    \pmt{
      \epsilon_1 & 0 & 0 \\
      0 & \epsilon_2 & 0 \\
      0 & 0 & \epsilon_1\epsilon_2
    }
     \qquad (\epsilon_i=\pm 1,~i=1,2).
 \]
 Then we get the expression
 \[
   \pmt{
        \epsilon_1 & 0 & 0 \\
        0 & \epsilon_2 & 0 \\
        0 & 0 & \epsilon_1\epsilon_2
   }
   \pmt{
     x\\xy+b(y)\\
     z(x,y)
   }
   = 
   \pmt{
       \tilde x\\ 
       \tilde x\tilde y+\tilde b(\tilde y)\\
       \tilde z(\tilde x,\tilde y)
    }.
 \]
 Comparing the first components, we have
 \begin{equation}\label{eq:x}
  \epsilon_1 x=\tilde x.
 \end{equation}
 Next, comparing the second components, 
 we have
 \begin{equation}\label{eq:middle}
  \epsilon_2(xy+b(y))
   =
   \epsilon_1 x \tilde y+\tilde b(\tilde y).
 \end{equation}
 Substituting $x=0$,
 we get 
 $\epsilon_2b(y)=\epsilon_1 \tilde b(\tilde y)$,
 and therefore
 $\epsilon_2 x y=\epsilon_1 x\tilde y$.
 So we can conclude that
 $\tilde y=\epsilon_1\epsilon_2 y$.
 By \eqref{eq:J}, we have $\epsilon_2=1$.
 By comparing the third components, 
 $\epsilon_1 z(x,y)=\tilde z(\epsilon_1 x,\epsilon_1 y)$ 
 holds.
 Hence we have
 \[
    \epsilon_1 z_{yy}(x,y)=
     \tilde z_{yy}(\epsilon_1 x,\epsilon_1 y)
     =\tilde z_{\tilde y\tilde y}(\tilde x,\tilde y)
     >0.
 \]
 Since $z_{yy}(x,y)>0$, we can conclude that $\epsilon_1=1$.
 In particular, we have $x=\tilde x$, $y=\tilde y$,
 and $z(x,y)$ coincides with $\tilde z(\tilde x, \tilde y)$.
 Then \eqref{eq:middle} reduces to
 $b(y)=\tilde b(y)$, proving the assertion.
\end{proof}

In the statement of
Fact \ref{fact:W},
$b$ and $z$ can be
taken as  real analytic functions
if  $f$ is real analytic. 
The following assertion was proved in \cite{HHNUY}:
\begin{fact}
 The characteristic function 
 $b(y)$ vanishes identically if and only if
 the set of self-intersections of $f$
 lies in the intersection of 
 the principal plane and the normal plane.
\end{fact}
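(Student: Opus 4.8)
The plan is to pass to the normal form of Fact~\ref{fact:W}. The self-intersection set, the principal plane and the normal plane are all unchanged by an isometry of $\R^3$ and by a change of coordinates in the source, so by Fact~\ref{fact:W} we may assume, after normalizing $f(0,0)=(0,0,0)$, that $f(x,y)=\bigl(x,\,xy+b(y),\,z(x,y)\bigr)$ as in \eqref{eq:cross} and \eqref{eq:cross2}, with $b$ the characteristic function; write $(X,Y,Z)$ for the coordinates of the target $\R^3$. Then $f_x(0,0)=\vect e_1$ and $f_{yy}(0,0)\in\R\vect e_3$, so the tangential line is the $X$-axis, the principal plane is $\{Y=0\}$, the normal plane is $\{X=0\}$, and the intersection of the two planes is the $Z$-axis. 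Thus the assertion reduces to: the germ of the self-intersection set at the origin lies in the $Z$-axis if and only if $b\equiv 0$.

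Next I would describe the self-intersection set explicitly. For $(x_1,y_1)\ne(x_2,y_2)$ near $(0,0)$ one has $f(x_1,y_1)=f(x_2,y_2)$ exactly when $x_1=x_2=:x$ and
\begin{align*}
 x(y_1-y_2)+b(y_1)-b(y_2)&=0,\\
 z(x,y_1)-z(x,y_2)&=0 .
\end{align*}
A genuine double point requires $y_1\ne y_2$, so I divide both equations by $y_1-y_2$ and introduce the smooth difference quotients $\beta(y_1,y_2):=\bigl(b(y_1)-b(y_2)\bigr)/(y_1-y_2)$ and $\zeta(x,y_1,y_2):=\bigl(z(x,y_1)-z(x,y_2)\bigr)/(y_1-y_2)$; by \eqref{eq:cross2} the function $\beta$ vanishes to order $2$ at the origin, so in particular its first derivatives vanish there. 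The system then becomes $x=-\beta(y_1,y_2)$ together with $\tilde\zeta(y_1,y_2):=\zeta\bigl(-\beta(y_1,y_2),y_1,y_2\bigr)=0$.

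Then I would analyze $\tilde\zeta$: one computes $\tilde\zeta(0,0)=z_y(0,0)=0$, while $\partial_{y_1}\tilde\zeta(0,0)=\partial_{y_2}\tilde\zeta(0,0)=z_{yy}(0,0)/2>0$, since the $\beta$-contributions drop out. By the implicit function theorem the zero set of $\tilde\zeta$ is therefore a smooth curve $y_2=\iota(y_1)$ through the origin with $\iota(0)=0$ and $\iota'(0)=-1$; in particular it is nonempty, and $\iota(y_1)\ne y_1$ for $0<|y_1|$ small, so it genuinely parametrizes double points. Consequently the germ of the self-intersection set is the image of the arc $s\mapsto f\bigl(x(s),s\bigr)$, where $x(s):=-\beta\bigl(s,\iota(s)\bigr)$ and $s$ runs over a neighbourhood of $0$; its $X$-coordinate is $x(s)$ and its $Y$-coordinate is $x(s)\,s+b(s)$.

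Finally I would read off both implications. If $b\equiv 0$, then $\beta\equiv 0$, so $x(s)\equiv 0$ and the arc equals $s\mapsto\bigl(0,0,z(0,s)\bigr)$, which lies on the $Z$-axis. Conversely, if the self-intersection set lies in $\{X=Y=0\}$, then $x(s)=0$ and $x(s)\,s+b(s)=0$ for every small $s$, hence $b(s)=0$ on a neighbourhood of $0$, i.e.\ $b\equiv 0$ as a germ. I expect the step that needs the most care to be the desingularization: dividing out $y_1-y_2$ and then using the nondegeneracy $z_{yy}(0,0)>0$ from \eqref{eq:cross2} to be sure the double-point set is an honest smooth arc through the origin with the stated leading behaviour, rather than, say, an isolated point. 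A minor additional point is that working in the normal form is harmless for the ``only if'' direction precisely because the hypothesis involves the intersection of \emph{both} planes, not either one alone.
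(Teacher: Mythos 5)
The paper does not actually prove this statement: it cites it as ``proved in \cite{HHNUY}'' and gives no argument of its own, so there is no in-paper proof to compare against. That said, your proof is correct and is the natural direct argument. The reduction to the normal form of Fact~\ref{fact:W} is legitimate since tangential line, principal plane, normal plane and self-intersection set are all invariant under ambient isometries and source diffeomorphisms, and you correctly identify the two planes as $\{Y=0\}$ and $\{X=0\}$. The desingularization via Hadamard difference quotients is the right tool: $\beta$ vanishes to second order because $b$ vanishes to third order, so the $\beta$-contributions drop out of $\nabla\tilde\zeta(0,0)$, and $\partial_{y_1}\tilde\zeta(0,0)=\partial_{y_2}\tilde\zeta(0,0)=z_{yy}(0,0)/2>0$ gives you the IFT and the slope $\iota'(0)=-1$, which in turn shows the double-point locus is a genuine smooth arc with $\iota(s)\ne s$ for small $s\ne 0$. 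Reading off the $X$- and $Y$-coordinates of $s\mapsto f(x(s),s)$ then gives both implications cleanly, and your closing remark is exactly the reason the ``only if'' direction works: containment in a single one of the two planes would \emph{not} force $b\equiv 0$, but containment in the $Z$-axis kills both $x(s)$ and $x(s)s+b(s)$, hence $b$.
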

\begin{definition}\label{def:normal}
 Cross caps whose characteristic functions
 vanish identically
 are called \emph{normal cross caps}
 (cf.\ \cite{HHNUY}).
\end{definition}

Let $C^\infty_o(\R^2)$ (resp.\ $C^\infty_o(\R)$)
be the set of  $C^\infty$ function germs at
the origin $o$ of the $(u,v)$-plane $\R^2$
(resp.\ the line $\R$).
Two functions
$h_1(u,v)$, $h_2(u,v)\in C^\infty_o(\R^2)$
(resp.\ $h_1(t),h_2(t)\in C^\infty_o(\R)$)
are called \emph{equivalent}
(denoted by $h_1\sim h_2$) 
if the Taylor series of $h_1$ coincides
with that of $h_2$ at the origin.
By the well-known Borel theorem (cf.\ \cite[Lemma 2.5 in Chapter IV]{GG}), 
the quotient space $C^\infty_o(\R^2)/\sim$
(resp.\ $C^\infty_o(\R)/\sim$)
can be identified with the 
space $\R[[u,v]]$ (resp.\ $\R[[t]]$)
of formal power series in the variables $u,v$
(resp.\ $t$) at the origin $o$,
that is, the formal power series
\begin{equation}\label{eq:h}
   [h]:=\sum_{k,l=0}^\infty 
          \frac{\partial^{k+l} h(0,0)}
	  {\partial u^k\partial v^l}\frac{u^kv^l}{k!l!}
	  \qquad
	  \left(\mbox{resp.}~ 
	    [h]:=
	    \sum_{j=0}^\infty \frac{d^j h(0)}{dt^j}\frac{t^j}{j!}
	  \right)
\end{equation}
represents the equivalent class containing $h$
in $C^\infty_o(\R^2)/\sim$ (resp.\ in $C^\infty_o(\R)/\sim$).
At the end of this subsection,
we show that the following assertion
is an immediate consequence of our main
result (Theorem \ref{thm:main}): 
\begin{proposition}\label{prop:main}
 Let $f_j:(U;u,v)\to \R^3$ $(j=1,2)$
 be two real analytic 
 cross cap singularities such that the 
 first fundamental form
 {\rm(}i.e.\ the pull back of the
 canonical metric
 of $\R^3${\rm)} 
 of $f_1$ coincides with that of $f_2$.
 Then,  $f_1$ coincides with
 $f_2$ up to orientation-preserving
 isometries  in $\R^3$
 if and only if the Taylor series
 of their characteristic
 functions coincide.
\end{proposition}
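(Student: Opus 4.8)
The plan is to read off both implications from the main theorem, Theorem~\ref{thm:main}, which I take to provide the following: from a Whitney metric together with a prescribed characteristic function it constructs a formal power series cross cap realization, and, once the first fundamental form is fixed, the characteristic function is a faithful label for the formal isometric realizations \emph{modulo orientation-preserving isometries of $\R^3$} (and a change of the source coordinates). Everything else is normal-form bookkeeping plus the remark that real analytic germs are determined by their Taylor expansions.

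For the ``only if'' direction I would use just the uniqueness in Fact~\ref{fact:W}. Suppose $f_1$ and $f_2$ coincide up to an orientation-preserving isometry $T$ of $\R^3$, and fix a normal form $T_2\circ f_2\circ\phi_2$ of $f_2$ with $\phi_2$ satisfying \eqref{eq:J} and characteristic function $b_2$. Since $T_2,T\in\SO(3)$, the map $(T_2\circ T^{-1})\circ f_1\circ\phi_2$ equals $T_2\circ f_2\circ\phi_2$, hence is a normal form of $f_1$ with the same $\phi_2$ (still satisfying \eqref{eq:J}); the uniqueness assertion then says its characteristic function is $b_1$, so $b_1=b_2$ and a fortiori their Taylor series coincide. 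If the intended notion of ``coincide'' also admits a reparametrization of the source, one adds only that this reparametrization may be taken orientation-preserving --- an orientation-reversing one would replace $b$ by $-b$, exactly as in the computation closing the proof of Fact~\ref{fact:W}.

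For the ``if'' direction, assume $f_1,f_2$ are real analytic, share the first fundamental form $d\sigma^2$, and satisfy $[b_1]=[b_2]$. By Theorem~\ref{thm:main}, $f_1$ and $f_2$ --- realizing the same metric with characteristic functions having the same Taylor series --- are formally isometric in the strong (extrinsic) sense: some orientation-preserving isometry together with a formal change of the source coordinates carries the Taylor expansion of $f_2$ to that of $f_1$. Passing to the canonical coordinate systems, whose normalizing isometries lie in $\SO(3)$ and whose normalizing diffeomorphisms $\phi_i$ are real analytic by the real analytic refinement of Fact~\ref{fact:W}, and using again that by the uniqueness in Fact~\ref{fact:W} the normal form depends only on this formal equivalence class, we conclude that the normal forms $F_1=T_1\circ f_1\circ\phi_1$ and $F_2=T_2\circ f_2\circ\phi_2$ have the same Taylor series. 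As $F_1,F_2$ are real analytic germs, $F_1=F_2$; hence $f_1=(T_1^{-1}T_2)\circ f_2\circ(\phi_2\circ\phi_1^{-1})$, exhibiting $f_1$ as $f_2$ followed by the orientation-preserving isometry $T_1^{-1}T_2$.

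The step I expect to carry the real weight is the appeal to Theorem~\ref{thm:main}: one needs that its construction exhausts \emph{all} formal isometric realizations and that the characteristic function distinguishes them faithfully once we quotient by orientation-preserving isometries only --- admitting reflections would identify $b$ with $-b$ and destroy the bijection. By contrast, the passage from the formal to the real analytic category is routine here: because everything has been transported to the canonical coordinate system, no formal diffeomorphism of the source has to be promoted to a convergent one, and a real analytic germ is determined by its Taylor expansion. The only other place requiring attention is the orientation bookkeeping already flagged in the ``only if'' part.
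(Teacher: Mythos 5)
Your proof is correct and takes essentially the same approach as the paper's, which reads: ``The uniqueness of $f$ modulo flat functions and the second assertion of Theorem~\ref{thm:main} immediately imply Proposition~\ref{prop:main}, by setting $d\sigma^2=f_1^*ds^2_{\R^3}$ and $f=f_2$.'' You have simply unpacked the word ``immediately'': the ``only if'' direction from the uniqueness clause of Fact~\ref{fact:W}, the ``if'' direction from the uniqueness-modulo-flat-functions clause of Theorem~\ref{thm:main} together with the observation that, once everything has been transported to the canonical coordinate system, a real analytic germ is recovered from its Taylor series --- all of which the paper leaves to the reader.
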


\begin{figure}[htb]
 \centering
          \includegraphics[height=3.0cm]{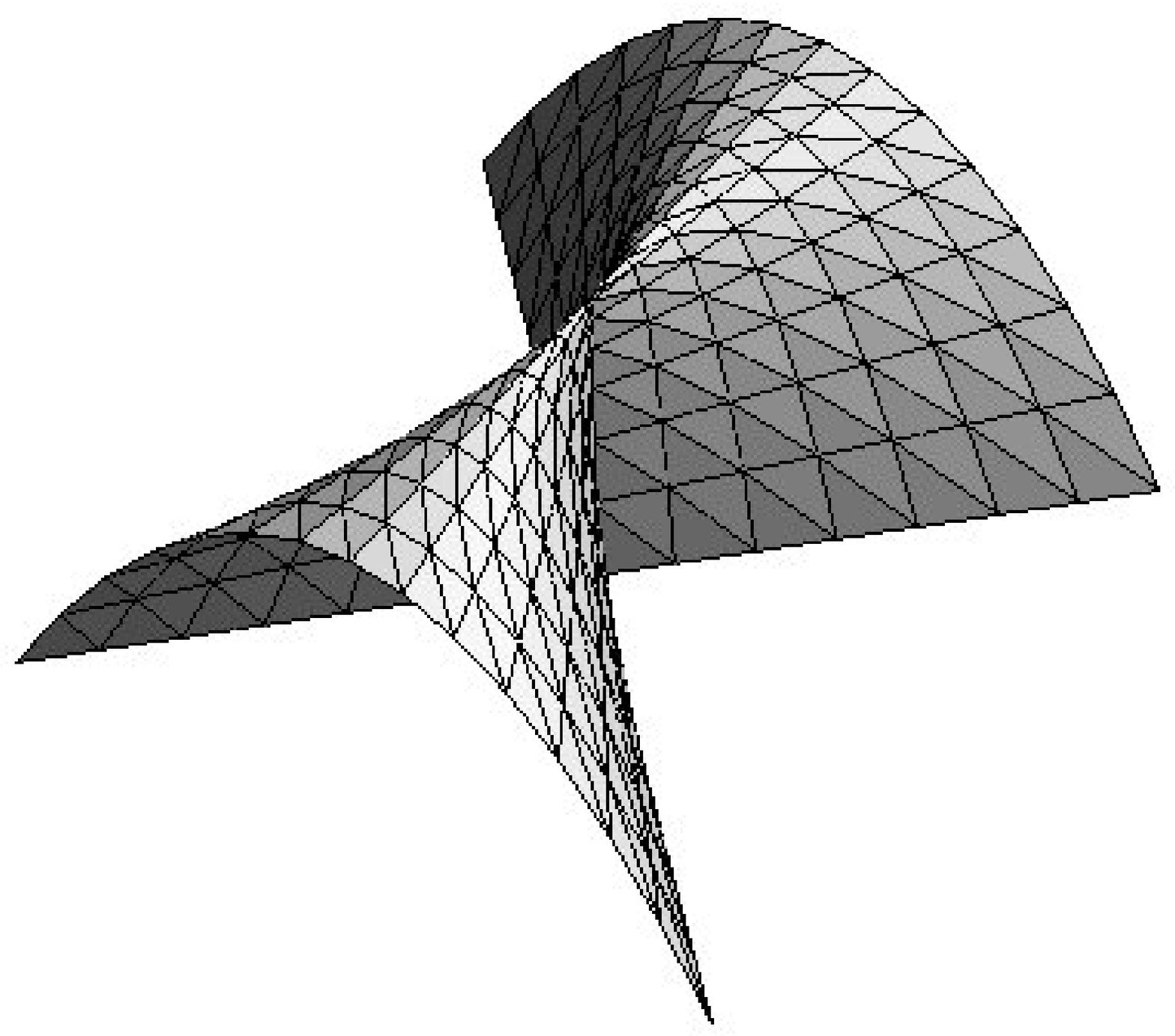}
   \quad  \includegraphics[height=3.0cm]{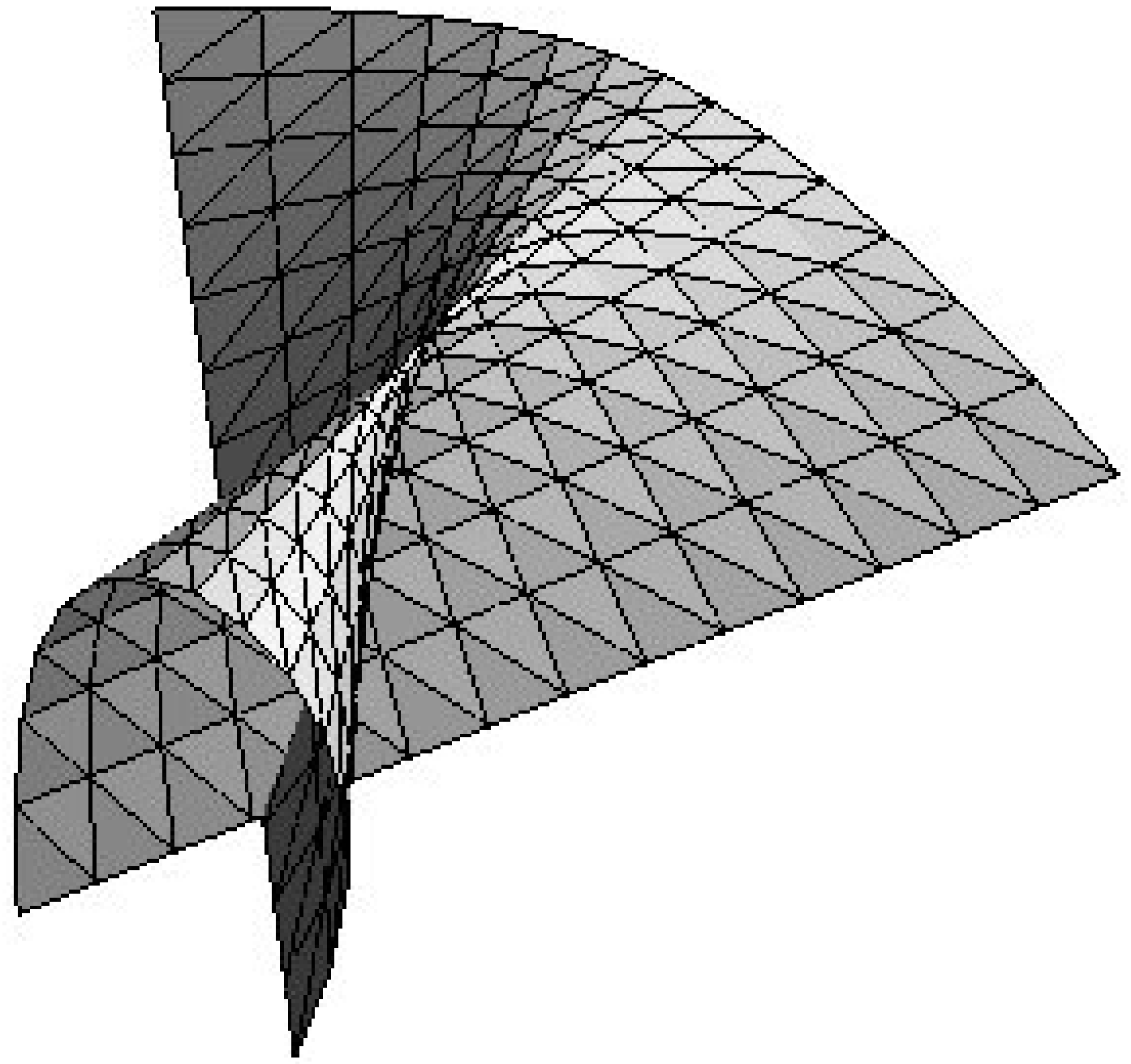}
   \quad  \includegraphics[height=3.0cm]{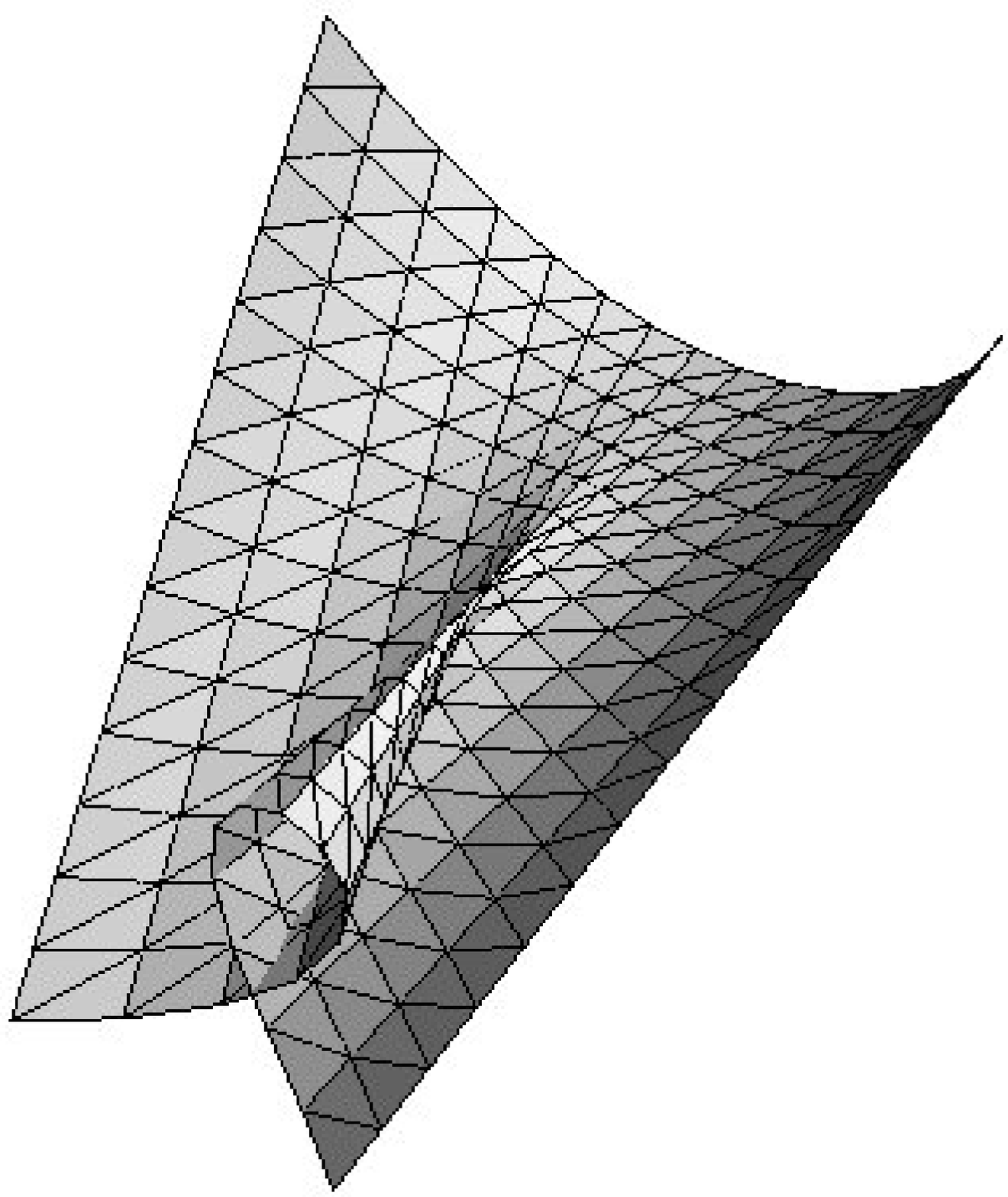}
  \caption{An isometric deformation of the standard cross cap.}
  \label{fig:deform}
\end{figure}

Proposition \ref{prop:main} tells us that an analytic isometric 
deformation of cross caps can be controlled 
by the corresponding deformation of characteristic
functions. 
Examples of isometric deformations 
of cross caps are constructed
in \cite{HHNUY} (cf.\ Figure \ref{fig:deform}).
By the definition of normal cross caps (cf.\ Definition 
\ref{def:normal}),
we get the following corollary:

\begin{corollary}[The rigidity of normal cross caps]
\label{cor:rigidity}
 Two germs of real analytic normal cross caps are congruent
 if and only if they have the same first fundamental form.
\end{corollary}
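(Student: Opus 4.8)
The plan is to obtain the corollary as an immediate consequence of Proposition~\ref{prop:main}; the only thing that needs to be added is the observation that the Taylor-series condition appearing there is automatic for normal cross caps.

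One implication is purely formal and uses neither analyticity nor the cross-cap structure: if $f_1$ and $f_2$ are congruent, say $\Phi\circ f_1=f_2\circ\phi$ with $\Phi$ an isometry of $\R^3$ and $\phi$ a local diffeomorphism of the source, then, since $\Phi$ preserves the canonical metric $ds^2$ of $\R^3$, we get $f_1^*ds^2=\phi^*(f_2^*ds^2)$, i.e.\ $f_1$ and $f_2$ have the same first fundamental form after a suitable change of the local coordinate system.

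For the converse, assume that $f_1$ and $f_2$ are real analytic normal cross caps with the same first fundamental form. By Definition~\ref{def:normal}, a normal cross cap is precisely one whose characteristic function vanishes identically; hence the characteristic functions of $f_1$ and of $f_2$ are both identically $0$, so their Taylor series trivially coincide. The hypotheses of Proposition~\ref{prop:main} are therefore met, and it yields that $f_1$ coincides with $f_2$ up to an orientation-preserving isometry of $\R^3$, hence they are congruent.

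I do not expect any genuine obstacle at the level of the corollary: all the substance is already contained in Proposition~\ref{prop:main}, and behind it in Theorem~\ref{thm:main} together with the uniqueness part of Fact~\ref{fact:W}. The one point worth spelling out is that ``normal cross cap'' means exactly the vanishing of the characteristic function (Definition~\ref{def:normal}); once this is noted, the two Taylor series to be compared are both the zero series, and the comparison is trivial.
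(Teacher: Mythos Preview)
Your proof is correct and follows exactly the route the paper intends: the paper simply remarks that the corollary follows from Proposition~\ref{prop:main} by Definition~\ref{def:normal}, and you have spelled out precisely that deduction (normal cross caps have identically zero characteristic function, hence identical Taylor series, hence Proposition~\ref{prop:main} applies).
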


Corollary~\ref{cor:rigidity} 
suggests us the following:
\begin{question}
 Can a given cross cap germ in $\R^3$
 be isometrically deformed into a normal cross cap?
\end{question}

If the answer to the problem 
in the introduction is affirmative, 
so it is for the above question.
Since the standard cross cap (cf.\ \eqref{eq:01})
is normal, the deformation of the standard 
cross cap
in Figure \ref{fig:deform}
can be re-interpreted as
a normalization of the rightmost
cross cap to the normal cross cap (i.e.\ the leftmost cross cap). 
We give here another example:

\begin{example}\label{ex:normalize}
 We set
 \[
    f_0(u,v)=\left(u,uv+\frac{v^3}6,\frac{u^2}2+\frac{v^2}2\right).
 \]
 This gives the normal form of a cross cap at $(0,0)$.
 (see Figure \ref{fig:example}, left).
 Since $b\ne 0$, this cross cap is not normal.
 We suppose that there exists a real analytic
 germ $f_1$ of a normal cross cap 
 which is isometric to $f_0$.
 By Corollary \ref{cor:rigidity},
 we know the uniqueness of $f_1$.
 Moreover, for a given positive integer $n$,
 we can determine the coefficients of
 its Taylor expansion of order at most $n$
 using our algorithm as in the proof of
 Theorem \ref{thm:main}.
 Figure \ref{fig:example}, right
 is an approximation of $f_1$ by setting $n=10$.
 The main difference between the 
 figures of $f_0$ and $f_1$ appears 
 on the  set of self-intersection.
 The set of self-intersection of the figure of $f_1$
 consists of a straight line 
 perpendicular to the tangential direction of
 the surface at $(0,0)$.
\end{example}

\begin{figure}[htb]
 \centering
  \includegraphics[height=2.0cm]{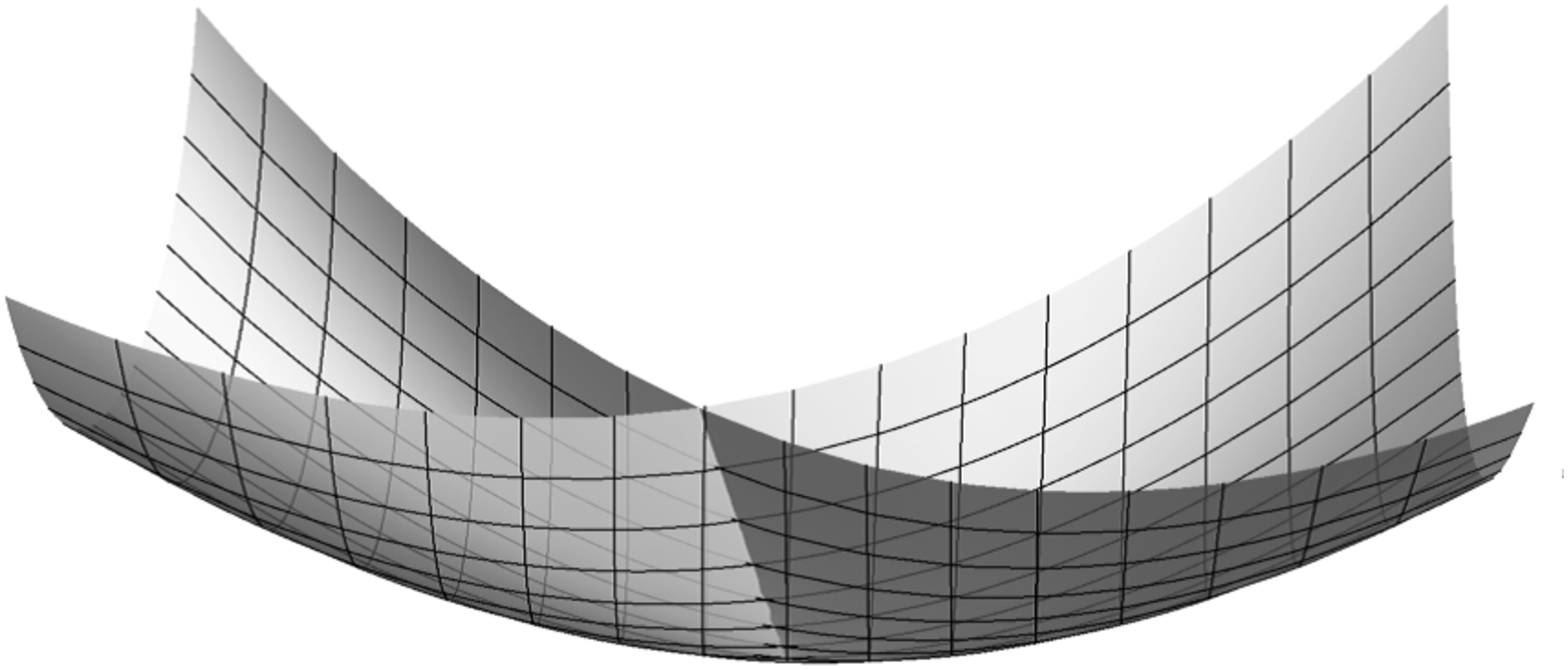}
  \quad  \includegraphics[height=2.0cm]{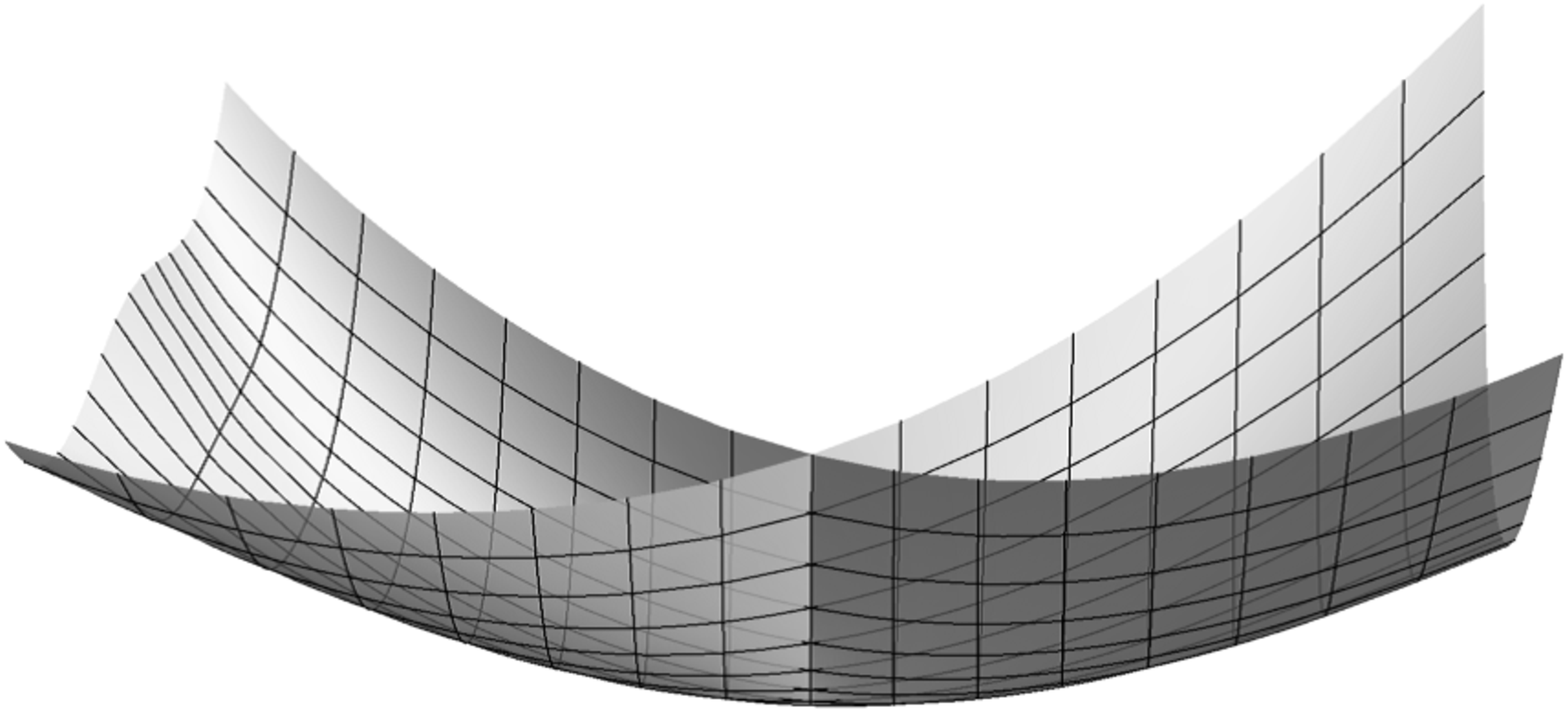}
  \caption{Example~\ref{ex:normalize}:
  The cross cap $f_0$ and its corresponding normal cross cap $f_1$.}
 \label{fig:example}
\end{figure}

\subsection{Whitney metrics}
We fix  a $2$-manifold $M^2$,
and a positive semi-definite metric $d\sigma^2$
on $M^2$.  A point $p\in M^2$ is called a 
\emph{singular point} of 
the metric $d\sigma^2$ if the metric 
is not positive definite at $p$.

Let $p$ be a singular point of
$d\sigma^2$, and $(u,v)$ a local coordinate system
centered at  $p$.
We set 
\begin{equation}\label{eq:efg}
 d\sigma^2=E\,du^2+2F\,du\,dv+G\,dv^2.
\end{equation}
The local coordinate system $(u,v)$ is called \emph{admissible}
if $\partial/\partial v$ is a null direction of the
metric $d\sigma^2$, that is,
it holds that $F=G=0$ at the origin.

\begin{definition}\label{def:adms}
 A singular point $p$ of the metric $d\sigma^2$
 is called 
 \emph{admissible}\footnote{
 Admissibility was originally introduced by 
 Kossowski \cite{K}. 
 He called it
 $d(\langle,\rangle)$-\emph{flatness}.
 Our definition of admissibility is equivalent to
 the original one, see \cite[Proposition 2.7]{HHNSUY}.
}%
 if there exists an admissible local coordinate system $(u,v)$ 
 centered at $p$ satisfying
 \[
    E_v=2F_u,\qquad G_u=G_v=0
 \]
 at the origin.  
 If each singular point of $d\sigma^2$
 is admissible, then $d\sigma^2$ is called \emph{admissible}.
\end{definition}

\begin{definition}[\cite{HHNSUY}]\label{def:hessian}
 Let  $p$ be a singular point of 
 an admissible (positive semi-definite) metric 
 $d\sigma^2$
 on $M^2$ in the sense of Definition~\ref{def:adms}.
 Let $(u,v)$ be an admissible local coordinate system
 centered at $p$ 
 and set
 \[
    \delta:=EG-F^2,
 \]
 where $E,F,G$ are functions
 satisfying \eqref{eq:efg}.
 If the Hessian
 \[
    \Hess_{u,v}(\delta):=
    \det\pmt{\delta_{uu} & \delta_{uv}\\ \delta_{uv} & \delta_{vv}}
 \]
 does not vanish at $p$, then $p$ is called
 an \emph{intrinsic cross cap} of
 $d\sigma^2$.
 Moreover, if $d\sigma^2$ admits only intrinsic cross cap
 singularities on $M^2$, 
 then it is called a \emph{Whitney metric}
 on $M^2$.
\end{definition}

The definition of intrinsic cross caps
is independent of the choice of admissible coordinate systems.
A Gauss-Bonnet type formula for Whitney metrics
is given in \cite{HHNSUY}.

\begin{definition}\label{def:inf}
 Two metrics $d\sigma^2_j$ ($j=1,2$)
 defined on a neighborhood of $p\in M^2$
 are called \emph{formally isometric}
 at $p$ if there exists 
 a local coordinate system $(u,v)$
 centered at $p$ such that
 (see \eqref{eq:h}  for the definition of the bracket $[~]$)
 \[
    [E_1]=[E_2],\quad [F_1]=[F_2],\quad [G_1]=[G_2]
 \]
 hold at $(0,0)$, where
 \[
       d\sigma^2_j=E_j\,du^2+2F_j\,du\,dv+G_j\,dv^2
       \qquad (j=1,2).
 \]
 We write $d\sigma^2_1\approx d\sigma^2_2$
 if two metrics are formally isometric.
 Two cross cap germs are said to be 
 \emph{formally isometric} if their induced metrics are
 formally isometric.
\end{definition}

The following is the main result of this paper:

\begin{theorem}\label{thm:main}
 Let $p$ be a singular point of
 a Whitney metric $d\sigma^2$.
 For any choice of  
 $C^\infty$ function germ $b\in C^\infty_o(\R)$
 satisfying $b(0)=b'(0)=b''(0)=0$,
 there exists a $C^\infty$ map germ $f$
 into $\R^3$
 having a cross cap singularity
 at $p$ satisfying the following two properties:
 \begin{enumerate}
  \item the first fundamental form of $f$
	{\rm(}i.e.\ the pull-back of the canonical metric of $\R^3$
	by $f${\rm)} is formally isometric to $d\sigma^2$
	at $p$, 
  \item the characteristic function of $f$ 
	is equivalent to  $b$,
	that is, it has the same Taylor expansion
	at $0$ as $b$.
 \end{enumerate}
 Moreover, such an $f$ is uniquely determined
 up to addition of flat functions%
\footnote{
 A $C^\infty$ function $h(u,v)$ is called \emph{flat} 
 (at $p$)
 if
 $\partial^{k+l} h(p)/\partial u^k\partial v^l$
 vanishes at $p$ for all non negative integers $k$, $l$.
}%
 at $p$.
 In other words, the Taylor expansion of $f$ 
 gives a unique formal power series solution 
 for the realization problem of the Whitney metric
 $d\sigma^2$ as a cross cap.
\end{theorem}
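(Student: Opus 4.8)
The plan is to convert the statement into a problem about formal power series and to solve it order by order. By Fact~\ref{fact:W} and the uniqueness statement just proved, it is enough to construct a formal power series $z\in\R[[x,y]]$ with $z(0,0)=z_x(0,0)=z_y(0,0)=0$ and $z_{yy}(0,0)>0$, together with a formal change of coordinates $(x,y)\mapsto\bigl(u(x,y),v(x,y)\bigr)$ fixing the origin, such that the first fundamental form of $f(x,y)=\bigl(x,\ xy+b(y),\ z(x,y)\bigr)$, namely
\[
 E=1+y^2+z_x^2,\qquad
 F=xy+yb'(y)+z_xz_y,\qquad
 G=\bigl(x+b'(y)\bigr)^2+z_y^2,
\]
agrees, after this change of coordinates, with the Taylor expansion of $d\sigma^2$ at $p$. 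Since $b(0)=b'(0)=b''(0)=0$ forces $b'(y)=O(y^2)$, the coefficients $E-1,F,G$ have no linear part, and a direct computation shows that $(x,y)$ is an admissible coordinate system for this metric in the sense of Definition~\ref{def:adms}; so we may take the coordinate system of $d\sigma^2$ admissible as well, and the change of coordinates then respects the null directions at $p$. Once such a formal triple $(z,u,v)$ is shown to exist and to be unique, Borel's theorem (\cite[Lemma 2.5 in Chapter IV]{GG}) produces a $C^\infty$ germ $f$ with the prescribed Taylor coefficients, unique up to addition of flat functions.

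The first step is to match the $2$-jets. Fix an admissible coordinate system $(u,v)$ of $d\sigma^2$ centered at $p$; after rescaling $u$ we may assume $E(0,0)=1$. For the metric of $f$ one has $\delta:=EG-F^2=|f_x\times f_y|^2$, and a short computation gives $\Hess_{x,y}(\delta)=4\,z_{yy}(0,0)^2$ at the origin. Since $p$ is an intrinsic cross cap of the Whitney metric $d\sigma^2$ we have $\Hess(\delta)\neq0$ at $p$, and as $\delta\ge0$ this Hessian is positive definite; this is exactly what allows one to match the $2$-jets of the two metrics and to solve for $z_{xx}(0,0),z_{xy}(0,0),z_{yy}(0,0)$ (with $z_{yy}(0,0)>0$) and for the $2$-jet of the coordinate change. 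Concretely, the change of coordinates has to be chosen so that the quadratic parts of $E-1-y^2$ and of $G-\bigl(x+b'(y)\bigr)^2$ become squares of linear forms, compatibly with the $F$-equation; the nondegeneracy of $\Hess(\delta)$ is what makes this possible, and it also fixes the sign ambiguities in the relations $z_x=\sqrt{E-1-y^2}$ and $z_y=\sqrt{G-(x+b'(y))^2}$.

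The induction then runs as follows. Suppose $z,u,v$ are determined modulo terms of order $\ge n$, with $n\ge3$. Comparing the order-$n$ homogeneous parts of the three equations
\[
 \mathcal E=1+y^2+z_x^2,\qquad
 \mathcal F=xy+yb'(y)+z_xz_y,\qquad
 \mathcal G=\bigl(x+b'(y)\bigr)^2+z_y^2,
\]
where $\mathcal E,\mathcal F,\mathcal G$ denote the coefficients of $d\sigma^2$ pulled back by the change of coordinates, produces a system of $3(n+1)$ scalar equations in the $3(n+1)$ order-$n$ coefficients of $z$, $u$ and $v$, whose inhomogeneous terms depend only on the data already fixed. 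The linear part of this system is controlled by the same $2$-jet as before, so by the nondegeneracy of $\Hess(\delta)$ it is invertible at each order; hence the order-$n$ coefficients are uniquely determined, and by induction $z,u,v$ are unique formal power series, which finishes the proof.

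I expect the main obstacle to be exactly this last claim — that the linearized system remains an isomorphism at every order. The difficulty is that $z$ enters the metric only through the quadratic combinations $z_x^2,z_y^2,z_xz_y$, so its order-$n$ coefficients are coupled to the entire lower-order jet, and when $z_{xx}(0,0)=z_{xy}(0,0)=0$ they drop out of the $E$-equation altogether, forcing the change of coordinates to take up the corresponding slack; verifying that the coupled system for $(z,u,v)$ is nonetheless uniquely solvable at each order, with the intrinsic cross cap condition as the precise mechanism guaranteeing it, is where the real work lies.
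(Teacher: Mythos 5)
Your overall strategy does match the paper's in outline: reduce the realization problem to one about formal power series, solve it order by order, and then invoke Borel's theorem to upgrade the formal solution to a $C^\infty$ germ, unique up to flat functions. The reformulation in terms of the canonical coordinate $(x,y)$ for $f$ and a coordinate change $(x,y)\mapsto(u,v)$ is equivalent to the paper's setup (the paper works directly with $x(u,v),y(u,v),z(u,v)$ in the fixed admissible coordinate $(u,v)$ of $d\sigma^2$), and your computation $\Hess_{x,y}(\delta)=4z_{yy}(0,0)^2$ at the origin is correct. The matching of $2$-jets also corresponds to the paper's Lemma~\ref{lem:2}.

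However, there is a genuine gap, and you identify it yourself in the final paragraph: you do not prove that the linear system determining the order-$n$ coefficients is invertible at every order $n\ge 3$, and you merely say you \emph{expect} that the intrinsic cross cap condition guarantees it. That is precisely the content of Proposition~\ref{prop:key}, whose proof occupies Sections~3 and~4 of the paper and is the heart of the argument. The difficulty is not just notational. First, the correct degree bookkeeping is asymmetric: at the $m$-th step one must solve for $X(k,l)$ with $k+l=m+1$, $Y(k,l)$ with $k+l=m-1$, and $Z(k,l)$ with $k+l=m$ simultaneously (giving $3(m+1)$ unknowns matching $3(m+1)$ equations), and with your uniform ``order-$n$ coefficients of $z$, $u$, $v$'' count the system would not close up properly. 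Second, the paper must first prove that all terms of $f_u\cdot f_u$, $f_u\cdot f_v$, $f_v\cdot f_v$ containing $b'(y)$, as well as several other products, are ``$m$-ignorable'' — that is, they contribute no top-order unknowns to the equations — before the remaining leading terms yield the explicit linear relations \eqref{eq:Eee1}--\eqref{eq:Ggg1}. Third, the resulting system is then solved in a specific order ($Z_0$, then $Z_1$, then $Z_{k+2}$ from a recursion, then the $Y_k$, then the $X_k$, and finally $X_{m+1}$), and the invertibility at each substitution rests on the explicit constant $a_{0,2}>0$ (equivalently $z_{yy}(0,0)>0$), not on the Hessian condition as an abstract nondegeneracy statement. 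Your heuristic that $\Hess(\delta)\neq 0$ ``is the mechanism'' is pointing in the right direction — it is equivalent to $a_{0,2}>0$ — but it does not by itself demonstrate that the coupled, nonsymmetric linear system is an isomorphism. Without this work the proof is incomplete.
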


If the problem in the introduction
is affirmative, then the set of 
analytic cross cap germs
which have the same first fundamental form
can be identified with the set of
convergent power series in one variable.

\begin{proof}[Proof of Proposition \ref{prop:main}]
 The uniqueness of $f$ modulo flat functions
 and the second assertion of 
 Theorem \ref{thm:main} immediately
 imply Proposition \ref{prop:main},
 by setting $d\sigma^2=f_1^*ds^2_{\R^3}$
 and $f=f_2$, where $ds^2_{\R^3}$ is the canonical metric
 of the Euclidean $3$-space $\R^3$.
\end{proof}

\subsection{The strategy of the proof of Theorem \ref{thm:main}}
From now on, we fix a Whitney metric
\[
  d\sigma^2=E\,du^2+2F\,du\,dv+G\,dv^2
\]
defined on a neighborhood $U$ of 
the origin $o=(0,0)$ in the $(u,v)$-plane 
$\R^2$. 
We suppose that $o$ is a
singular point of $d\sigma^2$.
We set
\[
  \E:=[E],\qquad \F:=[F],\qquad \G:=[G],
\]
that is, $\E$, $\F$, $\G$ are the formal power series
in $\R[[u,v]]$ associated to the coefficients of
the metric $d\sigma^2$.
\begin{definition}\label{def:order}
 A formal power series 
 \begin{equation}\label{eq:P}
  P:=\sum_{k,l=0}^{\infty}
       \frac{P(k,l)}{k!l!}u^k v^l\qquad (P(k,l)\in\R)
 \end{equation}
 in $\R[[u,v]]$
 is said to be of \emph{order at least $m$} if
 \[
     P(k,l)=0 \qquad (k+l< m).
 \]
\end{definition}

We denote by $\O_m$ the ideal of $\R[[u,v]]$
consisting of series of order at least $m$.
By definition, $\O_0=\R[[u,v]]$.
In \cite{HHNSUY},  the following assertion
was given:
\begin{fact}[{\cite[Theorem 4.11]{HHNSUY}}]\label{fact:hhnsuy}
 One can choose the
 local coordinate system $(u,v)$ centered at the 
 singular point of $d\sigma^2$
 so that 
 \begin{alignat}{3}
  \label{eq:comp-E}
  \E\ (=[E]) &= 1 + 
       &&a_{2,0}^2 u^2 + 2 a_{2,0}a_{1,1}u v + (1+a_{1,1}^2)v^2
       &&+O_3(u,v),\\
  \label{eq:comp-F}
  \F\ (=[F]) &=     &&a_{2,0}a_{1,1} u^2 
           + (a_{2,0}a_{0,2}+a_{1,1}^2+1)u v + a_{1,1}a_{0,2}v^2
       &&+O_3(u,v),\\
  \label{eq:comp-G}
  \G\ (=[G]) &=     &&(1+a_{1,1}^2) u^2 + 
           2 a_{1,1}a_{0,2}u v + a_{0,2}^2v^2
       &&+O_3(u,v),
 \end{alignat}
 where $a_{2,0}$, $a_{1,1}$ and $a_{0,2}\ (>0)$
 are real numbers%
\footnote{
 As shown in \cite{HHNSUY}, $a_{2,0}$, $a_{1,1}$ and $a_{0,2}$
 are invariants of the Whitney metric $d\sigma^2$.
}, 
 and $O_m(u,v)$ $(m=1,2,3,\dots)$
 is a certain element of $\O_m$.
\end{fact}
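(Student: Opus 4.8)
The plan is to construct the coordinate system of the statement by composing a chain of explicit coordinate changes, each killing a prescribed block of low-order Taylor coefficients of $d\sigma^2$ without disturbing what was arranged before. I would start from any admissible coordinate system $(u,v)$ centered at $p$, which exists because an intrinsic cross cap is in particular admissible; there $[F]$ and $[G]$ vanish at the origin, and since $G=d\sigma^2(\partial_v,\partial_v)\ge 0$ attains a minimum at $p$ we also get $G_u(p)=G_v(p)=0$, while admissibility gives $E_v(p)=2F_u(p)$. Moreover $d\sigma^2$ has rank $1$ at $p$: if it had rank $0$ there, a direct computation would give $\delta_{uu}(p)=-2F_u(p)^2$, $\delta_{vv}(p)=-2F_v(p)^2$, $\delta_{uv}(p)=-2F_u(p)F_v(p)$ for $\delta=EG-F^2$, so $\Hess_{u,v}(\delta)(p)=0$, against the hypothesis; hence $E(p)>0$, and rescaling $u$ normalizes $E(p)=1$. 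A later coordinate change keeps the system admissible exactly when $\partial u/\partial\tilde v(p)=0$, i.e.\ when $\partial_v$ is again sent to a null direction at $p$, the remaining admissibility conditions then being automatic.

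Next I would apply a change $u\mapsto u+(\text{a quadratic form in }u,v)$, $v\mapsto v$, whose three coefficients are read off from the $1$-jet, to make the linear parts of $[E]-1$ and of $[F]$ vanish (that of $[G]$ is already trivial). Admissibility is used a second time: of the two equations requiring the linear part of the new $[F]$ to vanish, one holds automatically because $E_v(p)=2F_u(p)$, so the three available coefficients suffice. After this step $[E]=1+O_2$, $[F]=O_2$, $[G]=O_2$, and the $2$-jet of $d\sigma^2$ is recorded by the quadratic forms $e_q:=([E]-1)_{\deg 2}$, $f_q:=[F]_{\deg 2}$, $g_q:=[G]_{\deg 2}$.

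Now comes the main step: normalizing $(e_q,f_q,g_q)$. Since $\delta\ge 0$ vanishes at $p$, its Hessian there is positive semi-definite, and $\Hess_{u,v}(\delta)(p)\ne 0$ makes it positive definite; as $\delta\equiv g_q\pmod{\O_3}$ in the present coordinates, $g_q$ is positive definite. The admissible coordinate changes that preserve $[E]=1+O_2$, $[F]=O_2$, $[G]=O_2$ act on the $2$-jet only through a sign $\epsilon=\pm1$, two parameters $\nu,\rho$ ($\rho\ne 0$) coming from the linear part, and the four coefficients of a cubic term added to the first coordinate — the quadratic correction to the first coordinate being forced to vanish, and everything else immaterial for the $2$-jet. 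Under such a change $g_q$ is replaced by $\rho^2\,g_q(\epsilon\tilde u,\nu\tilde u+\rho\tilde v)$; imposing that $g_q-\tilde u^2$ become a perfect square — equivalently that $g_q$ acquire the shape $\tilde u^2+(a_{1,1}\tilde u+a_{0,2}\tilde v)^2$ of \eqref{eq:comp-G} — forces $\rho^2$ to a definite positive value (independent of $\epsilon,\nu$), after which $a_{0,2}$ may be taken positive and $a_{1,1}$ is the remaining datum of $g_q$. The four cubic coefficients enter $e_q,f_q$ affinely, and together with the leftover freedom in $\nu$ can be chosen so that $e_q,f_q$ take exactly the forms \eqref{eq:comp-E}, \eqref{eq:comp-F}, the parameter surviving there being $a_{2,0}$. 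Checking that the coordinate changes fixing the entire normal form are discrete then shows that $a_{2,0},a_{1,1},a_{0,2}$ depend only on the formal isometry class of $d\sigma^2$.

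The delicate point is this last matching. The parameter count is exactly right — six continuous parameters (two from the linear part, four from the cubic term in the first coordinate) against the nine coefficients of $(e_q,f_q,g_q)$, the difference accounting for the three invariants — but one must verify that the resulting system is genuinely solvable, i.e.\ that the residual freedom carries the $2$-jet precisely onto the three-parameter family \eqref{eq:comp-E}--\eqref{eq:comp-G} rather than onto a nearby but different one. This relies on the way $g_q$ transforms as a quadratic form, on its positive definiteness, and on the affine dependence of $(e_q,f_q)$ on the cubic coefficients; carrying out the bookkeeping and identifying $a_{2,0},a_{1,1},a_{0,2}$ is the heart of the argument, the preceding reductions being routine.
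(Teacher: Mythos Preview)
The paper does not prove this statement: it is stated as a \emph{Fact} with an explicit citation to \cite[Theorem~4.11]{HHNSUY}, and no argument is given in the present paper. So there is no ``paper's own proof'' to compare against; the authors simply import the result and use it as input to their main construction.

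As for your sketch on its own merits: the reductions up through $[E]=1+O_2$, $[F]=O_2$, $[G]=O_2$ are correct and well explained, including the use of admissibility to make the three quadratic parameters suffice and the rank-one argument via $\Hess(\delta)$. The determination of $\rho^2$ from the perfect-square condition on $g_q-\tilde u^2$ is also right. The part you flag as ``delicate'' is genuinely the missing piece: with $p_3=c_1\tilde u^3+c_2\tilde u^2\tilde v+c_3\tilde u\tilde v^2+c_4\tilde v^3$ one has $(p_3)_{\tilde u}$ and $(p_3)_{\tilde v}$ sharing the middle coefficients, so the four $c_i$ together contribute only a $4$-dimensional affine family to the six coefficients of $(\tilde e_q,\tilde f_q)$, not six independent directions; the extra freedom in $\nu$ and the unknown $a_{2,0}$ must therefore interact with these in exactly the right way, and moreover $a_{1,1}$ in the target depends on $\nu$. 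Your dimension count is consistent, but to turn the sketch into a proof you would have to write down the resulting system explicitly and solve it, which you have not done. That said, your outline matches the way such normal-form results are typically established, and nothing in it is wrong---it is a sketch with an acknowledged gap rather than an erroneous argument.
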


So we can assume that our local coordinate system $(u,v)$ satisfies
\eqref{eq:comp-E},  \eqref{eq:comp-F} and  \eqref{eq:comp-G}.
We set
\begin{align*}
 \E=\sum_{k,l=0}^{\infty}
       \frac{\E(k,l)}{k!l!}u^k v^l\qquad
           (\E(k,l)\in\R), \\
 \F=\sum_{k,l=0}^{\infty}
       \frac{\F(k,l)}{k!l!}u^k v^l\qquad 
           (\F(k,l)\in\R), \\
 \G=\sum_{k,l=0}^{\infty}
       \frac{\G(k,l)}{k!l!}u^k v^l\qquad 
           (\G(k,l)\in\R). 
\end{align*}
We now fix a $C^\infty$ function germ $b(t)$
satisfying $b(0)=b'(0)=b''(0)=0$.
\begin{lemma}\label{lem:realize-zero}
 Let 
 \[
    f=\biggl(x(u,v),\,\,x(u,v)y(u,v)+b\bigl(y(u,v)\bigr),\,\,z(u,v)\biggr)
 \]
 be a $C^\infty$ map having cross cap singularity
 at $(0,0)$ satisfying
 \eqref{eq:J},
 $x(0,0)=y(0,0)=0$ and
 \begin{equation}\label{eq:bz-init}
  b(0)=b'(0)=b''(0)=0,\quad
     z(0,0)=z_u(0,0)=z_v(0,0),\quad z_{vv}(0,0)>0
 \end{equation}
 such that the first fundamental form
 of $f$ coincides with the Whitney metric $d\sigma^2$.
 Moreover, suppose that $(u,v)$ satisfies
 \eqref{eq:comp-E},
 \eqref{eq:comp-F} and  \eqref{eq:comp-G}.
 Then, 
 \[
    x_u(0,0)= \pm 1, \quad
    x_v(0,0)=0,\quad
    x_u(0,0)y_v(0,0)>0.
 \]
\end{lemma}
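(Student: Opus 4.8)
The plan is to read off the $1$-jet of the composite map $f$ at the origin and match it against the normal-form data for $d\sigma^2$. First I would compute $f_u$ and $f_v$: writing the second component of $f$ as $g(u,v):=x(u,v)y(u,v)+b\bigl(y(u,v)\bigr)$, one has $g_u=x_u y+\bigl(x+b'(y)\bigr)y_u$ and similarly $g_v=x_v y+\bigl(x+b'(y)\bigr)y_v$. Evaluating at $(0,0)$ and using $x(0,0)=y(0,0)=0$ and $b'(0)=0$, together with $z_u(0,0)=z_v(0,0)=0$ from \eqref{eq:bz-init}, this yields
\[
  f_u(0,0)=\bigl(x_u(0,0),\,0,\,0\bigr),
  \qquad
  f_v(0,0)=\bigl(x_v(0,0),\,0,\,0\bigr).
\]

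Next, since the first fundamental form of $f$ coincides with $d\sigma^2$ and $(u,v)$ is assumed to satisfy \eqref{eq:comp-E}, \eqref{eq:comp-F} and \eqref{eq:comp-G}, evaluating the coefficients at the origin gives $E(0,0)=1$ and $F(0,0)=0$ by \eqref{eq:comp-E} and \eqref{eq:comp-F}. On the other hand, from the formulas above $E(0,0)=\eucinner{f_u(0,0)}{f_u(0,0)}=x_u(0,0)^2$ and $F(0,0)=\eucinner{f_u(0,0)}{f_v(0,0)}=x_u(0,0)\,x_v(0,0)$. Hence $x_u(0,0)^2=1$, so $x_u(0,0)=\pm1$, and then $x_u(0,0)\,x_v(0,0)=0$ together with $x_u(0,0)\neq0$ forces $x_v(0,0)=0$.

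For the last assertion I would simply invoke the sign condition \eqref{eq:J}: evaluating the determinant at $(0,0)$ and substituting $x_v(0,0)=0$ obtained above,
\[
  0<x_u(0,0)\,y_v(0,0)-x_v(0,0)\,y_u(0,0)=x_u(0,0)\,y_v(0,0),
\]
which is exactly the claimed inequality. This completes the proof.

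I do not expect any genuine obstacle here: the lemma is essentially bookkeeping of the $1$-jet of $f$ at the singular point, and even the hypothesis that $(0,0)$ is a cross cap is not really needed beyond what is recorded in \eqref{eq:bz-init} and \eqref{eq:J}. The only point requiring care is the order of the deductions — one must first extract $x_u(0,0)=\pm1$ from $E(0,0)=1$, then deduce $x_v(0,0)=0$ from $F(0,0)=0$, and only after that does \eqref{eq:J} collapse to $x_u(0,0)\,y_v(0,0)>0$.
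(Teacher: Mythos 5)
Your proof is correct and follows essentially the same route as the paper's: compute $f_u(0,0)$ (and $f_v(0,0)$) at the origin, read off $x_u(0,0)=\pm1$ from $E(0,0)=1$ and $x_v(0,0)=0$ from $F(0,0)=0$, then substitute into the Jacobian condition \eqref{eq:J}. The only cosmetic difference is that you write out $f_v(0,0)$ in full, whereas the paper only needs its first component for the dot product with $f_u(0,0)$.
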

\begin{proof}
 Since $x(0,0)=y(0,0)=0$, $b'(0)=0$, 
 and $z_u(0,0)=0$, we have
 $f_u(0,0)=\bigl(x_u(0,0),0,0\bigr)$.
 In particular,
 \[
    1=\E(0,0) = 
      f_u(0,0)\cdot f_u(0,0)= x_u(0,0)^2
 \]
 holds and $x_u(0,0)=\pm 1$.
 On the other hand, we have
 \[
    0 = \F(0,0)=f_u(0,0)\cdot f_v(0,0)= x_u(0,0)x_v(0,0),
 \]
 and we get $x_v(0,0)=0$.
 By \eqref{eq:J},
 \[
   0<\left.\frac{\partial (x,y)}{\partial(u,v)}
                  \right|_{(u,v)= (0,0)}
        = x_u(0,0)y_v(0,0)
 \]
 holds, proving the assertion.
\end{proof}

Replacing $(u,v)$ by $(-u,-v)$ if necessary, 
we may assume that
\begin{equation}\label{eq:2nd-N}
  x_u(0,0) =1,\qquad y_v(0,0)>0.
\end{equation}
The map $f$ as in Lemma \ref{lem:realize-zero}
satisfies
 \begin{align}
  f_u\cdot f_u&= (1+y^2)x_u^2 + 2 \bigl(x+b'(y)\bigr)yx_uy_u
                 + \bigl(x^2+2xb'(y)+b'(y)^2\bigr)y_u^2+z_u^2,
  \label{eq:E}\\
  f_u\cdot f_v&= (1+y^2)x_ux_v + \bigl(x+b'(y)\bigr)y
                                (x_uy_v+x_vy_u) 
  \label{eq:F}\\
  \nonumber
  & \phantom{(1+y^2)x_ux_v} 
             + \bigl(x^2+2xb'(y)+b'(y)^2\bigr)y_uy_v+z_uz_v,
  \\
  f_v\cdot f_v&= (1+y^2)x_v^2 + 2\bigl(x+b'(y)\bigr)y x_vy_v
           + \bigl(x^2+2xb'(y)+b'(y)^2\bigr)y_v^2+z_v^2,
  \label{eq:G}
\end{align}
where
\begin{equation}\label{eq:beta}
 b'(t):=\frac{db(t)}{dt}.
\end{equation}

\begin{definition}\label{def:term}
 We call 
 \begin{equation}\label{eq:term}
  x_u^2, \quad y^2x_u^2,\quad
   xyx_uy_u,\quad b'(y)yx_uy_u,\quad
   x^2y_u^2,\quad  xb'(y)y_u^2, \quad b'(y)^2y_u^2,\quad 
   z_u^2
 \end{equation}
 the \emph{terms of $f_u\cdot f_u$}.
 Similarly,
 the \emph{terms of $f_u\cdot f_v$}
 (resp.\ the \emph{terms of $f_v\cdot f_v$})
 are also defined.
\end{definition}

We consider the following three polynomials in $u,v$:
\begin{align}
 \label{eq:X00}
 \Xx_{m+1}&:=u+\sum_{2\le k+l\le m+1} \frac{X(k,l)}{k!l!}u^ku^l, \\
 \label{eq:Y00}
 \Yy_{m-1}&:=\sum_{1\le k+l\le m-1} \frac{Y(k,l)}{k!l!}u^kv^l, \\
 \label{eq:Z00}
 \Zz_m&:=\sum_{2\le k+l\le m} \frac{Z(k,l)}{k!l!}u^kv^l.
\end{align}
We set
\[
   f^m:=\bigl(
             \Xx_{m+1},
	     \Xx_{m+1}\Yy_{m-1}+\beta_{m+1}(\Yy_{m-1}),
   	     \Zz_{m}
	\bigr),\qquad
   	\beta_{m+1}(t):=\sum_{j=3}^{m+1}\frac{b_j}{j!}t^j,
\]
where 
$[b]=\sum_{j=3}^{\infty}{b_j}t^j/{j!}$.
By definition, each coefficient of $f^m$ is a polynomial 
in the coefficients of $\Xx_{m+1}$, $\Yy_{m-1}$
and $\Zz_{m}$.
To describe the key assertion (cf.\ Proposition \ref{prop:key}), 
we prepare a terminology as follows:
\begin{definition}\label{def:terms}
 A triple of polynomials  $(\Xx_{m+1},\Yy_{m-1},\Zz_{m})$
 as in \eqref{eq:X00}, \eqref{eq:Y00} and \eqref{eq:Z00}
 are called the \emph{$m$-th formal solution} if they satisfy
 (cf.\ \eqref{eq:bz-init} and \eqref{eq:2nd-N})
\begin{align*}
 &X(0,0)=Y(0,0)=0,\quad X(1,0)=1,\quad Y(0,1)>0, \\
 &Z(0,0)=Z(1,0)=Z(0,1)=0,\quad Z(0,2)>0
\end{align*}
and
\begin{align}
\label{eq:E0}
 \E&=[f^m_u\cdot f^m_u]+O_{m+1}(u,v), \\
 \label{eq:F0}
 \F&=[f^m_u\cdot f^m_v]+O_{m+1}(u,v), \\
 \label{eq:G0}
 \G&=[f^m_v\cdot f^m_v]+O_{m+1}(u,v), 
\end{align}
where $O_{m+1}(u,v)$ is a term belonging to $\O_{m+1}$.
\end{definition}

The key assertion, which we would like to prove in Section~\ref{sec:App},
is stated as follows:

\begin{proposition}\label{prop:key}
 Let $(u,v)$ be a local coordinate system
 satisfying  \eqref{eq:comp-E},
 \eqref{eq:comp-F} and  \eqref{eq:comp-G}.
 Then for each $m\ge 2$, there exists
 a unique $m$-th formal solution.
\end{proposition}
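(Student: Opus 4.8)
The plan is to prove the proposition by induction on $m$, reducing the existence and uniqueness of the $m$-th formal solution to the invertibility of a single finite-dimensional linear system at each step. For the base case $m=2$ one matches the constant, linear, and quadratic parts of $\E,\F,\G$ against those of $[f^2_u\cdot f^2_u]$, $[f^2_u\cdot f^2_v]$, $[f^2_v\cdot f^2_v]$: the constant-order conditions hold automatically, the linear-order conditions force the degree-$2$ part of $\Xx_3$ to vanish, and then the quadratic-order conditions, together with \eqref{eq:comp-E}--\eqref{eq:comp-G} and $a_{0,2}>0$, determine everything uniquely, namely $\Xx_3=u$, $\Yy_1=v$, $\Zz_2=\tfrac{a_{2,0}}{2}u^2+a_{1,1}uv+\tfrac{a_{0,2}}{2}v^2$ (with $\beta_3=(b_3/6)t^3$). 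In particular $Y(0,1)=1>0$ and $Z(0,2)=a_{0,2}>0$, so the sign normalizations of Definition~\ref{def:terms} hold, and one checks directly that this $f^2$ has first fundamental form agreeing with $d\sigma^2$ modulo $\O_3$.

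For the inductive step ($m\ge 3$), suppose the $(m-1)$-st formal solution $(\Xx_m,\Yy_{m-2},\Zz_{m-1})$ exists and is unique, and write $\Xx_{m+1}=\Xx_m+\hat X$, $\Yy_{m-1}=\Yy_{m-2}+\hat Y$, $\Zz_m=\Zz_{m-1}+\hat Z$ with $\hat X,\hat Y,\hat Z$ homogeneous of degrees $m+1$, $m-1$, $m$. By hypothesis the parts of order $\le m-1$ of \eqref{eq:E0}--\eqref{eq:G0} already match. From the base case, $x-u$ has order $\ge 4$, $y-v$ has order $\ge 2$, $z$ has order $\ge 2$ with quadratic part $\tfrac{a_{2,0}}2u^2+a_{1,1}uv+\tfrac{a_{0,2}}2v^2$, and $b'$ has order $\ge 2$; since in \eqref{eq:E}--\eqref{eq:G} each of $\hat X,\hat Y,\hat Z$ occurs only through itself or a single first derivative, every term there that is nonlinear in $(\hat X,\hat Y,\hat Z)$, or that pairs a new unknown with a factor of positive order coming from the ``small'' parts just listed, lies in $\O_{m+1}$. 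Hence the degree-$m$ part of \eqref{eq:E0}--\eqref{eq:G0} is an \emph{inhomogeneous linear} system in the $(m+2)+m+(m+1)=3m+3$ coefficients of $(\hat X,\hat Y,\hat Z)$, consisting of $3(m+1)=3m+3$ scalar equations whose right-hand sides are determined by the $(m-1)$-st solution (and the prescribed $b_j$). A direct computation identifies it, after dividing the first and third equations by $2$, as
\begin{align*}
 \hat X_u+v\,\partial_u(u\hat Y)+(a_{2,0}u+a_{1,1}v)\hat Z_u&=R_E,\\
 \hat X_v+m\,u\hat Y+(a_{2,0}u+a_{1,1}v)\hat Z_v+(a_{1,1}u+a_{0,2}v)\hat Z_u&=R_F,\\
 u^2\hat Y_v+(a_{1,1}u+a_{0,2}v)\hat Z_v&=R_G,
\end{align*}
with $R_E,R_F,R_G$ known and homogeneous of degree $m$; note $\hat X$ enters only via $\hat X_u$ in the first equation and via $\hat X_v$ in the second, and not at all in the third.

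It remains to show this square system has a unique solution. Granting that, existence of the $m$-th formal solution is immediate, and uniqueness follows because any two $m$-th solutions have $(m-1)$-st formal solutions as their truncations (hence agree below degree $m$ by the inductive uniqueness), so their difference solves the associated homogeneous system. Thus \textbf{the heart of the proof is the invertibility of this $(3m+3)\times(3m+3)$ system}, i.e., that $R_E=R_F=R_G=0$ forces $\hat X=\hat Y=\hat Z=0$. I would argue as follows: from the homogeneous third equation, $u^2$ divides $(a_{1,1}u+a_{0,2}v)\hat Z_v$, and since $a_{0,2}>0$ makes the linear form $a_{1,1}u+a_{0,2}v$ coprime to $u$, we get $u^2\mid\hat Z_v$, hence $u^2\mid\hat Z$; feeding this into the homogeneous second equation yields $u\mid\hat X_v$; and then, combining the homogeneous first equation with the compatibility relation $\partial_v(\text{first})=\partial_u(\text{second})$, one bootstraps the divisibility and concludes that $\hat Z$, and then $\hat Y$ and $\hat X$, must all vanish. (Equivalently, by homogeneity one may set $t=v/u$, $\hat X=u^{m+1}\xi(t)$, $\hat Y=u^{m-1}\eta(t)$, $\hat Z=u^m\zeta(t)$; the system becomes three first-order linear ODEs in $t$, and the polynomial-degree bounds $\deg\xi\le m+1$, $\deg\eta\le m-1$, $\deg\zeta\le m$, together with $a_{0,2}>0$, leave only the zero solution.) The main obstacle is precisely this invertibility argument, together with the careful bookkeeping that produces the displayed system in the first place; the positivity $a_{0,2}>0$ is exactly what makes the collapse go through.
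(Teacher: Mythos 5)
Your overall strategy mirrors the paper's: by induction on $m$, the unknown top-order coefficients of $\Xx_{m+1},\Yy_{m-1},\Zz_m$ satisfy a square linear system obtained from the order-$m$ parts of \eqref{eq:E0}--\eqref{eq:G0}, and the crux is showing that system is invertible. Your identification of the $m$-ignorable terms and the resulting displayed system is correct: writing out the $(k,l)$-coefficients of your three equations does reproduce exactly the relations \eqref{eq:Eee}, \eqref{eq:Fff}, \eqref{eq:Ggg} of the paper (up to the factors of $2$ you account for). So the setup is sound.

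The gap is in the invertibility argument. You correctly extract $Z(0,m)=Z(1,m-1)=0$ from the homogeneous third equation (using $a_{0,2}>0$) and then $X(0,m+1)=0$ from the second, but the subsequent ``bootstrapping'' is only gestured at, and the alleged compatibility identity $\partial_v(\text{first})=\partial_u(\text{second})$ is not a valid identity of the system (there is no reason $E_v=2F_u$ should hold beyond the origin, and the linear system in $(\hat X,\hat Y,\hat Z)$ does not close under that operation). In particular, knowing $u^2\mid\hat Z$ does not by itself propagate to higher powers of $u$: the recursion that kills all the remaining coefficients genuinely mixes $Z_{k}$ and $Z_{k+2}$ through the coefficient $a_{2,0}$, which no naive divisibility argument sees. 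The paper's own proof makes this concrete by eliminating $Y_k$ between \eqref{eq:Ggg1} (shifted) and the difference \eqref{eq:Fff3} of \eqref{eq:Fff2} and \eqref{eq:Eee1}, producing the recursion \eqref{imp}
\[
 a_{0,2}(2m-k-2)\,Z_{k+2} + a_{2,0}(k+2)\,Z_k
   = (k+2)(\tilde\F_{k+1}-\tilde\E_k) - k\,\tilde\G_{k+2}
   \qquad(0\le k\le m-2),
\]
whose leading coefficient $a_{0,2}(2m-k-2)$ is nonzero precisely because $a_{0,2}>0$ and $k\le m-2$. From $Z_0=Z_1=0$ one then gets $Z_2=\dots=Z_m=0$, then $Y_0,\dots,Y_{m-1}=0$ from \eqref{eq:PP} and \eqref{eq:Fff3}, and finally $X_1,\dots,X_{m+1}=0$ from \eqref{eq:Fff2} and \eqref{eq:Eee1}. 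Your alternative single-variable reduction $t=v/u$ can also be made to work, but it is not a matter of ``degree bounds leaving only the zero solution'': one must first verify (as a nontrivial cancellation) that \eqref{eq:Eee1} becomes, after substituting the other two, the purely algebraic relation
\[
 (m+1)\xi + 2mt\eta + m(a_{2,0}+2a_{1,1}t+a_{0,2}t^2)\zeta = 0,
\]
and that differentiating and eliminating $\xi,\eta$ yields the second-order ODE
\[
 (a_{0,2}t^2-a_{2,0})\zeta'' + 2a_{0,2}t\,\zeta' + m(1-m)a_{0,2}\zeta = 0,
\]
whose indicial equation at $t=\infty$ is $a_{0,2}(n+m)(n-m+1)=0$; since $\deg\zeta\le m-2$ forces $n\le m-2$, no nonzero polynomial solution exists. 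Without this computation the argument is not complete. In short: same framework and same linear system, but the invertibility step — the only nontrivial part — is asserted rather than proved, and one of the identities you lean on does not hold.
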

We prove here the case $m=2$ of the proposition:
\begin{lemma}\label{lem:2}
 Let $(u,v)$ be a local coordinate system
 satisfying  \eqref{eq:comp-E},
 \eqref{eq:comp-F} and  \eqref{eq:comp-G}.
 Then there exists
 a unique second formal solution.
 More precisely it has the following expressions:
 \begin{align*}
  \Xx_{3}&=u,\qquad
  \Yy_1=v,\\
  \Zz_2&=\frac12 (a_{2,0}u^2+2 a_{1,1}uv +a_{0,2}v^2).
 \end{align*}
\end{lemma}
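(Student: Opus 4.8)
The plan is to expand the inner products $f^{2}_{u}\cdot f^{2}_{u}$, $f^{2}_{u}\cdot f^{2}_{v}$ and $f^{2}_{v}\cdot f^{2}_{v}$ as polynomials in $(u,v)$, discard everything of order $\ge 3$, and compare the resulting quadratic polynomials with the right-hand sides of \eqref{eq:comp-E}, \eqref{eq:comp-F} and \eqref{eq:comp-G}. A preliminary observation shortens the work: since $\Yy_{1}=Y(1,0)u+Y(0,1)v$ has order $1$, the term $\beta_{3}(\Yy_{1})=(b_{3}/6)\Yy_{1}^{3}$ lies in $\O_{3}$, hence its $u$- and $v$-derivatives lie in $\O_{2}$; because $\Xx_{3}(0,0)=\Yy_{1}(0,0)=0$, the partial derivatives of the second component $\Xx_{3}\Yy_{1}+\beta_{3}(\Yy_{1})$ vanish at the origin, so the contribution of $\beta_{3}$ to each of the three inner products lies in $\O_{4}$ and is invisible modulo $\O_{3}$. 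Thus it suffices to treat $f^{2}=(\Xx_{3},\Xx_{3}\Yy_{1},\Zz_{2})$.

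First I would match linear parts. By \eqref{eq:comp-E}--\eqref{eq:comp-G} the series $\E$, $\F$, $\G$ have no linear part, so the linear parts of $[f^{2}_{u}\cdot f^{2}_{u}]$ and $[f^{2}_{u}\cdot f^{2}_{v}]$ vanish; a direct expansion gives these linear parts as $2X(2,0)u+2X(1,1)v$ and, after using $X(1,1)=0$, as $X(0,2)v$, whence $X(2,0)=X(1,1)=X(0,2)=0$, that is, $\Xx_{3}=u$ up to cubic terms. Next I would match quadratic parts. With the vanishing above, the quadratic part of $[f^{2}_{v}\cdot f^{2}_{v}]$ equals $\bigl(Y(0,1)^{2}+Z(1,1)^{2}\bigr)u^{2}+2Z(1,1)Z(0,2)\,uv+Z(0,2)^{2}v^{2}$; comparison with \eqref{eq:comp-G} gives $Z(0,2)^{2}=a_{0,2}^{2}$, hence $Z(0,2)=a_{0,2}$ (both being positive), then $Z(1,1)=a_{1,1}$, then $Y(0,1)^{2}=1$, hence $Y(0,1)=1$ (being positive). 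Substituting these values into the quadratic parts of $[f^{2}_{u}\cdot f^{2}_{v}]$ and $[f^{2}_{u}\cdot f^{2}_{u}]$ and comparing with \eqref{eq:comp-F} and \eqref{eq:comp-E}, the six equations obtained force, in turn, $X(0,3)=0$ and $X(1,2)=0$, then $Z(2,0)=a_{2,0}$, then the two relations $X(2,1)=-2Y(1,0)$ and $X(2,1)=-4Y(1,0)$ (so $Y(1,0)=0$ and $X(2,1)=0$), and finally $X(3,0)=0$. Hence any second formal solution must coincide with the triple in the statement, which gives uniqueness.

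It remains to check existence, which I would do by direct substitution: with $\Xx_{3}=u$, $\Yy_{1}=v$ and $\Zz_{2}=\tfrac12\bigl(a_{2,0}u^{2}+2a_{1,1}uv+a_{0,2}v^{2}\bigr)$ one has $f^{2}=\bigl(u,\,uv+(b_{3}/6)v^{3},\,\Zz_{2}\bigr)$ with $f^{2}_{u}=(1,\,v,\,a_{2,0}u+a_{1,1}v)$ and $f^{2}_{v}=\bigl(0,\,u+(b_{3}/2)v^{2},\,a_{1,1}u+a_{0,2}v\bigr)$, and one reads off that $f^{2}_{u}\cdot f^{2}_{u}$, $f^{2}_{u}\cdot f^{2}_{v}$ and $f^{2}_{v}\cdot f^{2}_{v}$ agree modulo $\O_{3}$ with the right-hand sides of \eqref{eq:comp-E}, \eqref{eq:comp-F} and \eqref{eq:comp-G}; the normalizations $X(0,0)=Y(0,0)=0$, $X(1,0)=1$, $Z(0,0)=Z(1,0)=Z(0,1)=0$ are built in, and $Y(0,1)=1>0$, $Z(0,2)=a_{0,2}>0$. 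The only delicate point is the bookkeeping of which unknown first enters each inner product at each order once the products are truncated modulo $\O_{3}$; there is no conceptual obstacle, and this computation also serves as the model for the inductive step in Proposition~\ref{prop:key}.
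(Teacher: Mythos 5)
Your proof is correct and follows essentially the same route as the paper's: eliminate $X(2,0)=X(1,1)=X(0,2)=0$ from the linear parts, then solve the nine quadratic relations in the order $Z(0,2)$, $Z(1,1)$, $Y(0,1)$, $X(0,3)$, $X(1,2)$, $Z(2,0)$, $X(2,1)$, $Y(1,0)$, $X(3,0)$, with your explicit existence check a welcome extra that the paper leaves implicit. One small slip in the preliminary remark: for a general candidate $(\beta_3(\Yy_1))_u\in\O_2$ while the rest of the $u$-derivative of the second component lies in $\O_1$, so the $\beta_3$-contribution to the inner products is only guaranteed to lie in $\O_3$ rather than $\O_4$ (as your own existence check shows, e.g.\ the $(b_3/2)v^3$ term in $f^2_u\cdot f^2_v$) --- but it is still negligible modulo $\O_3$, so nothing is affected.
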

\begin{proof}
 By a straightforward calculation
 using $b(0)=b'(0)=b''(0)=0$, 
 we have
 \[
    [f_u\cdot f_u]=
     1+2 u X(2,0)+2 v X(1,1)+O_2(u,v).
 \]
 Since 
 $1=\E= [f_u\cdot f_u]+O_2(u,v)$,
 we can conclude that
 $X(2,0)=X(1,1)=0$.
 Similarly, using $z_u(0,0)=z_v(0,0)=0$,
 we have
 \[
   0= \F=  [f_u\cdot f_v]=
     vX(0,2)+O_2(u,v).
 \]
 In particular, $X(0,2)=0$.
 Using the fact 
 $X(j,k)=0$ ($j+k=2$), we have
 \begin{multline*}
  [f_u\cdot f_u]=
      1+u^2 \left(X(3,0)+4 Y(1,0)^2+Z(2,0)^2\right) \\
        +2 u v \left(X(2,1)+2 Y(0,1) Y(1,0) +Z(1,1)Z(2,0)\right)\\
        +v^2 \left(X(1,2)+Y(0,1)^2+Z(1,1)^2\right) +O_3(u,v).
 \end{multline*}
 By \eqref{eq:E0}, we have
 \begin{align}
  &X(3,0)+4 Y(1,0)^2+Z(2,0)^2=a_{2,0}^2, 
  \label{eq:1-1} 
  \\
  &X(2,1)+2 Y(0,1) Y(1,0) +Z(1,1)Z(2,0)=a_{2,0}a_{1,1},
  \label{eq:1-2} 
  \\
  &X(1,2)+Y(0,1)^2+Z(1,1)^2=1+a_{1,1}^2.
  \label{eq:1-3} 
 \end{align}
 By \eqref{eq:F0},
 we have
 \begin{align}
  &\frac{1}{2} X(2,1)+2 Y(0,1)Y(1,0)
   +Z(1,1) Z(2,0)=a_{2,0}a_{1,1}, 
  \label{eq:2-1}
  \\
  &X(1,2)+Y(0,1)^2+Z(1,1)^2+Z(0,2) Z(2,0)=
   1+a_{1,1}^2+a_{2,0}a_{0,2}, 
  \label{eq:2-2}
  \\
  &X(0,3)+2 Z(0,2) Z(1,1)=2a_{1,1}a_{0,2}.
  \label{eq:2-3}
 \end{align}
 Similarly, \eqref{eq:G0} yields
 \begin{align}
  \label{eq:3-1}
  &Y(0,1)^2+Z(1,1)^2=1+a_{1,1}^2,\\
  \label{eq:3-2}
  &Z(0,2) Z(1,1)=a_{1,1}a_{0,2},\\
  \label{eq:3-3}
  & Z(0,2)^2=a_{0,2}^2.
 \end{align}
 Since $Z(0,2)$ and $a_{0,2}$ are positive 
 (cf.\ Fact \ref{fact:hhnsuy} and \eqref{eq:bz-init}), 
 \eqref{eq:3-3} reduces to
 $Z(0,2)=a_{0,2}$.
 Then \eqref{eq:3-2} yields that
 $Z(1,1)=a_{1,1}$.
 Moreover, 
 \eqref{eq:3-1} reduces to
 $Y(0,1)=1$ because of $Y(0,1)>0$
 (cf.\ \eqref{eq:2nd-N}).
 On the other hand,
 \eqref{eq:2-3} implies
 $X(0,3)=0$.
 Also $X(1,2)=0$ follows from
 \eqref{eq:1-3}.
 Then \eqref{eq:2-2}
 yields $Z(2,0)=a_{2,0}$.
 Finally, \eqref{eq:1-2} and
 \eqref{eq:2-1} reduce to
 \[
    X(2,1)+2Y(1,0)=0,\quad
    X(2,1)+4Y(1,0)=0.
 \]
 So we have $X(2,1)=Y(1,0)=0$.
 Moreover, \eqref{eq:1-1} yields $X(3,0)=0$.
\end{proof}

Now we can prove 
Theorem \ref{thm:main} under the assumption that 
Proposition \ref{prop:key} is proved:
\begin{proof}[Proof of Theorem \ref{thm:main}]
 By Proposition \ref{prop:key}, we get formal
 power series $X$, $Y$, $Z\in \R[[u,v]]$ such that
 $F:=(X,XY+b(Y),Z)$ satisfies
 \begin{equation}\label{eq:EFG}
  \E=[F_u\cdot F_u], \quad
  \F=[F_u\cdot F_v],\quad
  \G=[F_v\cdot F_v]. 
 \end{equation}
 In fact, the coefficients $b_j$ ($j\ge m+1$) do not
 affect our computation, and
 \begin{align*}
  [F_u\cdot F_u]&=[F^m_u\cdot F^m_u]+O_{m+1}(u,v), \\
  [F_u\cdot F_v]&=[F^m_u\cdot F^m_v]+O_{m+1}(u,v), \\
  [F_v\cdot F_v]&=[F^m_v\cdot F^m_v]+O_{m+1}(u,v)
 \end{align*}
 hold, where $O_{m+1}(u,v)$ is a term in $\O_{m+1}$.
 Then by Borel's theorem, there exist $C^\infty$ functions
 $x$, $y$, $z$ whose Taylor series are $X$, $Y$, $Z$, respectively.
 So we set
 \[
 f(u,v):=\left(
              x(u,v),
              x(u,v)y(u,v)+b\bigl(y(u,v)\bigr),
              z(u,v)\right),
 \]
 then the first  fundamental form of $f$ is formally
 isometric to $d\sigma^2$.
 By Lemma \ref{lem:2},  the map $(u,v)\mapsto (x(u,v),y(u,v))$
 is a local diffeomorphism at the origin.
 Taking $(x,y)$ as a new local coordinate system,
 we can write $u=u(x,y)$ and $v=v(x,y)$.
 So $(x,y)$ gives the canonical coordinate system of
 the map $f$.
 Thus $f$ satisfies (1) and (2) of  Theorem \ref{thm:main}.
\end{proof}

\section{Properties of power series}\label{sec:Pw}
In this section,  we prepare  several properties of  power series 
to prove the case $m\ge 3$ of
Proposition \ref{prop:key}.
As in Section~\ref{sec:prelim}, we denote by
$\R[[u,v]]$ the ring of formal power series
with two variables $u,v$ in real coefficients.
Each element of $\R[[u,v]]$ can be written
as in \eqref{eq:P}. Each $P(k,l)$ ($k,l\ge 0$)
is called the \emph{$(k,l)$-coefficient}
of the power series $P$.
Moreover, the sum $k+l$ is called the
\emph{order} of the coefficient $P(k,l)$.
In particular, 
$P(k,l)$ $(k+l=m)$
consist of all coefficients of order $m$.
The formal partial derivatives of $P$ denoted by
\[
   P_u:=\partial P/\partial u,\qquad
   P_v:=\partial P/\partial v
\]
are defined in the usual manner.

\begin{lemma}\label{lem:diff}
 The $(k,l)$-coefficient of the {\rm(}formal{\rm)} partial
 derivatives $P_u$ and $P_v$ of $P$
 are given by
 \[
    P_u(k,l)=P(k+1,l),\qquad P_v(k,l)=P(k,l+1).
 \]
\end{lemma}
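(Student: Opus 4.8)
The plan is to verify both identities directly by differentiating the defining series \eqref{eq:P} term by term. Writing $P=\sum_{k,l\ge 0}\frac{P(k,l)}{k!\,l!}u^kv^l$, the formal partial derivative $P_u$ is, by definition, $\sum_{k\ge 1,\,l\ge 0}\frac{P(k,l)}{k!\,l!}\,k\,u^{k-1}v^l$, where the $l$-sum is unaffected and the term with $k=0$ drops out. Using the elementary identity $k/k!=1/(k-1)!$, this rewrites as $\sum_{k\ge 1,\,l\ge 0}\frac{P(k,l)}{(k-1)!\,l!}u^{k-1}v^l$.

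Next I would reindex the sum by setting $k'=k-1$, which gives $P_u=\sum_{k',l\ge 0}\frac{P(k'+1,l)}{k'!\,l!}u^{k'}v^l$. Comparing this with the normalized expansion $P_u=\sum_{k,l\ge 0}\frac{P_u(k,l)}{k!\,l!}u^kv^l$ of $P_u$ as an element of $\R[[u,v]]$, and invoking the uniqueness of the coefficients of a formal power series, one reads off $P_u(k,l)=P(k+1,l)$. The computation for $P_v$ is verbatim the same with the roles of $u$ and $v$ interchanged (the $k$-sum untouched, the $l=0$ term dropping out, and $l/l!=1/(l-1)!$ used), yielding $P_v(k,l)=P(k,l+1)$.

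The only point requiring any care is the factorial normalization built into the convention \eqref{eq:P}: it is precisely this normalization that makes the index shift clean, leaving no stray combinatorial factor. There is no genuine obstacle here; the statement is recorded only because this ``shift of indices'' description of formal differentiation is the form in which the operation will be used throughout the coefficient bookkeeping in Section~\ref{sec:Pw} and in the proof of Proposition~\ref{prop:key}.
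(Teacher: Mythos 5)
Your argument is correct, and it is the natural one: term-by-term differentiation, the identity $k/k!=1/(k-1)!$, a shift of index, and uniqueness of formal power series coefficients. The paper states Lemma~\ref{lem:diff} without any proof (treating it as immediate from the normalization in \eqref{eq:P}), so there is nothing to compare against; your write-up simply supplies the routine verification the authors chose to omit.
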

Linear operations on power series
also have a simple description as follows:
\begin{lemma}\label{lem:linear}
 Let $P$, $Q$ be two power series in  $\R[[u,v]]$,
 and let $\alpha$, $\beta\in \R$.
 Then 
 \[
   (\alpha P+\beta Q)(k,l)=\alpha P(k,l)+\beta Q(k,l).
 \]
\end{lemma}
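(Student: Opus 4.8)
The plan is to prove the identity directly from the definition of the $\R$-vector-space (indeed ring) structure on $\R[[u,v]]$. First I would recall that, by \eqref{eq:P}, every element $P\in\R[[u,v]]$ is written uniquely as $P=\sum_{k,l\ge 0}\frac{P(k,l)}{k!l!}u^k v^l$, so that $P\mapsto (P(k,l))_{k,l\ge 0}$ is a bijection from $\R[[u,v]]$ onto the set of real-valued families indexed by pairs of non-negative integers. Under this identification the addition and the $\R$-scalar multiplication on $\R[[u,v]]$ are, by definition, carried out coefficient-wise, which is exactly the assertion to be proved.

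If one prefers to argue from the uniqueness of the representation \eqref{eq:P} rather than from the definition of the operations, I would proceed as follows. Write $P=\sum_{k,l\ge 0}\frac{P(k,l)}{k!l!}u^kv^l$ and $Q=\sum_{k,l\ge 0}\frac{Q(k,l)}{k!l!}u^kv^l$. Then
\[
  \alpha P+\beta Q
  =\sum_{k,l\ge 0}\Bigl(\alpha\frac{P(k,l)}{k!l!}+\beta\frac{Q(k,l)}{k!l!}\Bigr)u^kv^l
  =\sum_{k,l\ge 0}\frac{\alpha P(k,l)+\beta Q(k,l)}{k!l!}\,u^kv^l .
\]
Comparing the right-hand side with the canonical expansion \eqref{eq:P} of $\alpha P+\beta Q$ — which is legitimate since that expansion is unique — gives $(\alpha P+\beta Q)(k,l)=\alpha P(k,l)+\beta Q(k,l)$ for every pair $(k,l)$, as claimed.

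There is essentially no obstacle here: the statement merely records that each coefficient functional $P\mapsto P(k,l)$ is $\R$-linear, which is precisely why the normalization by $1/(k!l!)$ in \eqref{eq:P} was chosen so as not to interfere with linear operations. The one point to keep an eye on is that the factor $1/(k!l!)$ attached to $u^kv^l$ is the same for $P$, for $Q$, and for $\alpha P+\beta Q$, so it cancels uniformly in the computation above; this is immediate from the definition \eqref{eq:P}. Together with Lemma~\ref{lem:diff}, this lemma will later allow us to convert linear relations among the metric coefficients $E$, $F$, $G$ and the terms of $f_u\cdot f_u$, $f_u\cdot f_v$, $f_v\cdot f_v$ into linear relations among their Taylor coefficients.
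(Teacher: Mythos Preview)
Your proof is correct. The paper itself states this lemma without proof, regarding it as an immediate consequence of the definition of $\R[[u,v]]$; your direct verification from the expansion \eqref{eq:P} is exactly the kind of argument one would supply if asked to fill in the details.
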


The coefficient formula for products 
is as follows:
\begin{lemma}
\label{formula:power-prod}
 Let $P_1,\dots,P_N$ be power series in
 $\R[[u,v]]$. Then
 \begin{equation}\label{eq:m-prod}
    (P_1\cdots P_N)(k,l) = 
     k!l!
     \sum_{\substack{s_1+\dots+s_{N}=k,\\t_1+\dots+t_{N}=l}}
           \frac{P_1(s_1,t_1)\cdots P_s(s_{N},t_{N})}{
              s_1!t_1!\cdots s_{N}! t_{N}!}.
 \end{equation}
\end{lemma}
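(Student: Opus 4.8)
The statement to be proved is Lemma~\ref{formula:power-prod}, the coefficient formula for products of formal power series. Here is how I would approach it.

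\medskip

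The plan is to prove the formula by induction on $N$, reducing everything to the case $N=2$, which is nothing but the Cauchy product formula written in the normalization $P=\sum P(k,l)u^kv^l/(k!l!)$. First I would handle $N=2$ directly: writing $P_1=\sum_{s,t} P_1(s,t)u^sv^t/(s!t!)$ and $P_2=\sum_{s',t'} P_2(s',t')u^{s'}v^{t'}/(s'!t'!)$, the $(k,l)$-coefficient of the product in the chosen normalization is, by Definition~\ref{def:order} applied to $P_1P_2$, equal to $k!l!$ times the coefficient of $u^kv^l$ in the ordinary sense. Collecting terms with $s+s'=k$ and $t+t'=l$ gives exactly
\[
 (P_1P_2)(k,l)=k!l!\sum_{\substack{s+s'=k\\ t+t'=l}}\frac{P_1(s,t)}{s!t!}\cdot\frac{P_2(s',t')}{s'!t'!},
\]
which is \eqref{eq:m-prod} for $N=2$. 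This step is a routine bookkeeping check and I would not expand it in detail.

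\medskip

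For the inductive step, I would assume the formula holds for $N-1$ factors and write $P_1\cdots P_N=(P_1\cdots P_{N-1})\cdot P_N$. Applying the $N=2$ case to the two factors $Q:=P_1\cdots P_{N-1}$ and $P_N$, and then substituting the inductive expression for $Q(s,t)$, one gets a double sum: an outer sum over $s+s_N=k$, $t+t_N=l$ and an inner sum (coming from $Q(s,t)$) over $s_1+\dots+s_{N-1}=s$, $t_1+\dots+t_{N-1}=t$. Merging the two summations into a single sum over all $(s_1,\dots,s_N)$ with $\sum s_i=k$ and all $(t_1,\dots,t_N)$ with $\sum t_i=l$, one must check that the factorial prefactors collapse correctly: the $s!t!$ appearing in the $N=2$ formula as $Q(s,t)/(s!t!)$ cancels against the $s!t!$ produced by the inductive formula for $Q(s,t)$, leaving precisely the denominator $s_1!t_1!\cdots s_N!t_N!$ and the single prefactor $k!l!$. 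That yields \eqref{eq:m-prod} and closes the induction.

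\medskip

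Honestly, there is no serious obstacle here; the only thing to be careful about is the telescoping of the factorial factors in the inductive step, since the normalization with $1/(k!l!)$ is chosen precisely so that the product formula has the clean multinomial-type denominator, and one must verify the cancellation is exact rather than off by a combinatorial factor. An alternative, perhaps cleaner, presentation would be to avoid induction altogether: expand each $P_i$ in the normalized form, multiply all $N$ series formally, collect the coefficient of $u^kv^l$, and multiply by $k!l!$ as dictated by Definition~\ref{def:order}; the condition $s_1+\dots+s_N=k$, $t_1+\dots+t_N=l$ then arises immediately from matching exponents, and the denominator $s_1!t_1!\cdots s_N!t_N!$ is simply the product of the individual normalization factors. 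I would likely present this direct argument as the main proof and mention the inductive route as a remark, since both are short.
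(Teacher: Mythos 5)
Your proof is correct. The paper actually states Lemma~\ref{formula:power-prod} without any proof at all, treating it as a routine consequence of the Cauchy product formula in the normalization $P = \sum P(k,l)u^kv^l/(k!l!)$; the paper merely records the $N=2$ specialization \eqref{eq:2-prod} and Corollary~\ref{cor:power-prod} afterwards. Both routes you sketch (induction on $N$ reducing to the $N=2$ Cauchy product, or the direct expansion with matching of exponents) are valid and fill this gap correctly; the crucial bookkeeping point you flag — that the $s!t!$ in $Q(s,t)/(s!t!)$ cancels against the $s!t!$ produced by the inductive formula for $Q(s,t)$, leaving the single prefactor $k!l!$ and the denominator $s_1!t_1!\cdots s_N!t_N!$ — is exactly the nontrivial check, and it does go through. (One tiny remark: the typo $P_s(s_N,t_N)$ in the paper's statement should of course read $P_N(s_N,t_N)$, and your proof implicitly uses this corrected reading.)
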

If $N=2$, and $P_1=P$ and $P_2=Q$, then
the formula \eqref{eq:m-prod}
reduces to the 
following:
\begin{equation}\label{eq:2-prod}
 (PQ)(k,l):=
  k!l!
  \sum_{s=0}^{k}\sum_{t=0}^{l}
  \frac{P(s,t)Q(k-s,l-t)}{s!t!(k-s)!(l-t)!}.
\end{equation}
Moreover, setting $Q$ to be the monomial
$u$ or $v$, we get the following:
\begin{corollary}\label{cor:power-prod}
 \[
    (u P)(k,l) = kP(k-1,l),\quad
    (v P)(k,l)=lP(k,l-1)
 \]
 hold, where coefficients with negative induces
 $P(-k,l)$, $P(m,-n)$ $(k,n>0, l,m\in \Z)$
 are considered as $0$.
\end{corollary}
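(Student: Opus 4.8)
The plan is to deduce the corollary directly from the product formula, in the specialized form \eqref{eq:2-prod}, by taking the second factor to be a monomial. First I would observe that, viewed as an element of $\R[[u,v]]$ written in the normalized form \eqref{eq:P}, the monomial $u$ has coefficients $u(1,0)=1$ and $u(k,l)=0$ for all $(k,l)\ne(1,0)$; likewise the monomial $v$ has $v(0,1)=1$ and $v(k,l)=0$ otherwise. This is immediate from comparing $u=\sum (u(k,l)/k!l!)\,u^kv^l$ with the literal expression $u=u^1v^0$, and similarly for $v$.

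Next I would substitute $Q=u$ into \eqref{eq:2-prod}. Every summand carrying a factor $u(s,t)$ with $(s,t)\ne(1,0)$ vanishes, so the double sum collapses to the single term $s=1$, $t=0$, which occurs precisely when $k\ge 1$. For $k\ge 1$ this gives
\[
  (uP)(k,l)=k!\,l!\,\frac{u(1,0)\,P(k-1,l)}{1!\,0!\,(k-1)!\,l!}
  =k\,P(k-1,l),
\]
while for $k=0$ there is no admissible index and $(uP)(0,l)=0$, which agrees with $kP(k-1,l)=0\cdot P(-1,l)$ under the stated convention that coefficients with a negative index are set to $0$. The computation of $(vP)(k,l)=lP(k,l-1)$ is entirely parallel, using $Q=v$.

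Since everything reduces to a one-line manipulation of factorials once the coefficients of the monomials $u$ and $v$ are identified, there is no substantial obstacle; the only point requiring a little care is the boundary case $k=0$ (resp.\ $l=0$), which is exactly what the bracketed negative-index convention is designed to absorb. As an alternative that makes this transparent, one could bypass \eqref{eq:2-prod} entirely and argue directly: multiplying $P=\sum (P(k,l)/k!l!)u^kv^l$ by $u$ and re-indexing $k\mapsto k+1$ yields $uP=\sum_{k\ge 1}(P(k-1,l)/(k-1)!l!)u^kv^l$, and matching this against $uP=\sum ((uP)(k,l)/k!l!)u^kv^l$ gives the claim, with the $k=0$ coefficient equal to $0$ as required.
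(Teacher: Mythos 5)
Your proof is correct and is precisely the route the paper intends: the corollary is stated immediately after \eqref{eq:2-prod} with the remark ``setting $Q$ to be the monomial $u$ or $v$, we get the following,'' and you have simply carried out that substitution, identified the single surviving term in the convolution, and checked the boundary case via the negative-index convention. (A tiny labeling slip: in the convention of \eqref{eq:2-prod} the surviving index is $s=k-1$, $t=l$ rather than $s=1$, $t=0$, but by commutativity of the product this does not affect the computation.)
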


In Definition \ref{def:order},
we defined the ideal
$\O_m$ of $\R[[u,v]]$
consisting of formal power series of order at least $m$.
The following assertion is obvious:
\begin{lemma}\label{lem:order-sum}
 If $P\in \O_n$ and
 $Q\in \O_m$, then
 $P+Q\in \O_r$,
 where $r=\min\{n,m\}$.
\end{lemma}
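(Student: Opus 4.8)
The plan is to simply unwind Definition~\ref{def:order} and then invoke the linearity of coefficient extraction recorded in Lemma~\ref{lem:linear}. Set $r=\min\{n,m\}$ and let $(k,l)$ be an arbitrary pair of nonnegative integers with $k+l<r$. Then $k+l<n$ and $k+l<m$ hold simultaneously, so the hypotheses $P\in\O_n$ and $Q\in\O_m$ give $P(k,l)=0$ and $Q(k,l)=0$ respectively. Applying Lemma~\ref{lem:linear} with $\alpha=\beta=1$ yields $(P+Q)(k,l)=P(k,l)+Q(k,l)=0$. Since every coefficient of $P+Q$ of order strictly less than $r$ vanishes, $P+Q$ is of order at least $r$ by definition, that is, $P+Q\in\O_r$, which is the assertion.

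There is essentially no obstacle here; the lemma is immediate. The only point requiring a moment's care is the direction of the inequalities: one obtains $\O_r$ with $r$ the \emph{minimum} of $n$ and $m$, not the maximum, because a given coefficient $(P+Q)(k,l)$ is forced to vanish only in the range where \emph{both} vanishing hypotheses apply, namely $k+l<n$ and $k+l<m$. If one of $P,Q$ happened to lie in the strictly larger ideal, the sum need not, so the exponent $r=\min\{n,m\}$ is sharp in general and the statement is stated in its strongest correct form. (In the special case $n=m$ one recovers the fact that each $\O_m$ is closed under addition, consistent with its being an ideal of $\R[[u,v]]$.)
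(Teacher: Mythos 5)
Your argument is correct and is exactly the routine unwinding of Definition~\ref{def:order} together with Lemma~\ref{lem:linear}; the paper itself simply labels the lemma as obvious and gives no proof, so your write-up matches the intended (immediate) reasoning.
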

Let $P_j$ ($j=1,\dots,N$)
be a power series in
$\O_{n_j}$. 
For a given non-negative integer $m$,
we set
\[
   \innner{P_j}{P_1\cdots P_N}_m:=
               m-\sum_{k\ne j} n_k.
\]
Roughly speaking, this number is an 
upper bound of the degree of 
the terms of $P_j$
to compute the $m$-th order term of
$P_1\cdots P_N$, as follows.

\begin{proposition}\label{lem:order-prod}
 Let $P_j$ be a power 
 series in $\O_{n_j}$ $(j=1,\dots,N)$. Then 
 the product $P_1\cdots P_N$
 belongs to the class 
 $\O_{n_1+\cdots+n_N}$.
 Moreover, each $(k,l)$-coefficient
 $(P_1\cdots P_N)(k,l)$ $(k+l=m)$
 of order $m$ can be written 
 as a 
 homogeneous polynomial of 
 degree $N$ in the variables
 generated by
 \[
   \prod_{i=1}^N P_i(a_i,b_i) \qquad 
   \biggl(n_i\le a_i+b_i\le \innner{P_i}{P_1\cdots P_N}_m\biggr).
 \]
\end{proposition}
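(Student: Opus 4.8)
The plan is to prove Proposition~\ref{lem:order-prod} by induction on $N$, exploiting the product formula \eqref{eq:m-prod} (equivalently \eqref{eq:2-prod}) together with Corollary~\ref{cor:power-prod} and Lemma~\ref{lem:order-sum}. First I would dispose of the easy part: if $P_i(a_i,b_i)=0$ whenever $a_i+b_i<n_i$, then in the sum \eqref{eq:m-prod} every surviving term has $\sum_i(s_i+t_i)\geq \sum_i n_i$, so $(P_1\cdots P_N)(k,l)=0$ for $k+l<n_1+\cdots+n_N$; hence $P_1\cdots P_N\in\O_{n_1+\cdots+n_N}$. This is immediate from the formula and needs no induction.

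For the structural statement about the $(k,l)$-coefficient of order $m=k+l$, I would look again at \eqref{eq:m-prod}: it expresses $(P_1\cdots P_N)(k,l)$ as a sum of monomials $\prod_{i=1}^N P_i(s_i,t_i)$ over index tuples with $\sum_i s_i=k$, $\sum_i t_i=l$. By the first part we may restrict to tuples with $s_i+t_i\geq n_i$ for every $i$ (other terms vanish). Then for each fixed index $j$, summing the constraints over $i\neq j$ gives $\sum_{i\neq j}(s_i+t_i)\geq\sum_{i\neq j}n_i$, and combining with $\sum_i(s_i+t_i)=m$ yields
\[
  s_j+t_j \;\leq\; m-\sum_{i\neq j}n_i \;=\; \innner{P_j}{P_1\cdots P_N}_m,
\]
which is exactly the claimed upper bound on the order of the arguments $P_j(a_j,b_j)$ that can appear. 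Since each summand is a product of exactly $N$ factors $P_i(\cdot,\cdot)$, the whole expression is a homogeneous polynomial of degree $N$ in these quantities. This essentially gives a direct proof without induction, but I would phrase it carefully so that the combinatorial bookkeeping — that only finitely many tuples contribute and that the bounds $n_i\leq a_i+b_i\leq\innner{P_i}{P_1\cdots P_N}_m$ hold simultaneously for all $i$ — is transparent; that simultaneous-bounds step is the only place a careless argument could slip, since one must use the vanishing of low-order coefficients of the \emph{other} factors to bound the order of each given factor.

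Alternatively, if one prefers an inductive proof (which localizes the computation to binary products and may read more cleanly), the base case $N=1$ is trivial, and for the inductive step one writes $P_1\cdots P_N=(P_1\cdots P_{N-1})P_N$, applies \eqref{eq:2-prod} with $P=P_1\cdots P_{N-1}\in\O_{n_1+\cdots+n_{N-1}}$ and $Q=P_N\in\O_{n_N}$, and then substitutes the inductive description of the coefficients of $P$. The bound $s+t\le m-n_N$ for the first factor comes directly from $Q$ being of order $\geq n_N$, and feeding this into $\innner{P_i}{P_1\cdots P_{N-1}}_{s+t}\le\innner{P_i}{P_1\cdots P_N}_m$ (using $\sum_{i\le N-1, i\neq j}n_i + n_N=\sum_{i\neq j}n_i$) recovers the desired bounds; homogeneity of degree $N$ follows from degree $N-1$ times degree $1$. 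The main obstacle in either route is purely notational: keeping the index constraints straight so that the inequality $a_i+b_i\leq\innner{P_i}{P_1\cdots P_N}_m$ is seen to follow uniformly from the order hypotheses on the remaining factors. I expect no genuine analytic difficulty; the content is entirely the combinatorics of the multi-index convolution already encoded in Lemma~\ref{formula:power-prod}.
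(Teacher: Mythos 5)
The paper does not actually give a proof of this proposition — it states immediately afterward that ``The proof of this assertion is not so difficult, and so we leave it as an exercise.'' Your direct argument via Lemma~\ref{formula:power-prod} (restrict the convolution sum to tuples with $s_i+t_i\geq n_i$, then bound each $s_j+t_j$ from above using $\sum_i(s_i+t_i)=m$) is correct, complete, and is clearly the argument the authors had in mind, since the multi-index product formula is set up in the lemma immediately preceding; the inductive variant you sketch would work just as well but buys nothing extra here.
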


The proof of this assertion is not so difficult,
and so we leave it as an exercise.
As a consequence, we get the following:
\begin{corollary}\label{cor:pi}
 Let $\pi_m:\R[[u,v]]\to \R[[u,v]]/\O_{m+1}$ be
 the canonical homomorphism.
 The  $m$-th order term of $P_1\cdots P_N$
 are determined by 
 \[
    \pi_{m_1^{}}(P_1),\dots,\pi_{m_N^{}}(P_N),
 \]
 where
 \[
    m_i^{}:=\innner{P_i}{P_1\cdots P_N}_m \qquad
    (i=1,\dots,N).
 \]
 In other words, to compute
 the $m$-th order terms of 
 $P_1\dots P_N$, we need only the at most
 degree $m_i$ terms of $P_i$. 
\end{corollary}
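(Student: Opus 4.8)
The plan is to deduce the corollary almost immediately from Proposition \ref{lem:order-prod}; the only thing that really needs to be said is what information about $P_i$ the truncation $\pi_{m_i}(P_i)$ encodes. First I would recall the elementary description of the maps $\pi_r$: for a non-negative integer $r$ the kernel of $\pi_r\colon\R[[u,v]]\to\R[[u,v]]/\O_{r+1}$ is $\O_{r+1}$, i.e.\ the series all of whose coefficients of order $\le r$ vanish. Hence two power series have the same image under $\pi_r$ if and only if their $(a,b)$-coefficients agree for every $a+b\le r$; equivalently, knowing $\pi_r(P)$ is the same as knowing the finite collection $\{P(a,b)\}_{a+b\le r}$. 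I would also fix the meaning of ``the $m$-th order term of $P_1\cdots P_N$'' as the degree-$m$ homogeneous part $\sum_{k+l=m}\frac{(P_1\cdots P_N)(k,l)}{k!l!}u^kv^l$, so that determining this term is the same as determining each coefficient $(P_1\cdots P_N)(k,l)$ with $k+l=m$.

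Next I would dispose of the degenerate case. If $m<n_1+\dots+n_N$, then by the first assertion of Proposition \ref{lem:order-prod} we have $P_1\cdots P_N\in\O_{n_1+\dots+n_N}\subset\O_{m+1}$, so the $m$-th order term of the product vanishes and is trivially determined by anything. So I may assume $m\ge n_1+\dots+n_N$, in which case for each $i$
\[
   m_i=\innner{P_i}{P_1\cdots P_N}_m=m-\sum_{k\ne i}n_k\ge n_i ,
\]
so the index range $n_i\le a_i+b_i\le m_i$ occurring in Proposition \ref{lem:order-prod} is non-empty.

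Then I would apply Proposition \ref{lem:order-prod} directly: each coefficient $(P_1\cdots P_N)(k,l)$ with $k+l=m$ is a homogeneous polynomial of degree $N$ in the quantities $\prod_{i=1}^N P_i(a_i,b_i)$ subject to $n_i\le a_i+b_i\le m_i$. In particular this coefficient is a function of the coefficients $P_i(a_i,b_i)$ with $a_i+b_i\le m_i$ alone, for $i=1,\dots,N$. By the first step, the data $\{P_i(a_i,b_i)\}_{a_i+b_i\le m_i}$ is exactly $\pi_{m_i}(P_i)$. Summing over $k+l=m$, the $m$-th order term of $P_1\cdots P_N$ is therefore determined by the tuple $(\pi_{m_1}(P_1),\dots,\pi_{m_N}(P_N))$, which is the claim; the final rephrasing (``we need only the at most degree $m_i$ terms of $P_i$'') is just the translation of ``$\pi_{m_i}(P_i)$'' into the language of truncations.

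I do not expect any genuine obstacle here: all the combinatorial content is already packaged in Proposition \ref{lem:order-prod}, and the corollary is merely the reformulation of its index bound $\innner{P_i}{P_1\cdots P_N}_m$ via the canonical homomorphisms $\pi_{m_i}$. The only points requiring a moment's care — and the only ones I would spell out — are the degenerate case $m<n_1+\dots+n_N$ and the inequality $m_i\ge n_i$ in the remaining case, both handled above.
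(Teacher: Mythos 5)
Your argument is correct and is exactly the route the paper intends: the corollary is stated there as an immediate consequence of Proposition~\ref{lem:order-prod}, with no further proof supplied. Your write-up fills in the small points (the degenerate case $m<n_1+\dots+n_N$ and the observation $m_i\ge n_i$) that the paper implicitly leaves to the reader, but the approach is the same.
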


\begin{example}\label{eq:PQ}
 We set $P\in \O_2$ and  $Q\in \O_1$ as
 \[
  P:=
    c_{1} u v+c_{2} v^2+c_{3} u^3 v+\cdots, \quad
  Q:=
   d_1 u+d_2 v+d_{3} u^3+\cdots,
 \]
 respectively.
 Then it holds that
 \[
    \innner{P}{PQ}_3=3-1=2,\qquad 
    \innner{Q}{PQ}_3=3-2=1.
 \]
 To compute $PQ$ modulo $\O_4$, we need
 the information of $\pi_2(P)$ and $\pi_1(Q)$.
 So, we have that 
 \begin{align*}
  PQ&=(c_{1} u v+c_{2} v^2)(d_1u+d_2v)+O_4(u,v)\\
  &=(c_{1} d_2+c_{2}d_1)uv^2+c_{1} d_1u^2v+c_{2}d_2v^3
  +O_4(u,v),
 \end{align*}
 and so $PQ\in \O_3$, where $O_j(u,v)$
 is an element of $\O_j$ ($j=1,2,3,\dots$).
 The coefficients of the terms of order $3$ 
 are
 \[
    c_{1} d_2+c_{2}d_1,\qquad c_{1} d_1,
       \qquad c_{2}d_2. 
 \]
 They are homogeneous polynomials of degree $2$
 in the variables $c_{1}$, $c_{2}$, $d_1$, $d_2$.
\end{example}

Let $X_1,\dots,X_r$ be power series in $\R[[u,v]]$.
We suppose that
for each $P_i$ ($i=1,\dots,N$)
there exists a unique number $\mu_{i}\in \{1,\dots,r\}$
such that
\begin{equation}\label{eq:Pi}
    P_i=X_{\mu_{i}},\,\, (X_{\mu_{i}})_u,
        \mbox{ or }\,\, (X_{\mu_{i}})_v.
\end{equation}
For the sake of simplicity, we set
\begin{equation}\label{eq:X}
   \check X_i:=X_{\mu_{i}}.
\end{equation}
In this case, each coefficient of the 
product $P_1\cdots P_N$
can be expressed as coefficients of
$X_1,\dots,X_r$. For each non-negative integer $m$,
we set
\[
   \innner{\check X_{i}}{P_i,P_1\cdots P_N}_{m}:=
   \begin{cases}
    \innner{P_i}{P_1\cdots P_N}_{m}  & 
    \mbox{if $P_i=\check X_{i}$}, \\
    \innner{P_i}{P_1\cdots P_N}_{m}+1
    & \mbox{if $P_i=(\check X_{i})_u \mbox{ or }
 (\check X_{i})_v$}.
   \end{cases}
\]
Roughly speaking, this number is an 
upper bound of the degree of 
the terms of $\check X_{i}$ appeared in $P_i$
to compute the $m$-th order term of
$P_1\cdots P_N$. 
In fact, by applying Lemma \ref{lem:order-prod}
and Lemma \ref{lem:diff} to this situation,
we get the following:

\begin{proposition}\label{cor:order-prod}
 Under the conventions
 \eqref{eq:Pi} and \eqref{eq:X},
 each $(k,l)$-coefficient
 $(P_1\dots P_N)(k,l)$ $(k+l=m)$
 of order $m$ is
 a homogeneous polynomial of 
 degree $N$ in the variables
 generated by
 \[
    \prod_{i=1}^N \check X_{i}(b,c) \qquad 
    \biggl(0\le b+c\le \innner{\check X_{i}}{P_i,P_1\cdots P_N}_m\biggr).
 \]
\end{proposition}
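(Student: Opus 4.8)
The plan is to obtain Proposition~\ref{cor:order-prod} as a direct translation of Proposition~\ref{lem:order-prod} through the coefficient formula for formal derivatives. First I would invoke Proposition~\ref{lem:order-prod} applied to the product $P_1\cdots P_N$ itself, each $P_i$ lying in $\O_{n_i}$: it tells us that for $k+l=m$ the coefficient $(P_1\cdots P_N)(k,l)$ is a homogeneous polynomial of degree $N$ whose monomials have the shape $\prod_{i=1}^N P_i(a_i,b_i)$ with $n_i\le a_i+b_i\le\innner{P_i}{P_1\cdots P_N}_m$ for every $i$. The remaining task is simply to rewrite each factor $P_i(a_i,b_i)$ in terms of the coefficients of $\check X_i=X_{\mu_i}$.

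For that I would split into the three cases of \eqref{eq:Pi} and apply Lemma~\ref{lem:diff}. If $P_i=\check X_i$, then $P_i(a_i,b_i)=\check X_i(a_i,b_i)$, and by definition $a_i+b_i\le\innner{P_i}{P_1\cdots P_N}_m=\innner{\check X_i}{P_i,P_1\cdots P_N}_m$. If $P_i=(\check X_i)_u$, then Lemma~\ref{lem:diff} gives $P_i(a_i,b_i)=\check X_i(a_i+1,b_i)$; writing $(b,c):=(a_i+1,b_i)$ one gets $b+c=a_i+b_i+1\le\innner{P_i}{P_1\cdots P_N}_m+1=\innner{\check X_i}{P_i,P_1\cdots P_N}_m$. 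The case $P_i=(\check X_i)_v$ is identical after exchanging the two subscripts. In every case the new index pair satisfies $0\le b+c\le\innner{\check X_i}{P_i,P_1\cdots P_N}_m$, the lower bound being automatic since indices are non-negative.

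Plugging these identities, one per factor, into the monomial expansion from the first step turns each monomial $\prod_{i=1}^N P_i(a_i,b_i)$ into a monomial $\prod_{i=1}^N \check X_i(b_i,c_i)$ with $0\le b_i+c_i\le\innner{\check X_i}{P_i,P_1\cdots P_N}_m$, preserving the degree $N$ and the homogeneity because the substitution is applied factor by factor. That is precisely the statement. I expect no genuine obstacle: the content is entirely carried by Proposition~\ref{lem:order-prod} together with the elementary formula $P_u(k,l)=P(k+1,l)$, $P_v(k,l)=P(k,l+1)$. The only thing requiring care is to keep the extra $+1$ in the definition of $\innner{\check X_i}{P_i,P_1\cdots P_N}_m$ synchronized with the index shift produced by differentiation, so that both cases $P_i=(\check X_i)_u$ and $P_i=(\check X_i)_v$ land inside the claimed range.
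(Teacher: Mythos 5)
Your proposal is correct and matches the paper's intended argument: the paper itself gives no detailed proof, only the remark that the proposition follows ``by applying Lemma~\ref{lem:order-prod} and Lemma~\ref{lem:diff},'' and you carry out exactly that translation, using the index shift from Lemma~\ref{lem:diff} to convert each $P_i(a_i,b_i)$ into a coefficient of $\check X_i$ and checking that the shifted order stays within the bound built into the definition of $\innner{\check X_{i}}{P_i,P_1\cdots P_N}_m$.
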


\begin{corollary}
 \label{cor:order-prod2}
 The  $m$-th order terms of $P_1\cdots P_N$
 are determined by 
 \[
 \pi_{m'_1}(\check X_{1}),\,\,\dots,\,\,
 \pi_{m'_n}(\check X_{N}),
 \]
 where
 \[
  m'_i:=
    \innner{\check X_{i}}{P_i,P_1\cdots P_N}_{m}
    \qquad
    (i=1,\dots,N).
 \]
 In other words, to compute
 $P_1\cdots P_N$, we need only the at most
 degree $m'_i$ terms of $\check X_{i}$. 
\end{corollary}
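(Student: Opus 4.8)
The plan is to derive this purely as a bookkeeping consequence of Proposition~\ref{cor:order-prod}, with no new computation. By Proposition~\ref{cor:order-prod}, every coefficient of order $m$ of the product $P_1\cdots P_N$ is a polynomial expression in the quantities $\check X_{i}(b,c)$ with $0\le b+c\le m'_i$, where $m'_i=\innner{\check X_{i}}{P_i,P_1\cdots P_N}_m$. The numbers $\check X_i(b,c)$ for $b+c\le m'_i$ are precisely the coefficients of $\check X_i$ of order at most $m'_i$, i.e. exactly the data recorded by the truncation $\pi_{m'_i}(\check X_i)\in\R[[u,v]]/\O_{m'_i+1}$. Hence the $m$-th order term of $P_1\cdots P_N$ is a function of $\pi_{m'_1}(\check X_1),\dots,\pi_{m'_N}(\check X_N)$ alone, which is the assertion.

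First I would make explicit the dictionary between ``knowing $\pi_{m'}(P)$'' and ``knowing the coefficients $P(k,l)$ with $k+l\le m'$'': by the very definition of $\O_{m'+1}$ as the ideal of series of order at least $m'+1$, two series $P,Q$ have the same image in $\R[[u,v]]/\O_{m'+1}$ if and only if $P(k,l)=Q(k,l)$ for all $k+l\le m'$. Then I would invoke Proposition~\ref{cor:order-prod} to write each order-$m$ coefficient $(P_1\cdots P_N)(k,l)$ (with $k+l=m$) as a homogeneous degree-$N$ polynomial in the symbols $\prod_{i=1}^N\check X_i(b_i,c_i)$ subject to $0\le b_i+c_i\le \innner{\check X_i}{P_i,P_1\cdots P_N}_m=m'_i$. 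Since every variable occurring in that polynomial is of the form $\check X_i(b_i,c_i)$ with $b_i+c_i\le m'_i$, and since all such values are determined by $\pi_{m'_i}(\check X_i)$ by the dictionary above, the conclusion follows. Conversely, changing $\check X_i$ by a series in $\O_{m'_i+1}$ leaves all these inputs unchanged, so the order-$m$ term of the product genuinely depends only on the listed truncations.

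There is essentially no obstacle here: the content has already been done in Propositions~\ref{lem:order-prod} and~\ref{cor:order-prod}, and this corollary is just the translation of their conclusions into the language of truncation maps $\pi_{m'}$. The only point requiring a word of care—and the only place one could slip—is making sure the bound in the ``$+1$'' case of $\innner{\check X_{i}}{P_i,P_1\cdots P_N}_m$ is respected: when $P_i=(\check X_i)_u$ or $(\check X_i)_v$, the coefficient $P_i(b,c)$ equals $\check X_i(b+1,c)$ or $\check X_i(b,c+1)$ by Lemma~\ref{lem:diff}, so an order-$\innner{P_i}{\cdots}_m$ coefficient of $P_i$ is an order-$(\innner{P_i}{\cdots}_m+1)$ coefficient of $\check X_i$; this is exactly why $m'_i$ is defined with the extra $+1$, and why $\pi_{m'_i}(\check X_i)$—and not $\pi_{\innner{P_i}{\cdots}_m}(\check X_i)$—is the right amount of data to record. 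Tracking this index shift correctly is the whole of the argument.
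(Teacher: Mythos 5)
Your proof is correct and takes the same route the paper intends: Corollary~\ref{cor:order-prod2} is read off directly from Proposition~\ref{cor:order-prod} once one notes that knowing $\pi_{m'_i}(\check X_i)$ is exactly the same as knowing all coefficients $\check X_i(b,c)$ with $b+c\le m'_i$. The remark about the $+1$ index shift coming from Lemma~\ref{lem:diff} is the right point of care and is consistent with how the paper sets up the bracket $\innner{\check X_i}{P_i,P_1\cdots P_N}_m$.
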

\begin{example}
 We set
 \[
   X := x_{1} u v + x_{2} v^2 + x_{3}  u^3 v+\cdots, \quad
   Y:= y_1 u + y_2 v + y_{3} u^3+\cdots,
 \]
 and 
 $P:=X_u$, $Q:=Y_u$.
 Then,
 \[
    P=x_{1}v+3x_{3} u^2 v +\cdots,\quad
    PQ=y_1+3 y_{3}u^2 +\cdots.
 \]
 Since $P\in \O_1$ 
 and $Q\in \O_0$,
 we have
 \begin{align*}
  \innner{X}{P,PQ}_1&=
  \innner{P}{PQ}_1+1
  =(1-0)+1=2, \\
  \innner{Y}{Q,PQ}_1&=
  \innner{Q}{PQ}_1+1
  =1.
 \end{align*}
 So we have
 \begin{align*}
  PQ&=\left(\pi^{}_2(X)\right)_u
  \left(\pi^{}_1(Y)\right)_{u}+O_2(u,v)\\
  &=
  \left(x_{1} u v + x_{2} v^2\right)_u
  \left(y_1 u + y_2 v\right)_u
  +O_2(u,v)
  =x_1y_1u+O_2(u,v).
 \end{align*}
 On the other hand,
 \begin{align*}
  \innner{X}{P,PQ}_2&=
  \innner{P}{PQ}_2+1
  =(2-0)+1=3, \\
  \innner{Y}{P,PQ}_2&=
  \innner{Q}{PQ}_2+1
  =(2-1)+1=2.
 \end{align*}
 So we have
 \begin{align*}
  PQ&=\left(\pi^{}_3(X)\right)_u\
  \left(\pi^{}_2(Y)\right)_{u}+O_3(u,v)\\
  &=
  \left(x_{1} u v + x_{2} v^2 + x_{3}  u^3 v\right)_u
  \left(y_1 u + y_2 v\right)_u
  +O_3(u,v)\\
  &=x_1y_1v+O_3(u,v).
 \end{align*}
 In this case, the upper bound 
 $\innner{Y}{Q,PQ}_2=2$
 of the order of $Y$
 for the contribution of the
 order $2$ coefficients of $PQ$
 is not sharp.
 In fact, there are no order $2$ terms for 
 $PQ$.
 Also, this does not contradict 
 Proposition \ref{cor:order-prod},
 since $0$ can be considered as a homogeneous polynomial 
 of order $2$
 whose coefficients are all zero.
\end{example}

\section{The ignorable terms when determining $f^m$}
\subsection{The leading terms and ignorable  terms}
Let $x(u,v)$, $y(u,v)$, $z(u,v)$ and $b(t)$ be $C^\infty$ functions
as in Lemma \ref{lem:realize-zero},
and $P$ a polynomial in 
\[
  x, \,\, x_u,\,\, x_v\quad y, \,\, y_u,
  \,\, y_v,\quad z, \,\, z_u,\,\, z_v,\quad b'(y).
\]
Each term (cf.\ \eqref{eq:term}) 
of $(f_u\cdot f_u)$, $(f_u\cdot f_v)$ or
$(f_v\cdot f_v)$ is a typical example of such 
polynomials.
We denote by $P|_m$ the finite formal power series
in $u,v$ (i.e.\ a polynomial in $u,v$) 
that results after the substitutions
\[
  x:=\Xx_{m+1},\quad y:=\Yy_{m-1}, \quad z:=\Zz_{m},
  \quad b(y):=\beta_{m+1}(\Yy_{m-1})
\]
into $P$. 
\begin{definition}
 A term $T$ of $(f_u\cdot f_u)$, $(f_u\cdot f_v)$
 or $(f_v\cdot f_v)$ 
 is called an \emph{$m$-ignorable term}
 ($m\ge 3$)
 if each $m$-th order coefficient
 \[
    (T|_m)(j,k)\qquad (j+k=m)
 \]
 does not contain any of the top term
 coefficients 
 \begin{align*}
  &X(j,k) \qquad (j+k=m+1), \\
  &Y(j,k) \qquad (j+k=m-1),\\ 
  &Z(j,k) \qquad (j+k=m)
 \end{align*}
 of $\Xx_{m+1}$, $\Yy_{m-1}$, $\Zz_{m}$
 (cf.\ \eqref{eq:X00}, \eqref{eq:Y00} and \eqref{eq:Z00}).
 A term which is not $m$-ignorable
 is called a \emph{leading term} of order $m$.
\end{definition}
For the computation of leading terms,
we will use the following two convenient
equivalence relations:
Let $\A$ be the associative algebra
generated by 
\[
   X(j,k),\quad Y(j,k),\quad Z(j,k) \qquad (j,k=0,1,2,\dots).
\]
We denote by $\A_m$ the ideal of $\A$
generated by
\begin{align*}
 &X(j,k) \qquad (j+k\le m+1), \\
 &Y(j,k) \qquad (j+k\le m-1),\\ 
 &Z(j,k) \qquad (j+k\le m).
\end{align*}
If two elements $\delta_1$, $\delta_2\in \A$
satisfy $\delta_1-\delta_2\in \A_{m-1}$, then 
we write
\begin{equation}\label{eq:equiv0}
 \delta_1\equiv_m \delta_2.
\end{equation}

Let $P$, $Q$ be two polynomials in 
\[
  x, \,\, x_u,\,\, x_v\quad y, \,\, y_u,\,\, y_v,\quad z, \,\, z_u,\,\,
  z_v,
  \,\, b'(y).
\]
If all of the coefficients of $P|_m-Q|_m$
(as a polynomial in $u,v$)
are contained in $\A_{m-1}$, we denote this by
\[
   P\equiv_m Q.
\]
This notation is the same as the one used in 
\eqref{eq:equiv0}, and this is
rather useful for unifying the symbols.
For example, if the term $T$ satisfies
\[
   T \equiv_m 0
\]
if and only if the term $T$ is $m$-ignorable.

\subsection{The properties of terms containing $b'(y)$}
In the right hand sides of
\eqref{eq:E}, \eqref{eq:F} and \eqref{eq:G},
terms containing $b'$ appear,
and they are
\begin{gather}
 \label{eq:b1}
   b'(y)y x_u y_u,\quad
   b'(y)x y_u^2,\quad
   b'(y)^2 y_u^2\qquad (\mbox{in $f_u\cdot f_u$}),\\
 \label{eq:b2}
   b'(y)y x_u y_v,\quad
   b'(y)y x_v y_u,\quad
   b'(y)x y_uy_v,\quad
   b'(y)^2 y_uy_v \qquad (\mbox{in $f_u\cdot f_v$}),\\
 \label{eq:b3}
   b'(y)y x_v y_v,\quad
   b'(y)x y_v^2,\quad
   b'(y)^2 y_v^2 \qquad (\mbox{in $f_v\cdot f_v$}),
 \end{gather}
respectively.

In this subsection, we show that
the terms as in 
\eqref{eq:b1}, \eqref{eq:b2} and \eqref{eq:b3} are all
$m$-ignorable.
To prove this,
we prepare the following lemma:
\begin{lemma}\label{lem:beta}
 Each $m$-th order coefficient
 of the power series 
 associated to $b'(y(u,v))$
 can be expressed 
 as a polynomial in 
 the variables $Y(k,l)$ $(k+l\le m-1)$.
\end{lemma}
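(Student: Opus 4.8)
The plan is to realize $b'(y(u,v))$ as a composition of formal power series and then apply the order estimates for products from Section~\ref{sec:Pw}. First I would note that $b(0)=b'(0)=b''(0)=0$ forces the Taylor series $[b']$ to have neither a constant nor a linear term; writing $[b]=\sum_{j\ge 3}b_j t^j/j!$ we get $[b']=\sum_{i\ge 2}\frac{b_{i+1}}{i!}t^i$. Since $y(0,0)=0$ by Lemma~\ref{lem:realize-zero}, the power series $[y]$ lies in $\O_1$, so substituting $[y]$ into $[b']$ is a legitimate operation on formal power series and yields
\[
   [b'(y(u,v))]=\sum_{i\ge 2}\frac{b_{i+1}}{i!}\,[y]^i.
\]
By Proposition~\ref{lem:order-prod} we have $[y]^i\in\O_i$, so for a fixed order $m$ only the summands with $2\le i\le m$ affect the coefficients of order at most $m$.

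Next I would apply Corollary~\ref{cor:pi} (a consequence of Proposition~\ref{lem:order-prod}) to each product $[y]^i$, taking $P_1=\dots=P_i=[y]$ and $n_1=\dots=n_i=1$. This gives $\innner{[y]}{[y]^i}_m=m-(i-1)$, hence every $(j,k)$-coefficient of $[y]^i$ with $j+k=m$ is a homogeneous polynomial of degree $i$ in the variables $Y(a,b)$ with $a+b\le m-i+1$. Because $i\ge 2$, we have $m-i+1\le m-1$, so each such coefficient is a polynomial in $\{Y(a,b):a+b\le m-1\}$. Forming the finite linear combination $\sum_{i=2}^{m}\frac{b_{i+1}}{i!}[y]^i$ with real scalar weights, we conclude that the $m$-th order coefficient of $[b'(y(u,v))]$ is a polynomial in the variables $Y(k,l)$ with $k+l\le m-1$, which is the assertion.

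The argument is essentially bookkeeping, so I do not anticipate a genuine obstacle; the only step that needs to be carried out with care is the inequality $m-i+1\le m-1$, which is precisely where the hypothesis $b''(0)=0$ (equivalently, $[y]^i\in\O_i$ with $i\ge 2$) enters, and which is what sharpens the naive bound $k+l\le m$ to $k+l\le m-1$. This lemma is a warm-up whose purpose is to feed the order count in the rest of the subsection showing that the terms \eqref{eq:b1}--\eqref{eq:b3} are $m$-ignorable.
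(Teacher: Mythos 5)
Your proof is correct and follows essentially the same route as the paper: expand $[b']$ starting from the quadratic term, observe $Y\in\O_1$, compute $\innner{Y}{Y^r}_m=m-(r-1)\le m-1$ for $r\ge 2$, and invoke Proposition~\ref{lem:order-prod} to bound the order of the $Y$-coefficients that can appear, then take the finite linear combination. The only cosmetic difference is that you make explicit the truncation to $2\le i\le m$, which the paper leaves implicit.
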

 \begin{proof}
  We can write (cf.\ \eqref{eq:beta})
  \[
    B:=[b']=\sum_{r=2}^\infty \frac{B(r)}{r!} t^r.
  \]
  Since $Y\in \O_1$ and the index satisfies $r\ge 2$, we have
  \[
     \innner{Y}{Y^r}_m=m-(r-1)\le m-1.
  \]
  By Lemma \ref{lem:order-prod}, each
  $m$-th order term of $Y^r$
  is a homogeneous polynomial of degree $r$
  in variables $Y(k,l)$ ($k+l\le m-1$)
  for all $r\ge 2$.
  In particular, each $m$-th order term of
  the power series induced by
  \[
    \B(Y)=\sum_{r=2}^\infty \frac{B(r)}{r!}Y^r
  \]
  can be expressed as a  polynomial
  of degree at most $m$
  in variables $Y(k,l)$ ($k+l\le m-1$),
  which proves the assertion.
\end{proof}

In this situation, we set
\[
   \innner{Y}{\B(Y),P_1 \cdots P_N}_m :=
   \innner{\B(Y)}{P_1 \cdots P_N}_m - 1.
\]
Then, this number gives an 
upper bound of the degree of 
the terms of $Y$ appeared in
$\B(Y)$
to compute the $m$-th order term of
$P_1\cdots P_N$. 
\begin{proposition}
 The terms given in \eqref{eq:b1}, \eqref{eq:b2}
 and \eqref{eq:b3} are all
 $m$-ignorable.
\end{proposition}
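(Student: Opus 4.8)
The plan is to reduce the statement entirely to the order-counting results of Section~\ref{sec:Pw}, applied in the hybrid form that handles the factor $b'(y)$. First I would record the orders of the elementary factors occurring in \eqref{eq:b1}, \eqref{eq:b2} and \eqref{eq:b3}. Since $X(0,0)=Y(0,0)=0$, we have $X,Y\in\O_1$, hence $X_u,X_v,Y_u,Y_v\in\O_0$. Since $b(0)=b'(0)=b''(0)=0$, the series $B=[b']$ begins at order two in $t$; as $Y\in\O_1$, the composed series $\B(Y)=[b'(y(u,v))]=\sum_{r\ge2}\frac{B(r)}{r!}Y^r$ therefore lies in $\O_2$, and by Lemma~\ref{lem:beta} its order-$n$ coefficient is a polynomial in the variables $Y(k,l)$ with $k+l\le n-1$. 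Finally, none of the ten terms in \eqref{eq:b1}, \eqref{eq:b2} and \eqref{eq:b3} involves $z$, $z_u$ or $z_v$, so no coefficient of $Z$ can occur in any of them.

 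Next I would treat each listed term as a product $P_1\cdots P_N$ with $N\le4$, exactly one or two of whose factors equals $\B(Y)\in\O_2$ while every remaining factor lies in $\O_1$ (an undifferentiated $x$ or $y$) or in $\O_0$ (a first derivative of $x$ or $y$). Applying Proposition~\ref{lem:order-prod} and Corollary~\ref{cor:order-prod2}, together with the pairing $\innner{Y}{\B(Y),P_1\cdots P_N}_m=\innner{\B(Y)}{P_1\cdots P_N}_m-1$ set up for the $\B(Y)$-factor, I would bound, for the $m$-th order coefficient of the product, the largest order of a coefficient of $X$ and of $Y$ that can appear. The mechanism is uniform: the factor $\B(Y)\in\O_2$ always uses up two units of order, so the resulting bound for $Y$-coefficients comes out at most $m-2$, and likewise for $X$-coefficients it is at most $m-2$ (after the $+1$ correction when the factor is one of $x_u,x_v,y_u,y_v$). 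For instance $T=b'(y)\,y\,x_u y_u$ becomes $\B(Y)\cdot Y\cdot X_u\cdot Y_u$ with $\innner{\B(Y)}{T}_m=m-1$, so $Y$ enters through $\B(Y)$ only up to order $m-2$, the undifferentiated factor $Y$ enters up to order $m-2$, and $X_u,Y_u$ enter up to order $m-3$ so that $X$ and $Y$ enter through them only up to order $m-2$. The remaining nine terms are handled by the same short computation.

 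I would then conclude: since $m-2<m-1$ and $m-2<m+1$ for every $m\ge3$, none of the top coefficients $X(j,k)$ with $j+k=m+1$, $Y(j,k)$ with $j+k=m-1$, or $Z(j,k)$ with $j+k=m$ can occur in the coefficients $(T|_m)(j,k)$, $j+k=m$; by definition this says precisely that each $T$ in \eqref{eq:b1}--\eqref{eq:b3} is $m$-ignorable, i.e.\ $T \equiv_m 0$.

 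There is no genuine obstacle here; the whole argument is a finite bookkeeping exercise. The one point requiring care is that $b'(y)$ is not of the form $X_\mu,(X_\mu)_u,(X_\mu)_v$ to which Corollary~\ref{cor:order-prod2} applies directly, which is why it must be routed through Lemma~\ref{lem:beta} and the auxiliary pairing $\innner{Y}{\B(Y),P_1\cdots P_N}_m$; the decisive quantitative input is simply that $b'$ vanishes to order two, exactly the content of $b(0)=b'(0)=b''(0)=0$.
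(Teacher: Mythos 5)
Your proposal is correct and follows essentially the same route as the paper: every term is treated as a product $P_1\cdots P_N$ with one or two factors equal to $\B(Y)\in\O_2$, and one applies Corollary~\ref{cor:order-prod2} together with Lemma~\ref{lem:beta} and the auxiliary pairing $\innner{Y}{\B(Y),\,\cdot\,}_m$ to bound the orders of the coefficients of $X$ and $Y$ that can appear. The only cosmetic difference is that the paper uses the sharper orders $X_v\in\O_3$ and $Y_u\in\O_1$ (from $\Xx_3=u$, $\Yy_1=v$) rather than the crude $\O_0$ bounds you use; this just lowers some of the resulting pairings from $m-2$ to $m-3$ or $m-4$, which is immaterial since $m-2$ already lies strictly below the thresholds $m-1$ and $m+1$.
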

\begin{proof}
 We can categorize
 the terms in \eqref{eq:b1},
 \eqref{eq:b2} and \eqref{eq:b3}
 into three classes.
 One is
 \begin{align}\label{cat:B1}
  b'(y)y x_uy_u\in \O_2\O_1\O_0\O_1,\quad
  b'(y)y x_uy_v\in \O_2\O_1\O_0\O_0,\\
  \nonumber
  b'(y)y x_vy_u\in \O_2\O_1\O_3\O_1,\quad 
  b'(y)y x_vy_v\in \O_2\O_1\O_3\O_0, 
 \end{align}
 and the other two classes are
 \begin{equation}
  \label{cat:B2}
   b'(y)x y_u^2\in \O_2\O_1\O_1\O_1,~
   b'(y)x y_uy_v\in \O_2\O_1\O_1\O_0,~
   b'(y)x y_v^2\in \O_2\O_1\O_0\O_0,
 \end{equation}
 and
 \begin{equation}\label{cat:B3}
  b'(y)^2y_u^2\in \O_2\O_2\O_1\O_1,~
   b'(y)^2y_uy_v\in \O_2\O_2\O_1\O_0,~
   b'(y)^2 y_v^2\in \O_2\O_2\O_0\O_0,
 \end{equation}
 respectively.

 We consider the term
 $b'(y)y x_uy_v$ in \eqref{cat:B1}.
 We have 
 \[
 \innner{Y}{\B(Y),\B(Y)Y X_u Y_v}_m
 =(m-1-0-0)-1=m-2.
 \]
 Since $m-2$ is less than $m-1$, $\B(Y)$
 does not effect the leading term
 (cf.\ Lemma~\ref{lem:beta}).
 Similarly, we have that
 \begin{align*}
  \innner{Y}{\B(Y)Y X_u Y_v}_m
  &=m-2-0-0=m-2\,\,(<m-1), \\
  \innner{X}{X_u,\B(Y)Y X_u Y_v}_m
  &=(m-2-1-0)+1
  =m-2\,\,(<m+1), \\
  \innner{Y}{Y_v,\B(Y)Y X_u Y_v}_m
  &=(m-2-1-0)+1
  =m-2\,\,(<m-1) ,
 \end{align*}
 and we can conclude that 
 $b'(y)y x_u y_v$ is an $m$-ignorable term.
 Similarly, one can also prove that other 
 three terms in \eqref{cat:B1}
 are also $m$-ignorable.
 
 \medskip
 We next consider the term
 $b'(y)x y_{v}^2$ in \eqref{cat:B2}.
 We have
 \[
 \innner{Y}{\B(Y),\B(Y)X Y_v^2}_m
 =(m-1-0-0)-1=m-2\,\,(< m-1).
 \]
 Hence, the coefficients of $Y$ 
 appeared in $\B(Y)$
 do not effect the leading term.
 Similarly, the facts
 \begin{align*}
  \innner{X}{\B(Y)X Y_v^2}_m
  &=m-2-0-0=m-2 \,\,(<m), \\
  \innner{Y}{Y_v,\B(Y)X Y_v^2}_m
  &=(m-2-1-0)+1
  =m-2\,\,(<m-1), 
 \end{align*}
 imply that the term
 $b'(y)x y_v^2$ is an $m$-ignorable term.
 Similarly, other two terms in \eqref{cat:B2}
 are also $m$-ignorable.

\medskip
 Finally, we consider 
 the term
 $b'(y)^2y_v^2$ in  \eqref{cat:B3}.
 Since
 \[
    \innner{Y}{\B(Y),\B(Y)^2Y_v^2}_m
     =(m-2-0-0)-1=m-3\ (< m-1),
 \]
 the coefficients of $Y$ 
 appearing in $b'(Y)$
 do not effect the leading term.
 We have
 \[
     \innner{Y}{Y_v,\B(Y)^2Y_v^2}_m
        =(m-2-2-0)+1=m-3\ (<m-1),
 \]
 and can conclude that
 $b'(y)^2y_v^2$ is 
 an $m$-ignorable term.
 Similarly,
 other two terms 
 in  \eqref{cat:B3} are also
 $m$-ignorable.
\end{proof}

The following terms
\begin{gather}
\label{eq:c1}
 x^2 y_u^2 \qquad (\mbox{in $f_u\cdot f_u$}),
\\
\label{eq:c2}
 x_ux_vy^2
 ,\quad
 xyx_vy_u\qquad (\mbox{in $f_u\cdot f_v$}),\\
\label{eq:c3}
 xyx_vy_v,\quad
 x_v^2,
 \quad
 x_v^2y^2\qquad (\mbox{in $f_v\cdot f_v$})
\end{gather}
appear in
\eqref{eq:E},
\eqref{eq:F} and
\eqref{eq:G}.
We show the following:
\begin{proposition}\label{prop:rest}
 The terms given in 
 \eqref{eq:c1}, \eqref{eq:c2}
 and \eqref{eq:c3} are all
 $m$-ignorable terms.
\end{proposition}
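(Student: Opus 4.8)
The plan is to treat each of the terms in \eqref{eq:c1}, \eqref{eq:c2} and \eqref{eq:c3} exactly as the $b'(y)$-terms were treated in the previous proposition: namely, to record the membership of each term in a product of $\O$-ideals (using $x,z\in\O_1$, $x_u\in\O_0$, $x_v\in\O_3$ — since $x=u+\text{(order }\ge 2)$ so $x_v$ has no constant or linear part by Lemma~\ref{lem:2} — and $y,y_u,y_v\in\O_1,\O_0,\O_0$ respectively after the normalization \eqref{eq:2nd-N}), and then compute the relevant pairing numbers $\innner{\cdot}{\cdot}_m$ via Proposition~\ref{cor:order-prod} and Corollary~\ref{cor:order-prod2}. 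For each occurrence of a factor $\check X_i\in\{X,Y,Z\}$ inside the term, one checks that the bound $\innner{\check X_i}{P_i,P_1\cdots P_N}_m$ is strictly smaller than the critical order ($m+1$ for $X$, $m-1$ for $Y$, $m$ for $Z$), which is precisely the statement that the top coefficients $X(j,k)$ with $j+k=m+1$, $Y(j,k)$ with $j+k=m-1$, $Z(j,k)$ with $j+k=m$ do not appear; i.e.\ the term is $m$-ignorable, $T\equiv_m 0$.

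Concretely, I would go term by term. For $x^2y_u^2\in\O_1\O_1\O_0\O_0$ the total order is $2$, so $\innner{X}{X^2y_u^2}_m=m-1-0-0=m-1<m+1$ for the two $X$-factors, and for each $y_u$-factor $\innner{Y}{Y_u,X^2Y_u^2}_m=(m-1-1-0)+1=m-1<m-1$? — one must be careful here: the raw bound gives $m-1$, which is \emph{not} strictly less than $m-1$, so one uses instead that $y_u\in\O_0$ forces one to go through $y\in\O_1$ with the derivative correction, giving exactly $m-1$, and this is borderline; however the factor $y_u$ appears \emph{squared}, so by Proposition~\ref{cor:order-prod} the order-$m$ coefficient is a homogeneous polynomial of degree $2$ in the $Y(b,c)$ with $b+c\le m-1$, and since it is quadratic in those variables, the coefficient $Y(m-1,0)$ etc.\ can only appear multiplied by another $Y(b,c)$ with $b+c\ge 1$, hence never linearly — but what we need is that it does not appear \emph{at all} among the top coefficients, i.e.\ that $b+c\le m-1$ rather than $b+c=m-1$ alone suffices, which it does because $Y(j,k)$ with $j+k=m-1$ \emph{are} the top coefficients of $\Yy_{m-1}$. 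Here I would instead observe more carefully that, since $x\in\O_1$ contributes at least order $1$ and appears squared contributing order $\ge 2$ total, while $y_u^2$ contributes order $\ge 0$, to reach total order $m$ the $y_u$ factors can carry order at most $m-2$ each, hence involve $Y(b,c)$ with $b+c\le m-1$; since $\Yy_{m-1}$ already has degree $m-1$, these are not top coefficients only if the bound is strict — so the correct statement is that $x^2y_u^2$ contributes $Y$-variables of order $\le m-1$, and since a single $y_u$ factor alone would be the problem, but here $x^2$ eats two orders, the effective bound is $m-2<m-1$, giving $m$-ignorability.

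For the remaining terms the bookkeeping is entirely analogous and in fact easier, because each of $x_ux_vy^2$, $xyx_vy_u$, $xyx_vy_v$, $x_v^2$, $x_v^2y^2$ contains a factor $x_v\in\O_3$, which by itself already forces the other factors to live in low order: e.g.\ $x_v^2\in\O_3\O_3\subset\O_6$, so for $m\ge 3$ the order-$m$ coefficient involves $X$-variables of order $\le m-3<m+1$; $x_v^2y^2\in\O_3\O_3\O_1\O_1\subset\O_8$ is even more degenerate; and $x_ux_vy^2$, $xyx_vy_u$, $xyx_vy_v$ all have a single $x_v$ pushing them into $\O_3$ or higher together with other positive-order factors, so that $\innner{X}{X_v,\cdot}_m$, $\innner{Y}{\cdot}_m$, $\innner{Z}{\cdot}_m$ all come out $<m+1$, $<m-1$, $<m$ respectively. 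The only genuine subtlety — and the step I expect to require the most care — is the borderline computation for $x^2y_u^2$ described above, where the naive pairing bound is exactly $m-1$ and one must invoke the fact that $\Xx_{m+1}=u+O_2$ so $x\in\O_1$ exactly (not just $\O_0$), thereby gaining the extra order needed to make the inequality strict; once that is observed, I would conclude $T\equiv_m 0$ for every $T$ in \eqref{eq:c1}, \eqref{eq:c2}, \eqref{eq:c3}, completing the proof.
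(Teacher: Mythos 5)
Your overall strategy -- assign each factor to the smallest ideal $\O_n$ containing it and then compute the pairing numbers $\innner{\cdot}{\cdot}_m$ -- is exactly the paper's, and your treatment of the terms containing $x_v$ is fine (the paper additionally reduces the number of explicit computations by observing that if a term is $m$-ignorable, so is any term obtained by replacing a factor with one of strictly higher order, but the content is the same).

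The genuine gap is in your bookkeeping for $y_u$ and, as a consequence, your treatment of $x^2y_u^2$. You assign $y_u\in\O_0$, which is the wrong ideal: by Lemma~\ref{lem:2} (and the inductive construction) $Y(1,0)=0$, so $\Yy_{m-1}=v+O_2(u,v)$ and hence $(\Yy_{m-1})_u\in\O_1$, not merely $\O_0$. With your weaker membership you obtain the borderline pairing number $\innner{Y}{Y_u,X^2Y_u^2}_m=m-1$, and the repair you offer does not close the gap: the claim that ``$x^2$ eats two orders, so the effective bound on each $y_u$ is $m-2$'' is not a consequence of the pairing estimate alone -- with $y_u\in\O_0$, one factor of $y_u$ could contribute a constant (the coefficient $Y(1,0)$) while the other contributes order $m-2$, i.e.\ a top coefficient $Y(b,c)$ with $b+c=m-1$. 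The estimate is strict only because the constant in question, $Y(1,0)$, vanishes; that vanishing is precisely the content of $y_u\in\O_1$, which you should state and use directly. Once you record $x^2y_u^2\in\O_1\O_1\O_1\O_1$, the computation is clean: $\innner{X}{X^2Y_u^2}_m=m-3<m+1$ and $\innner{Y}{Y_u,X^2Y_u^2}_m=(m-1-1-1)+1=m-2<m-1$, so $x^2y_u^2$ is $m$-ignorable with no borderline case to agonize over.
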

\begin{proof}
 Since $x^2 y_u^2\in \O_1\O_1\O_1\O_1$,
 we have
 \begin{align*}
  \innner{X}{X^2Y_u^2}_m&=m-1-1-1=m-3\ (<m+1),\\
  \innner{Y}{Y_u,X^2Y_u^2}_m&=(m-1-1-1)+1=m-2\ (<m-1).
 \end{align*}
 This  implies that $x^2 y_u^2$ is 
 $m$-ignorable.
 On the other hand,
 $x_ux_vy^2$
 and
 $x_v^2y^2$
 both consist of two derivatives of $x$
 and $y^2$,
 and the former term has lower total order,
 so if we show $x_ux_vy^2$ is 
 $m$-ignorable,
 then so is $x_v^2y^2$. 
 In fact, since $x_ux_vy^2\in \O_0\O_3\O_1\O_1$, 
 we have
 \begin{align*}
  \innner{X}{X_u,X_uX_vY^2}_m&\le 
  \innner{X}{X_v,X_uX_vY^2}_m
  \\
  &=m-0-1-1+1=m-1\ (<m+1),\\
  \innner{Y}{X_uX_vY^2}_m&=m-0-3-1=m-4\ (<m-1).
 \end{align*}
 So $x_ux_vy^2$ is
 $m$-ignorable. 

 Finally,
 $xyx_vy_u$ and
 $xyx_vy_v$
 both consist of $xy$ and derivatives of $x,y$.
 The term $xyx_vy_v$ has lower total order.
 So if it is $m$-ignorable, then so is $xyx_vy_u$.
 The fact that $xyx_vy_v \in \O_1\O_1\O_3\O_0$
 is $m$-ignorable follows from the
 following computations:
 \begin{align*}
  \innner{X}{XYX_vY_v}_m&=
  \innner{Y}{XYX_vY_v}_m= 
  m-1-3-0=m-4\,\,(<m-1),\\
  \innner{X}{X_v,XYX_vY_v}_m&=
  (m-1-1-0)+1=m-1\,\,(<m+1),\\
  \innner{Y}{Y_v,XYX_vY_v}_m&=
  (m-1-1-3)+1=m-4\,\,(<m-1).
 \end{align*}
 Finally, $x_v^2\in \O_3\O_3$
 is $m$-ignorable because 
 $\innner{X}{X_v,X_v^2}_m=m-2\,\,(<m+1)$.
\end{proof}

\section{The existence of the formal power series solution}
\label{sec:App}
\subsection{%
  Leading terms of $(f_u\cdot f_u)$, 
   $(f_u\cdot f_v)$ and $(f_v\cdot f_v)$.}
Applying the computations 
in the previous section, we prove
the following:
\begin{proposition}
 Let $m$ be an integer greater than $2$,
 and $k$, $l$ non-negative integers
 such that
 \begin{equation}\label{eq:m}
  k+l=m\geq 3.
 \end{equation}
 Then
 the $m$-th order terms of the equalities
 \eqref{eq:E0}, \eqref{eq:F0}
 and \eqref{eq:G0}
 reduce to the following relations{\rm:}
 \begin{align}
  \label{eq:Eee}
  \E(k,l) &\equiv_m
  2\biggl(
                X(k+1,l)+
                (k+1)l Y(k,l-1)\\
  \nonumber
  &\hspace{0.3\linewidth}
           +k a_{2,0} Z(k,l) + la_{1,1}Z(k+1,l-1)
            \biggr),
  \\
  \label{eq:Fff}
  \F(k,l) &\equiv_m  X(k,l+1)+
            mk Y(k-1,l)+
            k a_{2,0} Z(k-1,l+1)\\
  \nonumber
  &\hspace{0.3\linewidth}
  +m a_{1,1}Z(k,l)+ la_{0,2}Z(k+1,l-1), \\
  \label{eq:Ggg}
  \G(k,l) &\equiv_m  2\biggl(
                         k(k-1)Y(k-2,l+1)
  \\ \nonumber 
  &\hspace{0.3\linewidth}
    +    ka_{1,1}Z(k-1,l+1)+la_{0,2}Z(k,l)\biggr),
 \end{align}
 where
 \begin{multline*}
  Y(m,-1)=Y(-1,m)= Y(-2,m+1)=Y(m+1,-2)\\
   = Z(-1,m+1)=Z(m+1,-1)=0.
 \end{multline*}
\end{proposition}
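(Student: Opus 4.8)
The plan is to start from the expansions \eqref{eq:E}, \eqref{eq:F}, \eqref{eq:G} of $f_u\cdot f_u$, $f_u\cdot f_v$, $f_v\cdot f_v$ as sums of their terms (Definition~\ref{def:term}), discard the $m$-ignorable ones, and then compute the order-$m$ coefficient of each surviving leading term modulo $\A_{m-1}$. Since in \eqref{eq:E0}, \eqref{eq:F0}, \eqref{eq:G0} the remainder lies in $\O_{m+1}$, comparing the $(k,l)$-coefficients with $k+l=m$ turns these three conditions into relations among $\E(k,l)$, $\F(k,l)$, $\G(k,l)$ and the coefficients of $\Xx_{m+1}$, $\Yy_{m-1}$, $\Zz_{m}$; passing to $\equiv_m$ (i.e.\ working modulo $\A_{m-1}$) means that only the top coefficients $X(j,i)$ $(j+i=m+1)$, $Y(j,i)$ $(j+i=m-1)$, $Z(j,i)$ $(j+i=m)$ survive, and \eqref{eq:Eee}--\eqref{eq:Ggg} are exactly the relations one then obtains.

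First I would record the leading terms. By the results of the previous section every term containing $b'(y)$ is $m$-ignorable, and so are $x^2y_u^2$ in $f_u\cdot f_u$, the terms $x_ux_vy^2$ and $xyx_vy_u$ in $f_u\cdot f_v$, and $xyx_vy_v$, $x_v^2$, $x_v^2y^2$ in $f_v\cdot f_v$ (Proposition~\ref{prop:rest}). Hence $f_u\cdot f_u\equiv_m x_u^2+y^2x_u^2+2xyx_uy_u+z_u^2$, $f_u\cdot f_v\equiv_m x_ux_v+xyx_uy_v+x^2y_uy_v+z_uz_v$, and $f_v\cdot f_v\equiv_m x^2y_v^2+z_v^2$. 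For each of these leading terms I would then read off its order-$m$ coefficient using the product formula (Lemma~\ref{formula:power-prod} and Corollary~\ref{cor:power-prod}) together with the low-order normalizations coming from Lemma~\ref{lem:2}, namely $X(1,0)=1$, $X(0,1)=0$, $Y(1,0)=0$, $Y(0,1)=1$, $Z(2,0)=a_{2,0}$, $Z(1,1)=a_{1,1}$, $Z(0,2)=a_{0,2}$. The uniform mechanism is that a top coefficient of $X$, $Y$, or $Z$ can be produced only when all the other factors of a product sit at their minimal orders, and those minimal-order parts of $x$, $x_u$, $y$, $y_u$, $y_v$, $z_u$, $z_v$ are precisely the explicit data just listed. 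For instance $x_u^2=X_u^2$ yields $2X(k+1,l)$; the terms $y^2x_u^2$ and $2xyx_uy_u$ together yield $2(k+1)lY(k,l-1)$; and $z_u^2=Z_u^2$ yields $2\bigl(ka_{2,0}Z(k,l)+la_{1,1}Z(k+1,l-1)\bigr)$; adding these gives \eqref{eq:Eee}, and the analogous computations for the remaining leading terms give \eqref{eq:Fff} and \eqref{eq:Ggg}. The boundary conventions $Y(m,-1)=\dots=Z(m+1,-1)=0$ are nothing but the negative-index convention of Corollary~\ref{cor:power-prod}.

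I expect the only real obstacle to be the bookkeeping in this second step: for each leading term one must collect \emph{every} contribution to a top coefficient, remembering that a variable may enter a term either directly or through one of its first partial derivatives --- for example in $x^2y_uy_v$ the top $Y$-coefficient reaches the order-$m$ part only through $y_u$, whereas in $xyx_uy_v$ it enters both through $y$ and through $y_v$, so that $xyx_uy_v$ and $x^2y_uy_v$ must be summed to produce the coefficient $mk$ in \eqref{eq:Fff} --- and one must also check that no contribution from a non-top coefficient survives modulo $\A_{m-1}$. This is exactly what the degree estimates of the previous two sections were built to control: once the leading terms are isolated, applying those estimates reduces the verification of \eqref{eq:Eee}, \eqref{eq:Fff} and \eqref{eq:Ggg} to a finite and entirely explicit calculation with the product formula, with no further analytic or structural content.
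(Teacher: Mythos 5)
Your proposal follows the same strategy as the paper's own proof: first discard the $m$-ignorable terms identified in the preceding section to isolate the leading parts $x_u^2+y^2x_u^2+2xyx_uy_u+z_u^2$, $x_ux_v+xyx_uy_v+x^2y_uy_v+z_uz_v$, $x^2y_v^2+z_v^2$, then compute each surviving term's $(k,l)$-coefficient modulo $\A_{m-1}$ using the product/derivative coefficient formulas together with the Lemma~\ref{lem:2} normalizations, and collect. The contributions you single out (e.g.\ $y^2x_u^2$ and $2xyx_uy_u$ combining to $2(k+1)lY(k,l-1)$, and $xyx_uy_v$ plus $x^2y_uy_v$ combining to $mkY(k-1,l)$) match the paper's term-by-term computation, so the proposal is correct and is essentially the paper's argument.
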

\begin{proof}
 Removing the $m$-ignorable terms
 \eqref{eq:b1} and \eqref{eq:c1}
 from $[f_u\cdot f_u]$ in \eqref{eq:E},
 we have
 \[
   [f_u\cdot f_u]
     \equiv_m L_1,\qquad
          L_1:=X_u^2+Y^2 X_u^2+2 XYX_uY_u+Z_u^2.
 \]
 Similarly, we get 
 from \eqref{eq:F}, \eqref{eq:G}, \eqref{eq:b1} and \eqref{eq:c1}
 that 
 \[
    [f_u\cdot f_v]
         \equiv_m L_2 ,\qquad
    [f_v\cdot f_v]
        \equiv_m L_3, 
 \]
 where 
 \[
        L_2:=X_uX_v+XYX_uY_v
              + X^2Y_uY_v+Z_uZ_v, \qquad
        L_3:=X^2Y_v^2+Z_v^2.
 \]
 The first term of $L_1$ is $X_u^2$.
 We can write 
 $X=u+\tilde X$ ($\tilde X\in \O_4$).
 Since 
 \[
    \innner{\tilde X}{\tilde X_u,\tilde X_u^2}_m=m-2\ (<m+1),
 \]
 we have
 \[
    X_u^2= (1+\tilde X_u)^2\equiv_m 2 \tilde X_u
 \]
 and 
 \begin{equation}\label{eq:I1}
  X_u^2(k,l)\equiv_m 2 \tilde X_u(k,l)
   \equiv_m 2 X_u(k,l)=2X(k+1,l)\qquad
   (k+l=m\geq 3),
 \end{equation}
 where we have applied Lemma \ref{lem:diff}.
 The second term of $L_1$ is
 $Y^2X_u^2$.
 Since
 \[
    \innner{X}{X_u,Y^2X_u^2}_m=m-1-1-0+1=m-1\,\,(<m+1)
 \]
 and
 \[
    \innner{Y}{Y^2X_u^2}_m=m-1-0-0=m-1,
 \]
 the $m$-th order terms of $Y^2X_u^2$
 might not be $m$-ignorable.
 In fact, it 
 can be written in terms 
 of the $(m-1)$-st order coefficients of $Y$
 as follows.
 Since 
 $X_u=1+\tilde X_u$ ($\tilde X_u\in \O_3$),
 we have
 $Y^2X_u^2\equiv_m Y^2$.
 Since $Y=v+\tilde Y$ ($\tilde Y\in \O_2$)
 and $\innner{\tilde Y}{\tilde Y^2}_m=m-2\ (<m-1)$,
 $\tilde Y^2$ is an 
 $m$-ignorable term.
 Thus, we have
 \[
    Y^2\equiv_m
     v^2+2v\tilde Y+\tilde Y^2  
     \equiv_m
     2v \tilde Y,
 \]
 and so
 \begin{equation}\label{eq:I2}
  (Y^2X_u^2)(k,l)
   \equiv_m 2(v \tilde Y)(k,l)\equiv_m 2(v Y)(k,l)\equiv_m 2l Y(k,l-1),
 \end{equation}
 where we have applied Corollary \ref{cor:power-prod}.
 We examine the third term $XYX_uY_u$
 of $L_1$.
 Since
\begin{align*}
 \innner{X}{XYX_uY_u}_m&=m-1-0-1=m-2\ (<m+1),\\
 \innner{Y}{XYX_uY_u}_m&=m-1-0-1=m-2\ (<m-1),\\
 \innner{X}{X_u,XYX_uY_u}_m
 &=(m-1-1-1)+1=m-2\ (<m+1),\\
 \innner{Y}{Y_u,XYX_uY_u}_m
 &=(m-1-1-0)+1=m-1,
\end{align*}
 the $m$-th order terms of $XYX_uY_u$
 can be written in terms 
 of the coefficients of $Y_u$
 modulo $\A_{m-1}$.
 Thus
 \[
   XYX_uY_u=(u+\tilde X)(v+\tilde Y)(1+\tilde X_u)Y_u
   \equiv_m uvY_u
 \]
 and
 \begin{equation}\label{eq:star2}
  (XYX_uY_u)(k,l)\equiv_m(uvY_u)(k,l)=
   klY_u(k-1,l-1)\equiv_mklY(k,l-1).
 \end{equation}
 The fourth term of $L_1$ is $Z_u^2$.
 Since $Z_u^2\in \O_1\O_1$, we have
 \[
   \innner{Z}{Z_u,Z_u^2}_m
    =(m-1)+1=m.
 \]
 Hence the $m$-th order terms of $Z_u^2$
 can be written in terms of the coefficients of $Z_u$
 modulo $\A_{m-1}$.
 If we write 
 \[
   Z=\frac12(a_{2,0}u^2+2a_{1,1}uv+a_{0,2}v^2)+\tilde Z
 \qquad
 (\tilde Z\in \O_3),
 \]
 then $\tilde Z_u^2$ is an 
 $m$-ignorable term,
 and
 \[
   Z_u^2\equiv_m (a_{2,0}u+a_{1,1}v+\tilde Z_u)^2
    \equiv_m 2\left(a_{2,0}u\tilde Z_u+a_{1,1}v\tilde Z_u\right).
 \]
 Since $Z(k,l)=\tilde Z(k,l)$ for $k+l\ge 3$,
 we have
 \begin{align}\label{eq:I3}
  Z_u^2(k,l)
  &\equiv_m
  2a_{2,0}(uZ_u)(k,l)+2a_{1,1}(vZ_u)(k,l) \\
  &\nonumber
  =
  2a_{2,0}kZ_u(k-1,l)+2a_{1,1}lZ_u(k,l-1)\\
  &\nonumber
  =
  2ka_{2,0}Z(k,l)+2la_{1,1}Z(k+1,l-1).
 \end{align}
 By \eqref{eq:I1}, \eqref{eq:I2},
 \eqref{eq:star2}  and \eqref{eq:I3},
 we have \eqref{eq:Eee}.
 
 We next prove \eqref{eq:Fff}.
 Since
 \begin{align*}
  &\innner{X}{X_u,X_uX_v}_m=m-3+1=m-2
  \,\, (<m+1),\\
  &\innner{X}{X_v,X_uX_v}_m=m-0+1=m+1,
 \end{align*}
 $X_{v}$ contributes to the leading term.
 Thus
 \[
    X_uX_v\equiv_m (1+\tilde X_u)X_v \equiv_m X_v
 \]
 and
 \begin{equation}\label{II1}
  (X_uX_v)(k,l)\equiv_m X_v(k,l)=X(k,l+1).
 \end{equation}
 On the other hand, 
 since
 \begin{align*}
  \innner{X}{XYX_uY_v}_m&=m-1-0-0=m-1\ (<m+1),\\
  \innner{Y}{XYX_uY_v}_m&=m-1-0-0=m-1,\\
  \innner{X}{X_u,XYX_uY_v}_m
  &=(m-1-1-0)+1=m-1\ (<m+1),\\
  \innner{Y}{Y_v,XYX_uY_u}_m
  &=(m-1-1-0)+1=m-1,
 \end{align*}
 the coefficients of
 the factors $Y$ and $Y_v$
 appear in the leading terms. 
 Thus
 \[
    XYX_uY_v\equiv_m(u+\tilde X)(v+\tilde Y)(1+\tilde X_u)
    (1+\tilde Y_v)
    \equiv_m u (v+\tilde Y)(1+\tilde Y_v)
     \equiv_m uv \tilde Y_v+u \tilde Y
 \]
 and
 \begin{multline}\label{II2}
  (XYX_uY_v)(k,l)\equiv_m
  (uv Y_v)(k,l)+(u Y)(k,l) \\
  \equiv_m
  klY_v(k-1,l-1)+kY(k-1,l)
  \equiv_m
  k(l+1)Y(k-1,l).
 \end{multline}
 The third term of $L_2$ is
 $X^2Y_uY_v$.
 Since
 \begin{align*}
  \innner{X}{X^2Y_uY_v}_m&=m-1-1-0=m-2\ (<m+1),\\
  \innner{Y}{Y_u,X^2Y_uY_v}_m&=m-1-1-0+1=m-1,\\
  \innner{Y}{Y_v,X^2Y_uY_v}_m&=m-1-1-1+1=m-2\ (<m-1),
 \end{align*}
 only the factor $Y_u$
 affects the computation of the leading term.
 So we have
 \[
    X^2Y_uY_v\equiv_m (u+\tilde X)^2Y_u(1+\tilde Y_v)
      \equiv_m u^2 Y_u
 \]
 and
 \begin{multline}\label{II3}
  (X^2Y_uY_v)(k,l)\equiv_m (u^2 Y_u)(k,l)\\
  \equiv_m k(k-1)Y_u(k-2,l)
   \equiv_m k(k-1)Y(k-1,l).
 \end{multline}
 The fourth term of $L_2$ is $Z_uZ_v\in \O_1\O_1$.
 Since 
 \[
    \innner{Z}{Z_u,Z_uZ_v}_m=\innner{Z}{Z_v,Z_uZ_v}_m
     =m-1+1=m,
 \]
 $Z_u$ and $Z_v$ both contribute to the leading terms,
 and
 \begin{align*}
  Z_uZ_v
  &= 
  (a_{2,0}u+a_{1,1}v+\tilde Z_u)
  (a_{1,1}u+a_{0,2}v+\tilde Z_v)\\
  &\equiv_m 
  (a_{1,1}u+a_{0,2}v)\tilde Z_u+
  (a_{2,0}u+a_{1,1}v)\tilde Z_v. 
 \end{align*}
 So we have
 \begin{align}\label{II4}
  &(Z_uZ_v)(k,l)\equiv_m 
  \bigl((a_{1,1}u+a_{0,2}v)Z_u
  \bigr)(k,l)+
  \bigl((a_{2,0}u+a_{1,1}v)Z_v\bigr)(k,l) \\
  \nonumber
  &\equiv_m ka_{1,1}Z(k,l)+l a_{0,2}Z(k+1,l-1) 
  +ka_{2,0}Z(k-1,l+1)+la_{1,1}Z(k,l) \\
  \nonumber
  &=ka_{2,0}Z(k-1,l+1)
  +
  m a_{1,1}Z(k,l)+l a_{0,2}Z(k+1,l-1). 
 \end{align}
 By \eqref{II1}, \eqref{II2}, \eqref{II3} and \eqref{II4},
 we obtain \eqref{eq:Fff}.
 
 Finally, we consider $L_3$. 
 The first term of $L_3$ is
 $X^2Y_v^2$. 
 Since
 \begin{align*}
  \innner{X}{X^2Y_v^2}_m&=m-1-0-0=m-1\ (<m+1),\\
  \innner{Y}{Y_v,X^2Y_v^2}_m&=m-1-1-0+1=m-1,
 \end{align*}
 the coefficients of $Y_v$ affect the leading term
 of $X^2Y_v^2$.
 We have
 \[
    (X^2Y_v^2)\equiv_m
    (u+\tilde X)^2(1+\tilde Y_v)^2\equiv_m
    2u^2\tilde Y_v
 \]
 and 
 \begin{equation}\label{III1}
  (X^2Y_v^2)(k,l)\equiv_m
   2\bigl(u^2Y_v\bigr)(k,l)\equiv_m 2k(k-1)Y(k-2,l+1).
 \end{equation}
 The second term of
 $L_3$ is $Z_v^2$.  
 Since
 \[
  Z_v^2\equiv_m (a_{1,1}u+a_{0,2}v+\tilde Z_v)^2
   \equiv_m2(a_{1,1}u+a_{0,2}v)\tilde Z_v,
 \]
 we have
 \begin{equation}\label{III2}
  Z_v^2(k,l)\equiv_m 
   2\bigl((a_{1,1}u+a_{0,2}v)Z_v\bigr)(k,l)
   \equiv_m 2ka_{1,1}Z(k-1,l+1)+2la_{0,2}Z(k,l).
 \end{equation}
 By \eqref{III1} and \eqref{III2},
 we obtain \eqref{eq:Ggg}.
\end{proof}

\subsection{Proof of Proposition~\ref{prop:key}}
We prove the assertion by induction.
By Lemma \ref{lem:2},
we have already
determined the coefficients
\[
   X(i,l)\quad (0\le i+l\le 3), \quad
   Y(j,l)\quad (0\le  j+l\le 1), \quad
   Z(k,l)\quad (0\le  k+l\le 2). 
\]
For the sake of simplicity, we set
\begin{equation}\label{eq:index-simple}
    X_{i}:=X(i,m-i+1),\quad
    Y_{j}:=Y(k,m-j-1),\quad
    Z_{k}:=Z(k,m-k).
\end{equation}
We say that $W=X_i$, $Y_j$, $Z_k$ is \emph{$m$-fixed}
if it is expressed in terms of
\begin{align*}
 X(i,l)\qquad &(0\le i+l\le  m), \\
 Y(j,l)\qquad &(0\le  j+l\le  m-2), \\
 Z(k,l)\qquad &(0\le  k+l\le m-1), \\
 \E(i,l),\,\,\F(i,l),\,\,\G(i,l)\qquad& (0\le i+l\le  m).
\end{align*}
To prove the theorem,
it is sufficient to show that
\[
  X_i\quad (i=0,\dots,m+1),\qquad Y_j\quad (j=0,\dots,m-1), \qquad
  Z_k\quad (k=0,\dots,m)
\]
are all $m$-fixed.
By \eqref{eq:Eee}, 
\eqref{eq:Fff} and \eqref{eq:Ggg}, 
we can write
\begin{align}\label{eq:Eee1}
 X_{k+1}+(k+1)(m-k) Y_k
  +k a_{2,0} Z_k + (m-k)a_{1,1}Z_{k+1}
 =\tilde{\E}_k,
 \\
 \label{eq:Fff1}
 X_k+
            mk Y_{k-1}+
            k a_{2,0} Z_{k-1}
            +m a_{1,1}Z_k+ (m-k)a_{0,2}Z_{k+1}
 =\tilde {\F}_k,
 \\
 \label{eq:Ggg1}
                k(k-1)Y_{k-2}
 +
                ka_{1,1}Z_{k-1}+(m-k)a_{0,2}Z_k=\tilde {\G}_k,
\end{align}
for $k=0,\dots,m$, where
$\tilde {\E}_k$, $\tilde {\F}_k$ and $\tilde {\G}_k$
are all previously $m$-fixed terms, 
by the inductive assumption.

If we set $k=0$ in \eqref{eq:Ggg1},
we have 
\begin{equation}\label{eq:Z0}
 Z_0=\frac{\tilde {\G}_0}{m a_{0,2}},
\end{equation}
where we used the fact that $a_{0,2}>0$.
If we next set $k=1$ 
in \eqref{eq:Ggg1},
then we have
\[
   a_{1,1}Z_0+(m-1)a_{0,2}Z_1=\tilde {\G}_1
\]
and
\begin{equation}\label{eq:Z1}
 Z_1=\frac{\tilde{\G}_1-a_{1,1}Z_0}{(m-1)a_{0,2}}.
\end{equation}
Hence $Z_1$ is $m$-fixed (cf.\ \eqref{eq:Z0}).
On the other hand, \eqref{eq:Ggg1} for $2\le k\le m$
can be rewritten as
\begin{equation}\label{eq:PP}
 (1 + k) (2 + k) Y_k + a_{1,1} (2 + k) Z_{k+1} + 
  a_{0,2} (-2 - k + m) Z_{k+2} = \tilde {\G}_{k+2}
\end{equation}
for $k=0,\dots,m-2$.
If we set $k=0$ in \eqref{eq:Fff1},
then we have
\begin{equation}\label{eq:X0}
 X_0+m a_{1,1}Z_0+ma_{0,2}Z_1=\tilde {\F}_0.
\end{equation}
Thus $X_0$ can be $m$-fixed.
On the other hand, \eqref{eq:Fff1} for $1\le k\le m$
can be rewritten as
\begin{equation}\label{eq:Fff2}
 X_{k+1}+
            m(k+1) Y_{k}+
            (k+1) a_{2,0} Z_{k}
	    +m a_{1,1}Z_{k+1}+ (m-k-1)a_{0,2}Z_{k+2}
	    =\tilde {\F}_{k+1},
\end{equation}
where $k=0,\dots,m-1$.
Subtracting \eqref{eq:Eee1} from \eqref{eq:Fff2},
we have
\begin{equation}\label{eq:Fff3}
 k(k+1) Y_{k}+
           a_{2,0} Z_{k}
	   +ka_{1,1}Z_{k+1}+ (m-k-1)a_{0,2}Z_{k+2}
	   =\tilde {\F}_{k+1}
	   -\tilde {\E}_k
\end{equation}
for $k=0,\dots,m-1$.
By \eqref{eq:Fff3} and 
\eqref{eq:PP}, we have
\begin{multline}\label{imp}
 Z_{k+2}\\=\frac{1}{a_{0,2} (2 m-k-2)}
 \biggl(
 -a_{2,0} (2 + k) Z_k +(k+2)(
 \tilde {\F}_{k+1}
 -\tilde {\E}_k)-k 
 \tilde {\G}_{k+2}
 \biggr)
\end{multline}
for $k=0,\dots,m-2$.
Thus $Z_2,\dots,Z_m$ are $m$-fixed.
Then $Y_0,\dots,Y_{m-1}$
are $m$-fixed by
\eqref{eq:Fff3},
and $X_1,\dots,X_{m}$
are also $m$-fixed by
\eqref{eq:Fff2}.
Finally, if we set $k=m$ in \eqref{eq:Eee1},
then we have
\begin{equation}\label{eq:Xm+1}
 X_{m+1}
  +m a_{2,0} Z_m =\tilde {\E}_m,
\end{equation}
and $X_{m+1}$ is $m$-fixed.
Since the terms 
$\tilde {\E}_k$, $\tilde {\F}_k$
and $\tilde {\G}_{k}$
as in \eqref{eq:Eee1},
\eqref{eq:Fff1} and \eqref{eq:Ggg1}
can be computed using the lower order
term of $\Xx_{m+1}$, $\Yy_{m-1}$, $\Zz_{m}$
and the informations of $\E$, 
$\F$ and $\G$,
we  get a desired $m$-th formal solution
\[
  f^m=\bigl(\Xx_{m+1},
            \Xx_{m+1}\Yy_{m-1}
             +\beta_{m+1}(\Yy_{m-1}),
	     \Zz_{m})
\]
satisfying \eqref{eq:E0}, \eqref{eq:F0} and \eqref{eq:G0}.
The uniqueness of $f^m$ is now obvious from
our construction.

\begin{remark}
 If needed, we can explicitly write down
 the lower order terms 
 $\tilde {\E}_k$, $\tilde {\F}_k$
 and $\tilde {\G}_{k}$ as in \eqref{eq:Eee1},
 \eqref{eq:Fff1} and \eqref{eq:Ggg1},
 but we omit such formulas here, as they are
 complicated.
\end{remark}

\section{New intrinsic invariants of cross caps}\label{sec:5}

Let $\W$ be the set of
germs of Whitney metrics at their singularities.
Two metric germs $d\sigma^2_i$ ($i=1,2$) in $\W$
are called \emph{isometric} if there exists a local diffeomorphism
germ $\phi$ such that $d\sigma^2_2$
is the pull-back of $d\sigma^2_1$ by $\phi$.
A map
\[
   I:\W\to \R
\]
is called an \emph{invariant of Whitney metrics} if it
takes a common value for all metrics in each isometric class.
For a cross cap singularity, we can take a canonical coordinate system
$(x,y)$ such that
$f(x,y)$
is expressed as (cf.\ \eqref{eq:cross})
\begin{align} \label{eq:fn}
 f(x,y)&=
        \left(
	   x,\,\,
	   xy+b(y),\,\,
	   z(x,y)\right), \\
 b(y)&=\sum_{i=3}^\infty \frac{b_i}{i!} y^i, \quad
 z(x,y)=\sum_{j+k\ge 2}^\infty \frac{a_{j,k}}{j!k!} x^jy^k.
\nonumber
\end{align}
As shown in \cite[Theorem 6]{HHNUY}, 
the coefficients $a_{2,0}$, $a_{1,1}$ and
$a_{0,2}$ are intrinsic invariants.
By 
 \eqref{eq:comp-E},
 \eqref{eq:comp-F}
and
 \eqref{eq:comp-G}, one can
observe that these three invariants are determined 
by the second order jets of $E,F$ and $G$.
So one might expect that the coefficients of the Taylor expansions
of the functions $E,F,G$
are all intrinsic invariants of cross caps.
However, for example,
\[
  E_{vvv}(0,0)=6 a_{1,1}a_{1,2}
\]
is not an intrinsic invariant of $f$,
since
$a_{1,2}$ is changed by an isometric deformation of cross caps
(cf.\ \cite[Theorem 4]{HHNUY}).
In this section, we construct a family of intrinsic invariants
$\{\alpha_{i,j}\}_{i+j\ge 2}$ of Whitney metrics
($\alpha_{2,0},\alpha_{1,1}$ and $\alpha_{0,2}$ have been 
already defined in \cite{HHNUY}).
When the metric is induced from a cross cap
expressed by the canonical coordinate, then
$a_{2,0}$, $a_{1,1}$ and $a_{0,2}$ 
as in \eqref{eq:fn} coincide with
$\alpha_{2,0},\alpha_{1,1}$ and $\alpha_{0,2}$
for the induced Whitney's metric.

\medskip

Let $d\sigma^2$ be a Whitney metric
defined on a $2$-manifold $M^2$, and $p\in M^2$
a singular point of the metric.
Applying Theorem \ref{thm:main} for $b=0$,
there exists a $C^\infty$ map germ $f$
into $\R^3$ defined on a neighborhood $U$ of $p$
having a cross cap singularity
at $p$ satisfying the following two properties:
\begin{enumerate}
 \item the first fundamental form $d\sigma^2_f$ of $f$
       is formally isometric (cf.\ Definition \ref{def:inf})
       to $d\sigma^2$ at $p$, 
 \item the characteristic function of $f$ 
       is a flat function at $p$, that is, the Taylor expansion at
       $p$ is the zero power series.
\end{enumerate}
If $f$ is real analytic, it is a normal cross cap 
(cf.\ Definition \ref{def:normal}).
However, we do not assume here the real analyticity of 
$d\sigma^2_f$ and $f$.
Taking the normal form of $f$, we may assume 
that $f$ is expressed as
\[
  f(x,y)=
        \left(
	   x,\,\,
	   xy,\,\,
	   z(x,y)\right),
\]
where
\[
   x=x(u,v),\qquad y=y(u,v),\qquad z=z(x,y)
\]
are smooth functions defined on a neighborhood
of $p=(0,0)$.
For each pair of integers $(i,j)$ satisfying
$i+j\ge 2$ and $i,j\ge 0$,
there exists a unique assignment
\[
   d\sigma^2 \mapsto \alpha^{d\sigma^2}_{i,j}\in \R
\]
such that
\[
   [z]=
   \sum_{n=2}^\infty \sum_{i=0}^n
      \frac{\alpha^{d\sigma^2}_{i,n-i}}{i!(n-i)!} x^i y^{n-i}.
\]
So we may regard the series 
\[
 \alpha(d\sigma^2,p):=\{\alpha^{d\sigma^2}_{i,j}\}_{i+j\ge 2,\,\, i,j\ge 0}
\]
as a family of invariants of $d\sigma^2$.
By Theorem \ref{thm:main},
we get the following assertion:

\begin{theorem}
 Let $d\sigma^2_{1}$ and $d\sigma^2_{2}$
 be Whitney metrics on $M^2$
 having a singularity at the same point
 $p\in M^2$.
 Then the two metrics are formally isometric
 if and only if
 $\alpha(d\sigma^2_{1},p)
 =\alpha(d\sigma^2_{2},p)$.
\end{theorem}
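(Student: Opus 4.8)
The plan is to deduce the theorem directly from Theorem~\ref{thm:main} by comparing, for each of the two metrics, the canonical normal-cross-cap realization constructed there with $b=0$. First I would record that the family $\alpha(d\sigma^2,p)$ is \emph{well defined}: by Theorem~\ref{thm:main} applied with $b\equiv 0$, there is a $C^\infty$ cross cap germ $f$ whose first fundamental form is formally isometric to $d\sigma^2$ and whose characteristic function is flat, and $f$ is unique modulo addition of flat functions; hence its normal form $f(x,y)=(x,xy,z(x,y))$ has a well-defined Taylor series $[z]\in\R[[x,y]]$, whose coefficients are exactly the $\alpha^{d\sigma^2}_{i,j}$. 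The uniqueness ``modulo flat functions'' is what guarantees $[z]$ does not depend on the choice of $C^\infty$ representative $f$.

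Next I would prove the ``only if'' direction. Suppose $d\sigma^2_1\approx d\sigma^2_2$. Choose the realization $f_1$ of $d\sigma^2_1$ with flat characteristic function as above. Since the two metrics are formally isometric, the first fundamental form of $f_1$ is also formally isometric to $d\sigma^2_2$, and of course the characteristic function of $f_1$ is still flat; so $f_1$ is \emph{a} map satisfying properties (1) and (2) of Theorem~\ref{thm:main} for the metric $d\sigma^2_2$ as well. By the uniqueness clause of Theorem~\ref{thm:main}, any realization $f_2$ of $d\sigma^2_2$ with flat characteristic function agrees with $f_1$ up to addition of flat functions; passing to normal forms and then to Taylor series, $[z_1]=[z_2]$, i.e.\ $\alpha(d\sigma^2_1,p)=\alpha(d\sigma^2_2,p)$. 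Here one should note that passing to the normal form only involves composing with an analytic (in fact polynomial-coordinate) change as in Fact~\ref{fact:W}, which is determined by the jets, so it respects the equivalence relation $\sim$ and does not disturb the conclusion.

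For the ``if'' direction, assume $\alpha(d\sigma^2_1,p)=\alpha(d\sigma^2_2,p)$, i.e.\ the two normal-form realizations $f_1=(x,xy,z_1)$ and $f_2=(x,xy,z_2)$ have $[z_1]=[z_2]$. Then $f_1$ and $f_2$ have the same $\infty$-jet, so they are formally isometric as cross caps (Definition~\ref{def:inf}), since their first fundamental forms are obtained by the same universal polynomial formulas \eqref{eq:E}--\eqref{eq:G} (with $b'\equiv 0$) applied to identical jets. But by construction $f_i$ is formally isometric to $d\sigma^2_i$, so $d\sigma^2_1\approx f_1^*ds^2_{\R^3}\approx f_2^*ds^2_{\R^3}\approx d\sigma^2_2$, using that $\approx$ is an equivalence relation on metric germs. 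This completes both directions.

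The only genuinely delicate point is the interaction between ``uniqueness modulo flat functions'' (which is what Theorem~\ref{thm:main} provides) and the passage to the \emph{normal form} $(x,xy,z(x,y))$: one must check that the local diffeomorphism bringing $f_i$ to its canonical coordinate system, and hence the resulting $z_i$, depends only on the $\infty$-jet of $f_i$. This follows from the algorithmic construction of the normal form in Fukui--Hasegawa \cite[Proposition~2.1]{FH} (cf.\ also the uniqueness proof of Fact~\ref{fact:W} reproduced above), where every coefficient of the coordinate change and of $z$ is a universal polynomial in the derivatives of $f_i$ at $p$; adding a flat perturbation to $f_i$ therefore changes $z_i$ only by a flat function, leaving $[z_i]$ intact. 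Granting this, the theorem is an essentially formal corollary of Theorem~\ref{thm:main}.
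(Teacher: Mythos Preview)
Your proposal is correct and follows exactly the route the paper intends: the paper does not give an explicit proof but simply states that the theorem follows from Theorem~\ref{thm:main}, and your argument spells out precisely that deduction (well-definedness of $\alpha$ from the uniqueness clause, the ``only if'' direction by observing that a realization of $d\sigma^2_1$ with flat $b$ also realizes $d\sigma^2_2$ and invoking uniqueness, and the ``if'' direction by computing the first fundamental form from identical jets). Your final paragraph on why passing to the normal form respects the equivalence $\sim$ is a useful clarification of a point the paper leaves implicit.
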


In other words, $\alpha$ is a family of complete invariants
distinguishing the formal isometry classes
of Whitney metrics at $p$.
This family of invariants also induces a family of
intrinsic invariants for cross caps in an
arbitrarily given  Riemannian
3-manifold $(N^3,g)$ as follows.
Let $f:M^2\to N^3$ be a $C^\infty$ map which admits 
only cross cap singularities.
Then the induced metric $d\sigma^2_f$ gives
a Whitney metric.
Let $p\in M^2$ be a cross cap singularity of $f$.
Then we set 
\[
  A(f,p):=\alpha(d\sigma^2_f,p),
\]
which can be considered as 
a family of intrinsic invariants
of a germs of cross cap singularities.
When $(N^3,g)$ is the Euclidean $3$-space, 
we can give an explicit algorithm to compute
the invariants as follows:
\begingroup
\renewcommand{\theenumi}{\arabic{enumi}}
\renewcommand{\labelenumi}{\arabic{enumi}.}
\begin{enumerate}
 \item Take the $(m+1)$-st ($m\ge 2$) 
       canonical coordinate system $(u,v)$ centered at $p$,
       that is, $f$ has the following Taylor expansion at $p=(0,0)$:
       \[
         [f]=\left(u, uv+\sum_{n=3}^{m+1} \frac{b_nv^n}{n!} ,
              \sum_{n=2}^{m+1} \sum_{i=0}^n
              \frac{a_{i,n-i}}{i!(n-i)!} u^i v^{n-i}\right)
              +O_{m+2}(u,v).
       \]
       Such a coordinate system can be taken using Fukui-Hasegawa's
       algorithm given in \cite{FH}.
 \item Using this coordinate system $(u,v)$,
       we can determine the coefficients of the following expansion
       up to $(m+1)$-st order terms
       because of the expression $f=(u,0,0)+O_2(u,v)$:
 \begin{align*}
  [E]&=\sum_{i+j\le m+1}\frac{E(i,j)}{i!j!}u^iv^j
  +O_{m+2}(u,v),\\
  [F]&=\sum_{i+j\le m+1}\frac{F(i,j)}{i!j!}u^iv^j
  +O_{m+2}(u,v),\\
  [G]&=\sum_{i+j\le m+1}\frac{G(i,j)}{i!j!}u^iv^j
  +O_{m+2}(u,v),
 \end{align*}
       where $d\sigma^2_f=E\,du^2+2F\,du\,dv+G\,dv^2$.
 \item Setting $b=0$, we compute 
       \begin{align*}
	&X(k,l)\qquad (0\le k+l\le  m+2), \\
	&Y(k,l)\qquad (0\le  k+l\le  m), \\
	&Z(k,l)\qquad (0\le  k+l\le m+1),
       \end{align*}
       according to the algorithm given in the proof of
       Theorem \ref{thm:main}.
 \item We formally set
       \begin{align*}
	u&:=\sum_{i+j\le m}\frac{U(i,j)}{i!j!}x^i y^j+O_{m+1}(x,y),\\
	v&:=\sum_{i+j\le m}\frac{V(i,j)}{i!j!}x^i y^j+O_{m+1}(x,y),
       \end{align*}
       and substitute them into the 
       expansions
 \begin{align*}
  x&=\sum_{i+j\le m}\frac{X(i,j)}{i!j!}u^i v^j+O_{m+1}(u,v),\\
  y&=\sum_{i+j\le m}\frac{Y(i,j)}{i!j!}u^i v^j+O_{m+1}(u,v).
 \end{align*}
       Then we can determine all of the coefficients
       \[
         U(k,l),\quad V(k,l)\qquad (0\le  k+l\le  m).
       \]
 \item Using them, we can finally determine all
       of the coefficients of the expansion
       \begin{equation}\label{eq:finZ}
	[Z]=\sum_{i+j\le m}\frac{A_{i,j}}{i!j!}x^iy^j
	 +O_{m+1}(x,y),
       \end{equation}
       where $\{A_{i,j}\}_{i+j\ge 2}=A(f,p)$.
\end{enumerate}
\endgroup%
However, the 
uniqueness of the expression
\eqref{eq:finZ} was already shown, and
one can alternatively compute 
$\{A_{i,j}\}_{i+j\le m}$
via any suitable method.
We remark that the normal cross cap
shown in the right-hand side of Figure \ref{fig:example}
is drawn using the invariants
$A_{i,j}$ for $0\le i+j\le 11$.

One can get the following tables
of intrinsic invariants;
\begin{align*}
 &A_{2,0}=a_{2,0},
 \qquad A_{1,1}=a_{1,1},\qquad A_{0,2}=a_{0,2}, \\
 & A_{3,0}=
 -\frac{b_3 a_{1,1}^2 a_{2,0}+b_3 a_{2,0}-2 a_{3,0} a_{0,2}^2}{2 a_{0,2}^2},\quad
 A_{2,1}=-\frac{b_3 a_{1,1} a_{2,0}-6 a_{0,2} a_{2,1}}{6 a_{0,2}},\\
 &
 A_{1,2}=-\frac{-b_3 a_{1,1}^2-2 a_{0,2} a_{1,2}-b_3}{2 a_{0,2}},\quad
 A_{0,3}=
 \frac{3 b_3 a_{1,1}+2 a_{0,3}}{2}.
\end{align*}
The numerators of the above invariants have been
computed in \cite{HHNUY}.
The authors also computed
the fourth order invariants $A_{i,j}$ ($i+j=4$), 
which are more complicated.
For example, $A_{0,4}$ has the simplest expression amongst them, which is
given by
{\small 
\[
A_{0,4}=
\frac{4 a_{0,2} \left(4 b_4 a_{1,1}+3 a_{0,4}\right)+3 b_3 \left(b_3 \left(15 a_{1,1}^2-4 a_{0,2} a_{2,0}+7\right)+4 \left(a_{0,3} a_{1,1}+4 a_{0,2} a_{1,2}\right)\right)}{12 a_{0,2}}.
\]}

\begin{ack}
The authors thank Toshizumi Fukui
for his valuable comments. 
\end{ack}


\end{document}